\documentclass[12pt,a4paper]{amsart}
\pdfoutput=1

\usepackage{latexsym,amsmath,amssymb,amscd}
\usepackage[all]{xy}
\usepackage{graphicx}
\usepackage{appendix}
\usepackage{eucal}

\newtheorem{theorem}{Theorem}[section]
\newtheorem{corollary}[theorem]{Corollary}
\newtheorem{lemma}[theorem]{Lemma}
\newtheorem{proposition}[theorem]{Proposition}

\theoremstyle{definition}
\newtheorem{definition}[theorem]{Definition}

\theoremstyle{remark}
\newtheorem{remark}[theorem]{Remark}
\newtheorem{example}[theorem]{Example}

\theoremstyle{plain}

\newtheorem*{maintheorem}{The Extension Theorem}

\newcommand{\calb}{\mathfrak{B}}
\newcommand{\cale}{\mathcal{E}}
\newcommand{\calo}{\mathcal{O}}
\newcommand{\calt}{\mathcal{T}}

\newcommand{\norm}[1]{\| #1 \| }

\newcommand{\C}{\mathbb{C}}
\newcommand{\N}{\mathbb{N}}
\newcommand{\Z}{\mathbb{Z}}
\newcommand{\A}{\mathcal{A}}
\newcommand{\B}{\mathcal{B}}
\newcommand{\K}{\mathcal{K}}

\newcommand{\eb}{{\cale_B}}
\newcommand{\ebp}{{\cale_{B'}}}
\newcommand{\ebs}{{\cale_{B^*}}}
\newcommand{\ebc}{{\cale_{B^c}}}
\newcommand{\etb}{{\cale_{tB}}}
\newcommand{\ehgam}{{{}_H\cale_\Gamma}}
\newcommand{\esgam}{{{}_S\cale_\Gamma}}
\newcommand{\eggam}{{{}_G\cale_\Gamma}}
\newcommand{\estimesb}{\crho(S)\otimesh \eb}
\newcommand{\estimesbs}{\crho(S)\otimesh \ebs}
\newcommand{\egtimesb}{\crho(G)\otimesh \ebp}
\newcommand{\egtimesbbp}{\crho(G)\otimesh(\eb\oplus\ebp)}
\newcommand{\ehnn}{(\crho(G)\otimesh\eb)\oplus\eggam}

\newcommand{\dispestimesb}{\crho(S)\dispotimesh \eb}
\newcommand{\dispegtimesbbp}{\crho(G)\dispotimesh(\eb\oplus\ebp)}
\newcommand{\dispehnn}{(\crho(G)\dispotimesh\eb)\oplus\eggam}

\newcommand{\otimesh}{{\,\strut_{\scriptscriptstyle H}\hspace{-0.6ex}\mathop{\otimes}\,\,}}

\newcommand{\otimesg}{{\,\strut_{\scriptscriptstyle G}\hspace{-0.6ex}\mathop{\otimes}\,\,}}
\newcommand{\otimess}{{\,\strut_{\scriptscriptstyle S}\hspace{-0.6ex}\mathop{\otimes}\,\,}}
%
\let\dispotimesh=\otimesh
\let\dispotimesk=\otimesk
\let\dispotimesg=\otimesg
\newcommand{\olddispotimesh}{\!\mathop{\otimes}\limits_{\scriptscriptstyle\crho(H)}\!}
\newcommand{\dispotimesb}{\!\mathop{\otimes}\limits_{\scriptscriptstyle\crho(B)}\!}
\newcommand{\fre}{\mathfrak{E}}
\newcommand{\EH}[1]{\fre_{H#1}}
\newcommand{\Ep}{\fre^p}
\newcommand{\Epc}{\fre^{1-p}}
\newcommand{\EpH}[1]{\fre_{H#1}^p}
\newcommand{\EpcH}[1]{\fre_{H#1}^{1-p}}
\newcommand{\op}{\mathrm{op}}
\newcommand{\siggam}{\sigma^\Gamma}
\newcommand{\sigb}{\sigma^B}
\newcommand{\tim}{\tau}
\newcommand{\wbar}{\overline{w}}
\newcommand{\wpbar}{\overline{w'}}
\newcommand{\syl}{a}
\newcommand{\gen}{\overline{\syl}}
\newcommand{\tb}{{\calt_B}}
\newcommand{\symdiff}{{\scriptstyle\triangle}}
\newcommand{\crho}{{C^*_\rho}}
\newcommand{\tree}{\mathfrak{T}}
\newcommand{\gammal}{\Gamma_L}
\newcommand{\gammar}{\Gamma_R}
\newcommand{\ph}{p_H}
\newcommand{\phb}{p_{\!Hb\,}}
\newcommand{\ip}[1]{\left\langle#1\right\rangle}

\usepackage{eucal}
\usepackage{euler}
\usepackage{times}

\linespread{1.2}


\author{Jacek Brodzki}
\email{J.Brodzki@soton.ac.uk}

\author{Graham A. Niblo}
\email{G.A.Niblo@soton.ac.uk}

\author{Nick Wright}
\email{N.J.Wright@soton.ac.uk}
\address{ Mathematical Sciences, University of Southampton, Southampton, SO17~1BJ, England}

\subjclass[2000]{Primary 46L80; Secondary 46L85, 20F65, 19K35}

\title{$K$-theory and exact sequences of partial translation algebras}

\begin{document}
\maketitle

\begin{abstract}
In an earlier paper, the authors introduced partial translation algebras as a generalisation of group $C^*$-algebras. Here we establish an extension of partial translation algebras, which may be viewed as an excision theorem in this context. We apply this general framework to compute the $K$-theory of partial translation algebras and group $C^*$-algebras in the context of almost invariant subspaces of discrete groups. This generalises the work of Cuntz, Lance, Pimsner and Voiculescu. In particular we provide a new perspective on Pimsner's calculation of the $K$-theory for a graph product of groups.
\end{abstract}

\section{Introduction}

Partial translation algebras were introduced in \cite{BNW:partial-translations} as a generalisation of group $C^*$-algebras. In \cite{BNW:Trees} we demonstrated that a number of classical $C^*$-algebra extensions, including the Toeplitz and Cuntz extensions, arise naturally in this context. In this paper we develop a general framework  for constructing exact sequences of partial translation algebras that encompasses these and other examples, including the celebrated Pimsner-Voiculescu exact sequence. We use this to give a number of new computations of the $K$-theory for group $C^*$-algebras and in particular we provide a new perspective on Pimsner's calculation of the $K$-theory for a graph product of groups \cite{Pimsner}.

A \emph{partial translation} of a discrete metric space $B$ is a bijection $t$ whose domain and codomain are subsets of $B$, and with $d(x,t(x))$ a bounded function on the domain of $t$. Throughout this paper the spaces we consider will be subspaces of discrete groups. Let $\Gamma$ be a countable discrete group equipped with a left-invariant metric. A subspace $B$ of $\Gamma$ is naturally endowed with a canonical collection of partial translations $\{t^B_g \mid g\in \Gamma\}$ where
$$t^B_g\colon B\cap Bg \to Bg^{-1}\cap B,\qquad t^B_g(x)=xg^{-1}.$$
These partial translations of $B$ define partial isometries of $\ell^2(B)$ as follows. Let $\{\delta_x \mid x\in B\}$ denote the standard basis of $\ell^2(B)$. We define $T^B_g:\ell^2(B)\to \ell^2(B)$ by $T^B_g\delta_x=\delta_{t^B_g(x)}$ if $x\in B\cap Bg$, and $T^B_g\delta_x=0$ otherwise. By analogy with the right regular representation of a group, we denote the representation of the partial translations by $\rho(t^B_g)=T^B_g$. 
The \emph{partial translation algebra of $B$} is the 
 $C^*$-algebra generated by these partial isometries and is denoted $\crho(B)$. We will refer to the dense subalgebra of $\crho(B)$ generated algebraically by $T^B_g$, $g\in \Gamma$, as the \emph{translation ring}.

Note that in the case where $B=\Gamma$, the operator $\rho(t^\Gamma_g)$ is  the unitary on $\ell^2(\Gamma)$ given by right multiplication by $g^{-1}$; abridging notation we denote this by $\rho(g)$. Hence in this case $\rho$ is the usual right regular representation of $\Gamma$, the translation ring is the group ring of $\Gamma$, and
the translation algebra is the (right) reduced group $C^*$-algebra $\crho(\Gamma)$.

The structure of the translation algebra $C^*_\rho(B)$ is delicate and is unfortunately not functorial even with regards to inclusions between subspaces. Despite this inconvenient fact, we can characterise when an inclusion does induce an extension of translation algebras in terms of the following (coarse) geometric condition. 

\begin{definition} Let $B\subset X\subset \Gamma$ with $K$ the left stabiliser of $X$. 
We say that $B$ is  \emph{relatively deep in $X$} if for every coset $Kx$ contained in $X$, there exist points in $B\cap Kx$ which are arbitrarily far from the complement of $B$ in $X$. 
\end{definition}

The reader may find it illustrative to consider the case $X=\Gamma$ when the condition reduces to the statement that the complement of $B$ is not coarsely dense in $\Gamma$. In this case we simply say that $B$ is \emph{deep} in $\Gamma$.

The ideal arising from the extension is in general difficult to compute, however, when $B$ is ``relatively almost invariant'', as defined below, we can carry out the computation by exploiting the fact that the translation algebra $C^*_\rho(B)$ carries a natural representation on a $C^*_\rho(H)$-Hilbert module $\cale_B$, where $H$ is the left stabiliser of $B$. This module encodes the symmetries of $B$ inherited from $\Gamma$.

\begin{definition}\label{relative almost invariance}Let $B\subset X\subset \Gamma$ with $H$ the left stabiliser of $B$.The subset $B$ is \emph{relatively $H$-almost invariant in $X$} if it satisfies both of the following conditions:

\begin{enumerate}\label{relative-H-almost-invariant}
\item For all $g\in \Gamma$, there exists a finite subset $F\subset \Gamma$ such that \[(Bg\setminus B)\cap X\subseteq HF.\]
\item $B$ is co-separable in $\Gamma$, i.e., there exists a finite subset $F'\subset \Gamma$ such that $F'\cap (B\symdiff gB)=\emptyset$ if and only if $g\in H$.
\end{enumerate}
\end{definition}

While the conditions may seem contrived at first sight, in the case when $X=\Gamma$ the statement that $B$ is a relatively deep, relatively $H$-almost invariant subset of $\Gamma$  reduces to the more familiar notion that $B$ is a proper $H$-almost invariant subset of $\Gamma$. In particular in this case the second condition in definition \ref{relative almost invariance} follows from the first (see lemma \ref{ai_coseparable}). Almost invariant subsets arise naturally in low dimensional topology and geometric group theory, were first exploited by Stallings in his celebrated ends theorem, and, following Sageev's duality theorem, have been extensively studied in the context of group actions on CAT(0) cube complexes, \cite{Stallings, Scott, Sageev}.

We can now state our main result.

\begin{maintheorem}
Let $B$ and $X$ be subspaces of $\Gamma$ such that  $B\subset X\subset \Gamma$. Let also $H$ be the left stabiliser of $B$, $K$ the left stabiliser of $X$ and assume that $H\leq K$. 

 If   $B$ is relatively deep in $X$ then  there exists a short exact sequence of $C^*$-algebras:
\[
\begin{CD}
0 @>>> I(P,B) @>>> \crho(B) @>>>\crho(X)
@>>>0,\\
\end{CD}
\]
where the map $\crho(B) \to \crho(X)$ extends the assignment $T^B_g\mapsto T^X_g$, for all $g\in \Gamma$. 

Moreover, if $B$ is relatively $H$-almost invariant in $X$ then the ideal $I(P,B)$ is the algebra of compact operators on a suitable Hilbert module $\eb$ over the algebra $\crho(H)$. In particular the ideal is Morita equivalent to $\crho(H)$.
\end{maintheorem}

The short exact sequence in the extension theorem extends joint work with Putwain which appeared in \cite{Putwain}.

The extension theorem may be viewed as an excision theorem and is a  unification and generalisation of the following well known results:

\begin{enumerate}
\item \emph{The Toeplitz extension}\label{Toeplitz}: Applying the above Theorem with $B=\N, X=\Gamma=\Z$ we obtain the Toeplitz extension
\[
0\to \K \to \calt \to C(S^1) \to 0
\]
using the Fourier isomorphism $\crho(\Z) \cong C(S^1)$. 
\item  \emph{The Pimsner-Voiculescu sequence}:\label{Pimsner-Voiculescu}
Let $\Gamma = X=F_n$, the free group on $n$ generators $s_1, \dots, s_n$, and let $B$ be the subspace consisting of all reduced words not beginning with $s^{-1}_1$. We obtain the generalised Toeplitz extension introduced by Pimnser and Voiculescu in \cite{PV}:
\[
0 \to \mathcal K\to  \crho(B) \to \crho(F_n) \to 0. 
\]
 \item \emph{The Cuntz extension}:\label{Cuntz}
Let $B$ be the subspace of $F_n$ which consists of positive words in the generators $s_1, \dots, s_n$ together with the identity. Let $X$ be the union of the translates $s_1^kB$,  for all $k\in \Z$. The left stabiliser of $X$ is the group $H = \langle s_1\rangle $ and the left stabiliser of $B$ is trivial. We obtain an extension
\[
0 \to \mathcal K\to \crho(B) \to \crho(X)\to 0
\]
The middle and the right hand terms in this sequence are identified in  \cite[Theorem 4]{BNW:Trees} as the Cuntz algebra $\cale_n$ and the Cuntz algebra $\calo_n$, respectively. This way we obtain the Cuntz extension introduced in his computation of the $K$-theory of the algebra $\calo_n$ \cite{Cuntz:Annals}. 

\item \emph{Lance's Extension}:\label{Lance}
Let $\Gamma=G\mathop{*}S$ be a free product of countable discrete groups $G,S$ and let $B$ consist of all those elements $w\in \Gamma$ which are represented by reduced words not starting with a non-trivial syllable from $S$.
We obtain the short exact sequence constructed by Lance in \cite{Lance}.
\end{enumerate}

Lance's work was extended in the context of certain amalgamated free products by Natsume \cite{Natsume} and certain HNN-extensions by Anderson and Paschke \cite{Anderson-Paschke}.

Applying the extension theorem to the case when $\Gamma$ acts on a tree $\tree$ with no global fixed point allows us to unify all of these results. We conclude the paper by giving an explicit computation of the $K$-theory of the associated translation algebra $\crho(B)$ to obtain an alternative proof of Pimsner's celebrated result, \cite{Pimsner}.

Following Sageev's characterisation of almost invariant subsets of a group \cite{Sageev} as objects dual to non-trivial group actions on CAT(0) cube complexes, we obtain a potentially rich source of new algebra extensions arising from geometric group theory and low dimensional topology.

This research was supported in part by EPSRC grants EP/F031947/1 and EP/J015806/1.

\section{Partial translation algebras for deep subsets}\label{exact sequences}

We begin by considering the following question. If $B$ is a subspace of a discrete group $\Gamma$ then under what conditions does the assignment $T^B_g \mapsto \rho(g)$ induce a $*$-homomorphism $\crho(B)\to \crho(\Gamma)$? More generally, if $X,B$ are subspaces of $\Gamma$ with $B\subseteq X$, then under what conditions does $T^B_g\mapsto T^X_g$ induce a $*$-homomorphism $\crho(B)\to \crho(X)$?

In the case of the inclusion of $B$ into $\Gamma$ there is an obvious necessary condition: a product $T^B_{g_1}T^B_{g_2}\dots T^B_{g_n}$ in $\crho(X)$ must map to $\rho(g_1g_2\dots g_n)$ in $\crho(\Gamma)$, hence in particular such products must be non-zero. We will show that the condition that the monoid of products $T^B_{g_1}T^B_{g_2}\dots T^B_{g_l}$ has no zero is equivalent to a geometric condition on the subspace $B$, and that this is also a sufficient condition for the map $\crho(B)\to \crho(\Gamma)$ to exist.

\begin{proposition}
\label{notcoarselydense}

Let $\Gamma$ be a discrete group and $B \subseteq \Gamma$ a subset of $\Gamma$. The following are equivalent: 
\begin{enumerate}
\item $B$ is a deep subspace of $\Gamma$; 
\item For all $r>0$ there exists $x\in B$ such that $B_\Gamma(x,r)\subset B$;
\item \label{operatormonoid} the monoid $\{T^B_{g_1}T^B_{g_2}\dots T^B_{g_k} \mid g_i\in \Gamma\}$ has no zero. 
\end{enumerate}
\end{proposition}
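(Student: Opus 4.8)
The plan is to treat $(1)\Leftrightarrow(2)$ as a direct unwinding of definitions and to put the real work into $(2)\Leftrightarrow(3)$. For $(1)\Leftrightarrow(2)$ I would specialise the definition of ``relatively deep'' to the case $X=\Gamma$: here the left stabiliser $K$ of $X$ is all of $\Gamma$, so the only coset $Kx$ is $\Gamma$ itself, and the requirement that $B\cap Kx$ contain points arbitrarily far from the complement of $B$ in $X$ becomes exactly the statement that for every $r>0$ there is $x\in B$ with $d(x,\Gamma\setminus B)>r$, i.e. $B_\Gamma(x,r)\subseteq B$. This is condition $(2)$ verbatim. Throughout I assume, as is standard in this coarse-geometric setting (and as is needed for balls to be finite), that the metric on $\Gamma$ is proper, so that $\{g : d(e,g)\le r\}$ is finite; this finiteness is used only in the hard direction below.

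The key preliminary computation is to record how a product of generators acts on a basis vector. A short induction shows that
\[
T^B_{g_1}T^B_{g_2}\cdots T^B_{g_k}\,\delta_x=\delta_{x(g_1g_2\cdots g_k)^{-1}}
\]
whenever all of the partial products $x,\ xg_k^{-1},\ xg_k^{-1}g_{k-1}^{-1},\dots,\ x(g_1\cdots g_k)^{-1}$ lie in $B$, and the product annihilates $\delta_x$ otherwise. Thus $T^B_{g_1}\cdots T^B_{g_k}$ is a nonzero operator precisely when there is a starting point $x\in B$ whose associated ``path'' of $k+1$ points stays inside $B$. By left-invariance each step moves a point by $d(e,g_i^{-1})=d(e,g_i)$, so every point of the path lies within $R:=\sum_{i=1}^k d(e,g_i)$ of $x$. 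For $(2)\Rightarrow(3)$ this bound finishes the argument at once: given any product, apply $(2)$ with $r=R$ to obtain $x\in B$ with $B_\Gamma(x,R)\subseteq B$; then the whole path starting at $x$ stays in $B_\Gamma(x,R)\subseteq B$, so the product does not annihilate $\delta_x$ and is therefore nonzero. Hence the monoid has no zero.

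The substantive direction is $(3)\Rightarrow(2)$, which I would prove by contraposition and which I expect to be the main obstacle. Suppose $(2)$ fails, so there is $r>0$ with $d(x,\Gamma\setminus B)\le r$ for every $x\in B$. The crucial observation is that the monoid is closed under adjoints, since a direct check gives $(T^B_g)^*=T^B_{g^{-1}}$; consequently each domain projection $Q_a:=T^B_{a^{-1}}T^B_a=(T^B_a)^*T^B_a$ lies in the monoid. Each $Q_a$ is the projection of $\ell^2(B)$ onto $\ell^2(B\cap Ba)$, and being diagonal in the standard basis these projections commute, so for any finite set $F$ the product $\prod_{a\in F}Q_a$ is the projection onto $\ell^2\big(\bigcap_{a\in F}(B\cap Ba)\big)$. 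Now take $F=\{a\in\Gamma : 0<d(e,a)\le r\}$, which is finite by properness. For each $x\in B$ there is $y\notin B$ with $d(x,y)\le r$; setting $a=y^{-1}x$ gives $a\in F$ and $xa^{-1}=y\notin B$, i.e. $x\notin Ba$. Hence $\bigcap_{a\in F}(B\cap Ba)=\emptyset$, so $\prod_{a\in F}Q_a=0$ is a zero in the monoid, contradicting $(3)$.

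The main difficulty is precisely this last step: recognising that rather than analysing arbitrary products of the generators directly, one should use the adjoints $T^B_{a^{-1}}$ to assemble the commuting family of domain projections $Q_a$, and then seeing that the failure of depth translates into the purely combinatorial statement that finitely many of the domains $B\cap Ba$ have empty common intersection. Once this reformulation is in place the computation is routine.
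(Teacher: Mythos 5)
Your proof is correct and is essentially the paper's argument: your $(2)\Rightarrow(3)$ is the paper's (with an explicit radius bound in place of ``the track $F$ is finite, so lies in some ball''), and in $(3)\Rightarrow(2)$ your product of domain projections $\prod_{a\in F}Q_a$ with $F=\{a : 0<d(e,a)\le r\}$ is precisely the paper's track operator $T^B_{e,B_\Gamma(e,r)}$, just run in contrapositive form. If anything, your version is slightly more careful, since it exhibits an explicit sequence of generators realising the track $(e,B_\Gamma(e,r))$ and flags the properness of the metric, both of which the paper leaves implicit.
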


Before proving this proposition we introduce some terminology. Compositions of operators $T^B_{g_1}T^B_{g_2}\dots T^B_{g_k}$, as appearing in part \ref{operatormonoid} of Proposition \ref{notcoarselydense}, correspond to successive applications of the group elements $g_k^{-1},g_{k-1}^{-1}$, etc. To encode this algebraically we define the \emph{track} of a sequence of group elements $(g_1, g_2, \dots,g_k)$ to be the pair $(g,F)$ where $F$ is the set $F=\{e,g_k,g_{k-1}g_k,\dots, g_1g_2\dots g_k\}$ of compositions and $g=g_1g_2\dots g_k$. Intuitively the track records which points are visited along a path, but not the order in which they are visited. With the composition defined below, the tracks form a monoid: up to a change of conventions, this is the Birget-Rhodes expansion of the group, first defined in \cite{Birget-Rhodes}.

We note that the operator $T^B_{g_1}T^B_{g_2}\dots T^B_{g_k}$ is determined by the track of $(g_1, g_2, \dots,g_k)$, and in particular two operators with the same track are equal. Specifically, if $(g,F)$ is the track of $(g_1, g_2, \dots,g_k)$ then for $x\in B$ we have
$$T^B_{g_1}T^B_{g_2}\dots T^B_{g_k}\delta_x=\begin{cases} \delta_{xg^{-1}}& \text{ if } xh^{-1}\in B \text{ for all }h\in F\\0&\text{ otherwise.}\end{cases}$$
As the track determines the operator we can denote by $T^B_{g,F}$ the translation operator for any sequence $(g_1,\dots,g_k)$ with this track. In Appendix \ref{universal} we examine spaces for which translation operators $T^B_{g,F}$\,, $T^B_{g',F'}$ are equal if and only if the tracks $(g,F), (g',F')$ are the same.

The composition of tracks is defined by $(g,F)\cdot(g',F')=(gg',Fg'\cup F')$. It is easy to check that this is associative and $(e,\{e\})$ is a global identity, hence the tracks form a monoid associated to the group. The composition of tracks is defined in such a way that it corresponds to the composition of translation operators.

\begin{proof}[Proof of Proposition \ref{notcoarselydense}]
It is clear that $(1)$ and $(2)$ are  equivalent. 

$(2)\,\implies\,(3)$: Let $(g,F)$ be the track of $(g_1,\dots,g_k)$. As $F$ is finite it lies in $B_\Gamma(e,r)$ for some $r$. By assumption there exists $x\in B$ such that $B_\Gamma(x,r)\subset B$. For all $h\in F$ we have $|h|\leq r$ so $xh^{-1}\in B_\Gamma(x,r)\subseteq B$. It follows that $T^B_{g_1}T^B_{g_2}\dots T^B_{g_k}\delta_x\neq 0$ so $T^B_{g_1}T^B_{g_2}\dots T^B_{g_k}\neq 0$.

$(3)\,\implies\,(2)$: Let $(g_1,\dots,g_k)$ be a sequence with track $(e,B_\Gamma(e,r))$. By assumption $T^B_{g_1}T^B_{g_2}\dots T^B_{g_k}\neq 0$, so in particular there exists $x\in B$ such that $T^B_{g_1}T^B_{g_2}\dots T^B_{g_k}\delta_x\neq 0$. Hence $xh^{-1}\in B$ for all $h\in F=B_\Gamma(e,r)$, i.e.\ $B_\Gamma(x,r)\subseteq B$.
\end{proof}

We are now in a position to construct a $C^*$-algebra extension associated with a pair of subspaces $B\subset X$ of a discrete group $\Gamma$. We  first fix some notation. Let $B^c= X\setminus B$ denote the complement of $B$ in $X$, and let $P$ be the projection of $\ell^2(X)$ onto the subspace $\ell^2(B^c)$. Let $\A$ be the $C^*$-algebra of operators on $\ell^2(X)$ generated by $\crho(X)$ and  the projection $P$.

\medskip

Note that in addition to $\crho(X)$, the algebra $\A$ also contains $\crho(B)$: this is the subalgebra of $\A$ generated by the operators $T^B_g=(1-P)T^X_g(1-P)$, for $g\in \Gamma$. We denote by $I(P)$ (resp.\ $I(1-P)$) the ideal generated by $P$ (resp.\ $1-P$) in the algebra $\mathcal{A}$, and denote by $I(P,B)$ the intersection of $I(P)$ with $\crho(B)$.

The following theorem generalises Theorem 8.3 of Putwain's thesis \cite{Putwain}.

\begin{theorem}\label{BNWsequence}
Let $X$ and $B$ be subspaces of a countable discrete group $\Gamma$, with $B\subset X$. If $B$ is relatively deep in $X$ then there exists a commutative ladder of short exact sequences:
\[
\begin{CD}
0 @>>> I(P,B) @>>> \crho(B) @>>>\crho(X)
@>>>0\\
@. @VVV @VVV @|\\
0 @>>> I(P)\cap I(1-P) @>>> I(1-P) @>>>\crho(X)
@>>>0.
\end{CD}
\]
The map $\crho(B) \to \crho(X)$ extends the assignment $T^B_g\mapsto T^X_g$, for all $g\in \Gamma$. 
\end{theorem}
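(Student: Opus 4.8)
The plan is to realise the quotient map $\crho(B)\to\crho(X)$ as the restriction to $\crho(B)$ of the canonical quotient $\pi\colon\A\to\A/I(P)$. Since $P\in I(P)$ we have $\pi(P)=0$, and because $\A$ is unital with unit the identity operator $T^X_e\in\crho(X)$, this gives $\pi(1-P)=1$ and $\pi(T^B_g)=\pi\big((1-P)T^X_g(1-P)\big)=\pi(T^X_g)$. In this way the whole theorem collapses onto a single analytic point: that $\pi$ is injective on $\crho(X)$, equivalently $\crho(X)\cap I(P)=0$. Once this is in hand, $\pi|_{\crho(X)}$ is a $*$-isomorphism of $\crho(X)$ onto $\A/I(P)$ (surjective because $\A$ is generated by $\crho(X)$ and $P$ while $\pi(P)=0$), and both rows of the ladder will arise by restricting $\pi$ to $\crho(B)$ and to $I(1-P)$ respectively.

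The first step is a propagation estimate showing that elements of $I(P)$ have matrix coefficients vanishing far from $B^c$. Writing $S_{x,y}=\langle\delta_x,S\delta_y\rangle$, a finite sum $\sum_i a_iPb_i$ from the algebraic ideal generated by $P$ has $(x,y)$-coefficient $\sum_i\sum_{z\in B^c}(a_i)_{x,z}(b_i)_{z,y}$; if each $a_i$ has propagation at most $R$ (which holds on the dense translation ring, where $T^X_g$ has propagation $|g|$), the insertion of $P$ forces an intermediate index in $B^c$ and hence $d(x,B^c)\le R$. Passing to norm limits and using $|S_{x,y}|\le\norm{S}$, for every $c\in I(P)$ and every $\varepsilon>0$ there is an $R$ with $|c_{x,y}|<\varepsilon$ whenever $d(x,B^c)>R$.

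The crux, and the only place the hypothesis enters, is to combine this decay with relative deepness. Left multiplication by any $k$ in the stabiliser $K$ of $X$ commutes with each $T^X_g$, hence with all of $\crho(X)$, so the coefficients of any $T\in\crho(X)$ are $K$-invariant: $T_{kx,ky}=T_{x,y}$. Suppose $T\in\crho(X)\cap I(P)$ is nonzero and choose $x_0,y_0$ with $|T_{x_0,y_0}|=\delta>0$. The coset $Kx_0$ lies in $X$, so relative deepness supplies, for any prescribed $R$, an element $k\in K$ with $kx_0\in B$ and $d(kx_0,B^c)>R$. Then $|T_{kx_0,ky_0}|=\delta$ by $K$-invariance, contradicting the decay estimate once $R$ is taken large for $\varepsilon=\delta/2$. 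Hence $T=0$. I expect this interplay between the invariance of translation coefficients and the geometric deepness condition to be the main obstacle; the propagation bookkeeping underlying the decay estimate is the other point needing care.

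It remains to assemble the ladder, which is now formal. For the top row, the composite $\crho(B)\hookrightarrow\A\xrightarrow{\ \pi\ }\A/I(P)\xleftarrow{\ \cong\ }\crho(X)$ sends $T^B_g\mapsto T^X_g$; it is surjective because its image is generated by the $T^X_g$, and its kernel is $\crho(B)\cap I(P)=I(P,B)$ by definition. For the bottom row, $T^B_g=(1-P)T^X_g(1-P)\in I(1-P)$ gives $\crho(B)\subseteq I(1-P)$, which is the middle vertical inclusion; the restriction of $\pi$ to $I(1-P)$ is onto $\A/I(P)$ since $\pi(1-P)=1$ already lies in its image, and its kernel is $I(1-P)\cap I(P)$. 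The left vertical map is then the induced inclusion $I(P,B)\hookrightarrow I(P)\cap I(1-P)$, and the right-hand square commutes because both ways round are simply $\pi$ restricted to $\crho(B)$.
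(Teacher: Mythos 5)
Your proposal is correct and follows essentially the same route as the paper: both reduce the whole theorem to the single analytic fact that $\crho(X)\cap I(P)=0$, proved by approximating elements of $I(P)$ by finite-propagation operators supported near $B^c$ and playing this off against invariance under left multiplication by the stabiliser of $X$ together with relative deepness, and both then assemble the ladder formally (the paper phrases the assembly via $\mathcal{A}=\crho(B)+I(P)=\crho(X)+I(P)$ and the second isomorphism theorem, which is exactly your quotient map $\pi$ in disguise). The only cosmetic differences are that you track matrix coefficients $T_{x,y}$ where the paper tracks column norms $\norm{T\delta_x}$, and you correctly invoke the stabiliser $K$ of $X$ where the paper's write-up loosely writes $H$.
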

\begin{proof}

With $\mathcal{A}$ and $P$ as in the discussion above we will first show that
\[
\mathcal{A}=\crho(X)+I(P)=\crho(B)+I(P).
\]
By construction, $\mathcal{A}$ contains   $\crho(X)$ and $I(P)$, and indeed it is the smallest $C^*$-algebra 
containing these. Since the sum of an ideal and a $C^*$-subalgebra is automatically closed, the first equality follows. 

Both $\crho(B)$ and $I(P)$ are contained in $\A$, so it suffices to show that $\crho(X)\subseteq \crho(B)+I(P)$. It is enough to show that $\crho(B)+I(P)$ contains the generators of $\crho(X)$, viz.\ $T^X_g$ for all $g\in \Gamma$. We have
\[
T^X_g=(1-P)T^X_g(1-P)+(1-P)T^X_gP+PT^X_g,
\]
where $(1-P)T^X_g(1-P)\in \crho(B)$ and $(1-P)T^X_gP+PT^X_g\in I(P)$. This establishes the second equality.

By the second isomorphism theorem, we have
\[
\crho(X)/(\crho(X)\cap I(P))\cong \crho(B)/I(P,B).
\]
As $\crho(B)\subseteq I(1-P)$ we likewise have
\[
\crho(X)/(\crho(X)\cap I(P))\cong I(1-P)/(I(P)\cap I(1-P)).
\]

To obtain the required exact sequences it remains to show that the intersection $\crho(X)\cap I(P)$ is zero.

Let $T$ be an element of $\crho(X)\cap I(P)$. As $T\in \crho(X)$, it is $H$-invariant, so that we 
have $\lambda(h)T = T\lambda(h)$, for all $h\in H$, where $\lambda$ is the representation of $H$ on $\ell^2(X)$ by 
left multiplication.  In particular for each $x\in X$ and $h\in H$ we have $\lambda(h)T\delta_x=T\delta_{hx}$, so $\norm{T\delta_x}=\norm{T\delta_{hx}}$.

Since $T$ is in $I(P)$, for every $\epsilon > 0$ there exists $T'\in I(P)$, such that $\norm{T-T'} < \epsilon$ and the support of $T'$ lies in the $R$-neighbourhood of $B^c\times B^c$, for some $R>0$. 

For any $x\in X$, as $B$ is $H$-deep, there exists $h\in H$, such that $d(hx, B^c) > R$, hence $T'\delta_{hx}= 0$
so $\norm{T\delta_{x}} =\norm{T\delta_{hx}}< \epsilon$. As this holds for all $x$ and $\epsilon$ we conclude that $T=0$, and so $\crho(X)\cap I(P)=\{0\}$.

Hence
\[
\crho(B)/I(P,B)\cong \crho(B)/(I(P)\cap I(1-P))\cong \crho(X).
\]
yielding the commutative ladder.

To check that $T^B_g\mapsto T^X_g$, we observe that as elements of $\mathcal{A}$, $T^B_g$ and $T^X_g$ differ by an element of $I(P)$, hence they have the same image under the quotient map $\mathcal{A}\mapsto \mathcal{A}/I(P)$.
\end{proof}

\begin{remark}\label{intersection} It follows directly from the definitions that when $B\subseteq X\subseteq \Gamma$ we have  $\crho(B) \subseteq (1-P)\A (1-P)$ and $I(P,B)\subseteq(1-P)I(P)(1-P)$. It can be shown that when $X=\Gamma$ these both become equalities.\end{remark}

When we pass to a subspace of a group $\Gamma$ we are breaking certain relations in the group, for instance in the Toeplitz example we break the relation $-1+1=0$ as $T^\N_{-1}T^\N_1\neq T^\N_0$. To make this precise, we can view $\Gamma$ as the quotient of the monoid of tracks by the relations $(g,F)=(g,F')$ for all $F,F'$. In the Toeplitz example the track of $(-1,1)$ is $(0,\{0,1\})$ and we have removed the relation $(0,\{0,1\})=(0,\{0\})$, while keeping the relation $(0,\{0,-1\})=(0,\{0\})$ since $T^\N_{1}T^\N_{-1}= T^\N_0$. In Appendix \ref{universal} we show that we can break \emph{all} of the relations to obtain a universal object in the category of subspaces of $\Gamma$.

\section{The Hilbert module representation for symmetric subspaces}\label{Hilbert modules}

Let $B$ be a subspace of a group $\Gamma$ and let $H$ be a subgroup of the left stabiliser of $B$. In this section we will construct a $\crho(H)$-Hilbert module $\eb$ on which the partial translation algebra $\crho(B)$ is naturally represented. The symmetries of the space given by $H$ result in symmetries of the operators in $\crho(B)$ and this is encoded by the fact that these are adjointable operators on the Hilbert module over $\crho(H)$. As the elements of $H$ are symmetries of the left action, it is convenient to follow the non-standard convention of using \emph{left} Hilbert modules.

We begin with a general construction. Let $A$ be a $C^*$-algebra and $p$ a projection in $A$. Then $pAp$ is a $C^*$-algebra, indeed a subalgebra of $A$, and the space $pA$ has the structure of a left $pAp$ module with product $pap\cdot pb=papb$. Moreover $pA$ is equipped with a $pAp$-valued inner product defined by $\ip{pa,pb}=pab^*p$ and the corresponding norm is the restriction to $pA$ of the norm on $A$. Thus $pA$ is complete and is hence a (left) $pAp$-Hilbert module.

More generally suppose $A$ is a $C^*$-algebra of operators on a Hilbert space $H$ and $p$ is a projection in $\B(H)$. If $pAp$ is contained in $pA$ then the closure $\overline{pAp}$ is a $C^*$-algebra and the closure $\overline{pA}$ is a $\overline{pAp}$-Hilbert module.

We will apply this to the algebra $\crho(B)$. Let $b\in B$ and let $p=\phb$ denote the projection of $\ell^2(B)$ onto $\ell^2(Hb)$. For $g\in\Gamma$ let $\sigma_g=p T^B_{b^{-1}g}$. The operator $p\rho(b^{-1}g)$ on $\ell^2(\Gamma)$ takes $\ell^2(Hg)$ to $\ell^2(Hb)$, hence the truncation $\sigma_g$ vanishes whenever $g\notin B$. Now for $g\in B$ and $g'\in G$ we have $\sigma_g T^B_{g'}=\sigma_{gg'}$ when $gg'\in B$ and is otherwise zero. If follows that the span of $\sigma_g$ for $g\in B$ is dense in $p\crho(B)$.

\begin{proposition}
If $b\in B$ and $p=\phb$ then $p\crho(B)p$ is contained in $p\crho(B)$. Moreover the algebra $p\crho(B)p$ is closed and is isomorphic to $\crho(H)$.
\end{proposition}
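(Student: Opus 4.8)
The plan is to compute the corner $\phb\,\crho(B)\,\phb$ completely explicitly and to recognise it as the right regular representation of $H$. First I would record the action of the generators $\sigma_g=\phb T^B_{b^{-1}g}$. Since $H$ stabilises $B$ we have $Hb\subseteq B$, and a direct calculation from the formula for $T^B_{b^{-1}g}$ shows that for $g\in B$ the operator $\sigma_g$ is the partial isometry sending $\delta_{hg}\mapsto\delta_{hb}$ for $h\in H$ and annihilating the orthogonal complement of $\ell^2(Hg)$. In particular $\sigma_g\phb=\sigma_g$ when $g\in Hb$ and $\sigma_g\phb=0$ otherwise, since the domain $\ell^2(Hg)$ meets the range $\ell^2(Hb)$ of $\phb$ exactly when $Hg=Hb$. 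Writing $U\colon \ell^2(Hb)\to\ell^2(H)$, $\delta_{hb}\mapsto\delta_h$, for the obvious unitary, one checks that $\sigma_{hb}$ acts on $\ell^2(Hb)$ by $\delta_{h'b}\mapsto\delta_{h'h^{-1}b}$, so that $U\sigma_{hb}U^*$ is precisely the right regular representation of $H$. Consequently $\sigma_{hb}\sigma_{h'b}=\sigma_{hh'b}$ and $\sigma_{hb}^*=\sigma_{h^{-1}b}$, and the closed linear span $\overline{\mathrm{span}}\{\sigma_{hb}:h\in H\}$ is a copy of $\crho(H)$.

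Next I would establish one inclusion essentially for free. Since $\{\sigma_g:g\in B\}$ spans a dense subspace of $\phb\crho(B)$, every element of $\phb\crho(B)\phb$ is a norm limit of combinations of the operators $\sigma_g\phb$, each of which is either $0$ or some $\sigma_{hb}$; hence $\phb\crho(B)\phb\subseteq\overline{\mathrm{span}}\{\sigma_{hb}\}$.

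The reverse inclusion is the step I expect to be the main obstacle, since compression by a projection is in general neither multiplicative nor range-closed, and indeed $\phb\notin\crho(B)$. The key observation I would exploit is that $\phb$ \emph{commutes} with each generator $T^B_{b^{-1}hb}$: because $Hb\subseteq B$, the translation by $b^{-1}hb$ carries $\ell^2(Hb)$ into itself and its orthogonal complement into itself, so it is block diagonal for $\ell^2(Hb)\oplus\ell^2(B\setminus Hb)$ and commutes with the projection onto $\ell^2(Hb)$. Thus $\phb$ commutes with the entire sub-$C^*$-algebra $C^*(T^B_{b^{-1}hb}:h\in H)\subseteq\crho(B)$, and on this subalgebra the compression $a\mapsto\phb a\phb=\phb a$ is an honest $*$-homomorphism. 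Its range is therefore a $C^*$-algebra; it contains each $\sigma_{hb}=\phb T^B_{b^{-1}hb}$, so being closed it must equal $\overline{\mathrm{span}}\{\sigma_{hb}\}$. This single observation delivers three things at once: the reverse inclusion $\overline{\mathrm{span}}\{\sigma_{hb}\}\subseteq\phb\crho(B)\phb$; the fact that every such element has the literal form $\phb a$ with $a\in\crho(B)$, giving $\phb\crho(B)\phb\subseteq\phb\crho(B)$; and the closedness of $\phb\crho(B)\phb$.

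Combining the two inclusions gives $\phb\crho(B)\phb=\overline{\mathrm{span}}\{\sigma_{hb}\}$, which is closed, and the identification with $\crho(H)$ then follows from the first paragraph. The one point to treat with care is faithfulness of that identification: I would invoke that $U\sigma_{hb}U^*$ is the right regular representation of $H$ and that this representation is by definition faithful on $\crho(H)$, so that $h\mapsto\sigma_{hb}$ extends to an isometric $*$-isomorphism $\crho(H)\to\phb\crho(B)\phb$ rather than a mere surjection, completing the proof.
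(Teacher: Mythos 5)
Your proof is correct, and its first half coincides with the paper's: you compute the compressions of the generators exactly as the paper does (writing $g=b^{-1}kb$, the operator $T^B_g$ either preserves the block $\ell^2(Hb)$, when $k\in H$, or carries it onto the orthogonal block $\ell^2(Hk^{-1}b)$, so $\sigma_g\phb$ is $\sigma_{hb}$ for $g=hb\in Hb$ and $0$ otherwise), and you identify the closed span of the $\sigma_{hb}$ with $\crho(H)$ via the right regular representation. Where you genuinely diverge is the closedness step. The paper constructs an inverse map $\crho(H)\to \phb\crho(B)\phb$ by inducing from $\crho(b^{-1}Hb)$ up to $\crho(\Gamma)$, truncating to $\ell^2(B)$, and then truncating by $\phb$, leaving the multiplicativity of this composite implicit; you instead observe that $\phb$ commutes with each $T^B_{b^{-1}hb}$ (right translation by $b^{-1}hb$ preserves both $Hb$ and its complement in $B$), so compression restricted to the $C^*$-subalgebra $D=C^*\bigl(T^B_{b^{-1}hb}:h\in H\bigr)\subseteq\crho(B)$ is an honest $*$-homomorphism, whose range is automatically closed. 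This is the same commutation phenomenon that underlies the paper's double-truncation construction --- your $\phb D$ is precisely the image of the paper's induced copy of $\crho(b^{-1}Hb)$ --- but your version stays entirely inside $\crho(B)$, makes the multiplicativity explicit, and yields in one stroke the reverse inclusion, closedness, and the literal containment $\phb\crho(B)\phb\subseteq\phb\crho(B)$ for \emph{all} elements (the paper verifies this only on the generators, with the general case recovered afterwards); your use of the unitary $U$ likewise makes the isometry of the identification with $\crho(H)$ explicit where the paper only says ``identifying $pT^B_gp$ with $\rho(h)$''. One throwaway remark is false, though unused: $\phb$ \emph{can} lie in $\crho(B)$ --- for $B=\N\subset\Z$, $H=\{e\}$, $b=0$ one has $\phb=T^B_0-(T^B_1)^*T^B_1$, and more generally co-separability forces $\ph\in\crho(B)$, as in the proof of Theorem \ref{ai-ideal} --- so ``indeed $\phb\notin\crho(B)$'' should be weakened to ``$\phb$ need not lie in $\crho(B)$''.
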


\begin{proof}
For $g\in\Gamma$ consider $pT^B_gp$. As remarked above the operators $\sigma_g$ span a dense subspace of $p\crho(B)$ so it follows that the operators $pT^B_gp$ span a dense subspace of $p\crho(B)p$. Writing $g=b^{-1}kb$, if $k\in H$ then $T^B_g$ preserves $\ell^2(Hb)$, while for $k\notin H$ it takes $\ell^2(Hb)$ to the orthogonal subspace $\ell^2(Hk^{-1}b)$. In the former case $pT^B_gp=pT^B_g\in p\crho(B)$ while in the latter case $pT^B_gp=0\in p\crho(B)$. This gives the required containment.

Identifying $pT^B_gp$ for $g=b^{-1}hb$ with $\rho(h)\in\C[H]$ yields an isomorphism between the $p$-truncation of the translation ring of $B$ with the group-ring of $H$. It follows that $p\crho(B)p$ is isomorphic to a dense subalgebra of $\crho(H)$. To see that the image is $\crho(H)$ and hence $p\crho(B)p$ is closed we construct an inverse map from $\crho(H)$ to $p\crho(B)p$. This is obtained by inducing from $\crho(b^{-1}Hb)$ to $\crho(\Gamma)$, truncating to $\ell^2(B)$ to obtain an element of $\crho(B)$ and then further truncating by $p$ to obtain an element of $p\crho(B)p$.
\end{proof}

We observe that the inner product $\ip{\sigma_g,\sigma_k}$ is $\rho(gk^{-1})$ when $gk^{-1}\in H$ and is zero otherwise. In particular this does not depend on the choice of coset $Hb\subseteq B$, hence different choices yield isomorphic Hilbert modules. We now make the following definition.

\begin{definition}\label{notation for Hilbert modules}
For $B$ a subspace of $\Gamma$ and $H$ a subgroup of the left-stabiliser of $B$, let ${}_H\cale_B$ denote the $\crho(H)$-Hilbert module $\overline{p\crho(B)}$ constructed above. When $H$ equals the left-stabiliser of $B$ we abridge our notation to $\eb$.
\end{definition}

For $\cale$ a Hilbert module over a group $C^*$-algebra $\crho(H)$ and $\mathcal{F}$ a Hilbert module equipped with a representation of $\crho(H)$ by adjointable operators we will abridge the notation for the tensor product $\mathcal{F}\olddispotimesh\cale$ as $\mathcal{F}\otimesh\cale$, for self-evident typographical reasons.

\begin{remark}\label{compacts on eb}
The algebra of compact operators on $\eb$ can be identified with $\crho(H)\otimes \K(\ell^2(H\backslash B))$. To see this let $\tb$ denote a transversal for the left action of $H$ on $B$, so that we have a bijection $B\leftrightarrow H\times \tb$ given by $hb\leftrightarrow (h,b)$ for $h\in H,b\in \tb$. Then we can identify the dense subspace of $\eb$ spanned by $\sigma_g$ for $g\in B$ with the algebraic tensor product $\C[H]\odot c_c(\tb)$. This identifies the inner product on $\eb$ with the tensor product of the inner product $\ip{a_1,a_2}=a_1a_2^*$ on $\C[H]$ by the usual $\C$-valued inner product on $c_c(\tb)$. Hence completing we obtain an isomorphism of Hilbert modules $\eb\cong \crho(H)\otimes \ell^2(\tb)$. Using the natural identification of $\tb$ with the quotient $H\backslash B$ yields the required isomorphism $\K(\eb)\cong\crho(H)\otimes \K(\ell^2(H\backslash B))$.
\end{remark}

As we are working with \emph{left} Hilbert modules, the adjointable operators are naturally written on the right. Certainly $\crho(B)$ acts on $p\crho(B)$ by right multiplication, and this extends to $\eb=\overline{p\crho(B)}$ by continuity. This gives a representation as adjointable operators since for $\xi,\eta\in \eb$ and $z\in\crho(B)$ we have $\ip{\xi x,\eta}=\xi x \eta^*=\ip{\xi ,x^*\eta}$. Hence right multiplication provides a representation of $\crho(B)$ on $\eb$. Explicitly for any $g\in B$ and $k\in \Gamma$ we have $\sigma_{g}T^B_k=\sigma_{gk}$.

Now, choosing a basepoint $b\in B$, we use the fact that the basis vector $\delta_b$ in $\ell^2(B)$ is a cyclic vector for the representation of $\crho(B)$ (that is $\crho(B)\delta_b$ is dense in $\ell^2(B)$) to show that the representation of $\crho(B)$ on $\eb$ is faithful. Given $z\neq 0$ in $\crho(B)$ there exists some basis vector $\delta_{bg}$, with $bg\in B$, such that $z^*\delta_{bg}\neq 0$. Writing $\delta_{bg}$ as $(T^B_g)^*\delta_b$ we have $z^*(T^B_g)^*p\neq 0$, since $\delta_b$ is in the range of $p$, and so $pT^B_gz$ is non-zero. Hence we see that for non-zero $z\in \crho(B)$, the image of $z$ in $\B(\eb)$ is non-zero, i.e.\ the representation of $\crho(B)$ on $\eb$ is faithful.

\begin{remark}
Taking the subgroup to be trivial, so that $\crho(H)=\C$, we obtain a faithful representation of $\crho(B)$ on the  left-$\C$-Hilbert module ${}_{\{e\}}\eb$. As a vector space this is simply the original Hilbert space $\ell^2(B)$, however as the operators are written on the right, the representation is a $*$-homomorphism $\crho(B)\to\B({}_{\{e\}}\eb)=\B(\ell^2(B))^\op$. The construction works for any subset $B$ of a group $\Gamma$: applying it when $B=\Gamma$ gives an opposite representation for the group $C^*$-algebra, $\crho(\Gamma)\to \B(\ell^2(\Gamma))^\op$. Returning to the general case where $B$ is a subset of $\Gamma$ and $H$ a subgroup of its left-stabiliser, we can form $\ell^2(H)\otimesh{}_H\eb$, where the representation of $\crho(H)$ on $\ell^2(H)$ is the opposite representation $\crho(H)\to \B(\ell^2(H))^\op$. The tensor product $\ell^2(H)\otimesh{}_H\eb$ is isomorphic to $\ell^2(B)$ and tensoring the representation of $\crho(B)$ on ${}_H\eb$  with $1$ on $\ell^2(H)$ yields the opposite representation of $\crho(B)$ on $\ell^2(B)$. In this way we can pass back from operators on ${}_H\eb$ to operators on Hilbert space, albeit with the unusual convention of operators acting on the right.
\end{remark}

\medskip

In the next section we will show, under suitable (almost invariance) hypotheses, that the ideal in the exact sequence of Theorem \ref{BNWsequence} is precisely the algebra $\K(\eb)$ of compact operators on the Hilbert module $\eb$. The ideal is therefore strongly Morita-equivalent to $\crho(H)$, indeed isomorphic to $\crho(H)\otimes \K(\ell^2(\tb))$ by Remark \ref{compacts on eb}.

\section{The ideal $I(P,B)$ for almost invariant subspaces}\label{almost-invariance}

The Toeplitz extension, the Pimsner-Voiculescu sequence and Lance's extension all arise from consideration of subspaces with ``small'' boundaries. In this section we will make the notion of ``small'' precise and in this context we will compute the ideal term $I(P,B)$ thus completing the proof of the extension theorem.

We start by recalling the classical definition of an almost invariant subset of a group. Let $H$ be a subgroup of $\Gamma$. We say that a subspace of $\Gamma$ is \emph{$H$-finite} if it is contained in a finite union of right $H$-cosets, $Hg_1\cup\dots\cup Hg_n$. A subspace $B$ of $\Gamma$ is \emph{$H$-almost invariant} if the symmetric difference $B{\scriptstyle\triangle} Bg$ is $H$-finite for all $g\in \Gamma$. It is well known that for a finitely generated group this is equivalent to the assertion that, taking the quotient of the Cayley graph of $\Gamma$ by the subgroup $H$ the image of $B$ has finite coboundary, i.e., the set of edges in the quotient with precisely one end point in the image of $B$ is finite. Hence (classic) $H$-invariant, $H$-almost invariant subsets correspond precisely to subsets of the quotient $H\backslash\Gamma$ which have small coboundary.

The following lemma relates the classical notion of almost invariant subset to our relative version:

\begin{lemma}\label{ai_coseparable}
Let $B$ be a subset of a group $\Gamma$ with left stabiliser $H$. Then $B$ is $H$-almost invariant if and only if it is relatively $H$-almost invariant with $B\subset X=\Gamma$. 
\end{lemma}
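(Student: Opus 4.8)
The plan is to split the biconditional and to notice first that, with $X=\Gamma$, condition (1) of Definition \ref{relative almost invariance} is \emph{verbatim} the assertion that $Bg\setminus B$ is $H$-finite for every $g$, since $HF$ is precisely a finite union of right $H$-cosets. The equivalence of $H$-almost invariance with condition (1) is then a routine symmetric-difference computation; the genuinely new content, and the only place the hypothesis that $H$ is the \emph{full} left stabiliser of $B$ is used, is that condition (2) (co-separability) is then automatic.

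First I would prove that condition (1) (for $X=\Gamma$) is equivalent to $H$-almost invariance. The forward direction is immediate, as $Bg\setminus B\subseteq B\symdiff Bg$, so $H$-finiteness of $B\symdiff Bg$ passes to $Bg\setminus B$. For the converse I would write $B\symdiff Bg=(Bg\setminus B)\cup(B\setminus Bg)$. The first piece is $H$-finite by hypothesis; for the second I would use the identity $B\setminus Bg=(Bg^{-1}\setminus B)g$, observe that $Bg^{-1}\setminus B$ is $H$-finite by condition (1) applied to $g^{-1}$, and note that right multiplication by $g$ sends $HF$ to $H(Fg)$ and hence preserves $H$-finiteness. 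This already yields the implication from relative $H$-almost invariance (which includes condition (1)) to $H$-almost invariance, and shows that in the reverse implication condition (1) holds for free.

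It remains to deduce co-separability from $H$-almost invariance, and this is the step I expect to be the main obstacle. After discarding the degenerate cases $B\in\{\emptyset,\Gamma\}$ (where $H=\Gamma$ and any $F'$ works), I would fix an element $g^*$ with $B\symdiff Bg^*\neq\emptyset$ together with a point $x_0\in B\symdiff Bg^*$; such $g^*$ exists because a proper nonempty $B$ cannot satisfy $Bg=B$ for all $g$. The argument is by contradiction: assuming no finite set separates, I exhaust $\Gamma$ by finite sets $F'_n$, each containing $x_0$ and $x_0(g^*)^{-1}$, and obtain $g_n\notin H$ with $F'_n\cap(B\symdiff g_nB)=\emptyset$, i.e.\ $g_nB$ agrees with $B$ throughout $F'_n$.

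The crux is to show that almost invariance forces the $g_n$ into finitely many left cosets of $H$. Since $g_nB$ agrees with $B$ at the two points $x_0,x_0(g^*)^{-1}\in F'_n$ and $x_0\in B\symdiff Bg^*$ records $\chi_B(x_0)\neq\chi_B(x_0(g^*)^{-1})$, the same disagreement holds for $g_nB$, which rearranges to $g_n^{-1}x_0\in B\symdiff Bg^*$. As $B\symdiff Bg^*$ is $H$-finite, say contained in $HF_1$ with $F_1$ finite, this gives $g_n\in x_0F_1^{-1}H$, a fixed finite union of left cosets of $H$. By pigeonhole some coset $c_0H$ satisfies $g_nB=c_0B$ for infinitely many, hence arbitrarily large, $n$; then $c_0B$ agrees with $B$ on every such $F'_n$, and since the $F'_n$ exhaust $\Gamma$ we conclude $c_0B=B$, so $c_0\in H$ and $g_nB=B$, contradicting $g_n\notin H$. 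This contradiction supplies the required finite separating set and completes the reverse implication. The one delicate point throughout is the bookkeeping of left versus right: co-separability is stated in terms of the left translates $gB$ and the left stabiliser, while almost invariance controls the right translates $Bg$, and the whole argument turns on converting the former into the latter through the single boundary point $x_0$.
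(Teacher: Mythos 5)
Your proof is correct, and at its core it runs on the same engine as the paper's: the step where agreement of $g_nB$ with $B$ at the pair $\{x_0,\,x_0(g^*)^{-1}\}$ forces $g_n^{-1}x_0\in B\symdiff Bg^*\subseteq HF_1$, and hence the $g_n$ into finitely many left cosets of $H$, is exactly the paper's proof that $\mathcal{B}=\{gB\mid b\in gB,\ c\notin gB\}$ is finite --- your pair plays the role of the paper's $(b,c)$ with $b\in B$, $c\notin B$, and $g^*$ the role of $c^{-1}b$. You diverge only in the finish: the paper is constructive, choosing one distinguishing point $d\in B\symdiff gB$ for each of the finitely many bad translates and taking $F'$ to be these points together with $b,c$; you instead negate co-separability and run an exhaustion-plus-pigeonhole contradiction. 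Both are valid, but your route costs a little: the exhaustion by finite sets $F'_n$ uses countability of $\Gamma$ (harmless under the paper's standing assumption, and repairable in general by pigeonholing over the directed set of all finite subsets), and you should say explicitly that the $F'_n$ are nested so that agreement on infinitely many of them yields $c_0B=B$; the paper's construction works verbatim for arbitrary groups and produces $F'$ directly. Your first half, identifying condition (1) with classical $H$-almost invariance via $B\symdiff Bg=(Bg\setminus B)\cup(Bg^{-1}\setminus B)g$, matches the paper exactly.
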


\begin{proof}
It is obvious that $H$-almost invariance is  equivalent to part (1) of definition \ref{relative almost invariance} by writing $B\symdiff Bg= (Bg\setminus B) \cup (Bg^{-1}\setminus B)g$. It therefore suffices to prove that $H$-almost invariance implies condition (2) of definition \ref{relative almost invariance}.  

If $B=\Gamma$ or $B=\emptyset$ there is nothing to prove, otherwise we may choose $b\in B$ and $c$ in the complement of $B$. We  note that the set \[\mathcal B=\{gB\mid g\in \Gamma, b\in gB, c\not \in gB\}\] is finite. To see this suppose that $b\in gB, c\not\in gB$ so that $g^{-1}b\in B\symdiff Bc^{-1}b$ which by $H$almost invariance is of the form $HF$ for some finite set $F$. Thus $g\in bF^{-1}H$ so $gB=bf^{-1} B$ for one of finitely many $f$. 

For each $gB\in \mathcal B\setminus\{B\}$ we choose an element $d\in B\symdiff gB$ and let $F'$ denote the union of these finitely many elements together with $b,c$. It is clear that for each of the $gB$ concerned we have $F'\cap (B\symdiff gB) \not = \emptyset$. On the other hand if $gB\not \in \mathcal B$ then either $b$ or $c$ will lie in $F'\cap (B\symdiff gB)$ so if $F'\cap (B\symdiff gB) = \emptyset$ then $gB = B$ as required.
\end{proof}
\bigskip
Lemma \ref{ai_coseparable} shows that in the classical case of almost invariance co-separability is automatic. An example to show that this is not always the case in the relative setting is given in Appendix \ref{example_coseparability}. As we will later see, finiteness of the coboundary of $H\backslash B$ in  $H\backslash \Gamma$ is precisely what is required to ensure that the ideal $ I(P,B)$ of elements of the algebra  $C^*_rB$ which represent the trivial element on $\ell^2(\Gamma)$ are precisely those which are represented as compact operators on the corresponding Hilbert module. Our relativisation of almost invariance gives the same result when representing  $C^*_rB$ on the Hilbert module arising from the inclusions $B\subset X\subseteq \Gamma$.  Condition (1) of relative almost invariance, Definition \ref{relative-H-almost-invariant}, is required to ensure that all  of the elements in $I(P,B)$ are compact, while condition (2), co-separability, is required to ensure that every compact operator arises in this way. The two  conditions may be viewed geometrically as separation properties. Classical almost invariance of $B$ can be is characterised by the property that each pair of  points in $\Gamma$ is separated by only a finite  collection of translates $gB$; co-separability can be characterised by the property  that the subset $B$ may be distinguished from all of its translates by a fixed finite collection of points, and it allows us to isolate single cosets of $H$ in a controlled way. It is this which allows us to demonstrate that the rank one operators can be realised by elements of $I(P,B)$.

\begin{lemma}\label{H_isolation}
Let $B$ be a subset of a group $\Gamma$ with left stabiliser $H$ such that $B$ is co-separable. Then there exist non-empty finite subsets $F_1, F_2\subset \Gamma$ such that 
\[
H= \bigcap\limits_{g\in F_1} Bg \setminus \bigcup\limits_{g\in F_2} Bg.
\] 

\end{lemma}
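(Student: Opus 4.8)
The plan is to extract the two sets directly from the finite witness $F'$ provided by co-separability (condition (2) of Definition \ref{relative almost invariance}), after translating that condition from the language of symmetric differences into the language of right translates $Bg$. Recall $F'$ satisfies $F'\cap(B\symdiff gB)=\emptyset$ if and only if $g\in H$. The first step is to rewrite this pointwise: for a fixed $g$, the intersection $F'\cap(B\symdiff gB)$ is empty precisely when, for every $f\in F'$, the elements $f$ and $g^{-1}f$ lie on the same side of $B$, using $f\in gB \iff g^{-1}f\in B$. Splitting $F'$ along the partition $F'=(F'\cap B)\sqcup(F'\setminus B)$ then yields the characterisation
\[
g\in H \iff \bigl(\forall f\in F'\cap B:\ g^{-1}f\in B\bigr)\ \text{and}\ \bigl(\forall f\in F'\setminus B:\ g^{-1}f\notin B\bigr).
\]

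Next I would convert this into a statement about right translates. Using the elementary identity $gf\in B \iff g\in Bf^{-1}$, together with the symmetry $H=H^{-1}$ of the stabiliser (so that $g\in H \iff g^{-1}\in H$), the displayed condition applied to $g^{-1}$ becomes $g\in Bf^{-1}$ for each $f\in F'\cap B$ and $g\notin Bf^{-1}$ for each $f\in F'\setminus B$. Setting $F_1=\{f^{-1}\mid f\in F'\cap B\}$ and $F_2=\{f^{-1}\mid f\in F'\setminus B\}$, this reads exactly as $H=\bigcap_{g\in F_1}Bg\setminus\bigcup_{g\in F_2}Bg$, which is the desired equality.

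It remains to ensure that $F_1$ and $F_2$ are both non-empty, that is, that $F'$ meets both $B$ and its complement. In the relevant case of a proper non-empty subset $B$ this is achieved by enlarging $F'$: adding points to a co-separating set preserves co-separability, since the defining equivalence is only strengthened (the empty-intersection hypothesis becomes harder to satisfy, while for $g\in H$ one has $B\symdiff gB=\emptyset$ regardless). I may therefore assume $F'$ contains a chosen $b\in B$ and $c\in\Gamma\setminus B$, forcing $b^{-1}\in F_1$ and $c^{-1}\in F_2$. The degenerate cases $B=\emptyset$ and $B=\Gamma$ are incompatible with the non-emptiness requirement and are not relevant here, so I would simply restrict to proper non-empty $B$.

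The only delicate point is the bookkeeping in the middle step: co-separability is naturally a statement about the left action of $\Gamma$ on $B$ through $B\symdiff gB$, whereas the target expression involves the right translates $Bg$, so one must pass through the inverse and exploit that $H$ is a subgroup. Once the identity $gf\in B \iff g\in Bf^{-1}$ and the symmetry $H=H^{-1}$ are in hand, the remainder is a routine unwinding of the quantifiers.
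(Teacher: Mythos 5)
Your proof is correct and is essentially the paper's own argument: both extract $F_1=(F'\cap B)^{-1}$ and $F_2=(F'\setminus B)^{-1}$ from the co-separating set $F'$ after enlarging it to meet both $B$ and its complement, and both conclude via the identity $gf\in B\iff g\in Bf^{-1}$ together with $H=H^{-1}$ (the paper proves the two inclusions separately, using the left-stabiliser property for the forward one, where you package the same content as a single biconditional, which is a purely cosmetic difference). Your explicit justifications that enlarging $F'$ preserves co-separability and that $B$ must be a proper non-empty subset are points the paper leaves implicit, and they are handled correctly.
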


\begin{proof}
By definition there is a finite subset $F'$ such that $(B\symdiff gB)\cap F' $ is empty if and only if $g\in H$.  If necessary we enlarge $F'$ so that the two sets  $E_1=F'\cap B, E_2=F'\setminus B$ are both non-empty, and set $F_1=E_1^{-1}, F_2=E_2^{-1}$. 

First suppose $h\in H$. Then if $g\in E_1\subseteq B$, then $hg\in B$, as $H$ is the left stabiliser of $B$, so that $h\in Bg^{-1}$. On the other 
hand, if $g\in E_2$, then $g\not\in B$, so $hg\not\in B$ and $h\not\in Bg^{-1}$. It follows that 
\[
H \subseteq \bigcap\limits_{g\in F_1} Bg \setminus \bigcup\limits_{g\in F_2} Bg.
\]
Conversely, suppose that $k\in Bg^{-1}$ for all $g\in E_1$ and $k\not\in Bg^{-1} $ for all $g\in E_2$. For $g\in E_1$, 
$k\in Bg^{-1}$ so  $g\in k^{-1}B$. As $g$ is also in $B$,  we have that $g\not\in B\symdiff k^{-1}B$. 

Now if $g\in E_2$, $k\not\in Bg^{-1}$, so $g\not\in k^{-1}B$. As $g\not\in B$, again we have $g\not\in B\symdiff k^{-1}B$. Since 
$F'=E_1\cup E_2$, it follows that $(B\symdiff k^{-1}B)\cap F'$ is empty, so $k\in H$. This implies the reverse inclusion. 
\end{proof}

\subsection{Group actions on trees}\label{sub:Bass-Serre}
One important construction of almost invariant subspaces is furnished by actions on trees. Let $\tree$ be a tree and suppose that $\Gamma$ acts on $\tree$ without edge inversions (note that edge inversions can be removed by barycentrically subdividing). Removing an edge $\epsilon$ from $\tree$ divides the tree into two half-spaces. Choosing a vertex $v\in\tree$ selects one of these half-spaces, the one containing $v$. We denote this half-space by $\calb=\calb_{\epsilon,v}$. The vertex also provides a map from $\Gamma$ to $\tree$ defined by $g\mapsto gv$; given a pair $\epsilon,v$ we define $B=B_{\epsilon,v}$ to be the preimage of $\calb_{\epsilon,v}$ under this map.

The stabiliser of $\epsilon$ is the stabiliser of $\calb$ which is certainly contained in the left stabiliser of $B$. In the case that the orbit $\Gamma v$ is not contained in $\calb$, so $B$ is a proper subset of $\Gamma$ we have equality. As usual we denote the stabiliser by $H$. To see that $B$ is $H$-almost invariant, suppose $k\in B\setminus Bg$. This means that $kv\in \calb$ while $kg^{-1}v$ is not. Thus $v,g^{-1}v$ are separated by $k^{-1}\epsilon$. There are only finitely many edges separating $v,g^{-1}v$, so $k^{-1}$ lies in a finite union of left-$H$-cosets $g_1H\cup\dots g_nH$. Hence $k$ lies in a finite union of right-$H$-cosets, $Hg_1^{-1}\cup\dots Hg_n^{-1}$, i.e.\ $B\setminus Bg$ is $H$-finite. Similarly for $Bg\setminus B$, hence $B$ is $H$-almost invariant.

The right stabiliser of $B$ clearly contains the stabiliser of $v$. In general it will be bigger, however in the case that there is a single orbit of edges we have equality: if $k$ does not fix $v$ then there is a translate $g\epsilon$ of the edge $\epsilon$ separating $v,kv$. It follows that one of $g^{-1}v,g^{-1}kv$ is in $\calb$ and the other is not, so $k$ is not in the right stabiliser of $B$.

\bigskip

The case in which there is a single orbit of edges is of particular interest to us. In this case there are either one or two orbits of vertices depending on whether the quotient of $\tree$ by $\Gamma$ is a circle or an interval. The group splits as an $HNN$-extension in the former case and as a free product with amalgamation in the latter.

We consider first the case that $\Gamma$ is a free product with amalgamation $\Gamma=G\mathop{*}_H S$. Then $\Gamma$ acts on the Bass-Serre tree $\tree$ which has vertex set $\{\gamma G \mid \gamma\in \Gamma\}\cup\{\gamma S \mid \gamma\in \Gamma\}$. The action of $\Gamma$ is simply by left multiplication on these cosets. Two vertices $\gamma_1G, \gamma_2S$ are joined by an edge if their intersection $\gamma_1 G\cap \gamma_2 S$ is non-empty, in which case the intersection is $\gamma H$ for some $\gamma\in \Gamma$. Hence vertices are joined by an edge precisely when they can be written as $\gamma G,\gamma S$ for some $\gamma\in \Gamma$. Let $\epsilon$ be the edge labelled by $H$ and let $v=G$. The stabilisers of these are $H,G$ respectively and hence we have $B=B_{H,G}$ an $H$-almost invariant subspace of $\Gamma$ with left stabiliser $H$ and right stabiliser $G$. Selecting instead the vertex $S$ we obtain a different almost invariant subspace $B'=B_{H,S}$, with left stabiliser $H$ and right stabiliser $S$. These two subspaces are almost disjoint: the complement of $B'$ is the set $B^*=B\setminus H$.

Now consider an $HNN$-extension $\Gamma=G\mathop{*}_H$. Recall that this means $\Gamma$ is the group generated by $G$ along with one extra generator $t$, and subject to the additional relations $tht^{-1}=\theta(h)$ for $h\in H$, where $\theta$ is a monomorphism from $H$ to $G$. The Bass-Serre tree for $\Gamma$ is the tree $\tree$ whose vertices are the left cosets $\gamma G$ for $\gamma \in \Gamma$; the action of $\Gamma$ is again by left-multiplication. The edges are labelled by cosets $\gamma H$ and are naturally directed: the edge labelled $\gamma H$ goes from $\gamma t^{-1}G$ to $\gamma G$. Note that for $h\in H$, $\gamma ht^{-1}=\gamma t^{-1}\theta(h)$ so $\gamma h t^{-1}G=\gamma t^{-1}G$, making the inital vertex of the edge well-defined. We take $\epsilon$ to be the edge labelled by $H$, and take $v=G$. The edge $\epsilon$ goes from $t^{-1}G$ to $G$ and its stabiliser is $H$, while the stabiliser of $v$ is $G$. Hence we obtain an $H$-almost invariant subspace $B=B_{H,G}$ of $\Gamma$ with left stabiliser $H$ and right stabiliser $G$.

For both the free product with amalgamation and the $HNN$-extension the almost invariant sets can be described in terms of reduced words. Each element of $\Gamma=G\mathop{*}_H S$ can be written as a word $w=\syl_1\dots \syl_n$ in the alphabet $G\cup S$. If two consecutive letters $\syl_i,\syl_{i+1}$ both lie in $G$ or both lie in $S$ then the word can be simplified by replacing $\syl_i\syl_{i+1}$ by a single letter $\syl$ (the product of $\syl_i,\syl_{i+1}$ in $G$ or $S$). A word $\syl_1\dots \syl_n$ is \emph{reduced} if no two consecutive syllables lies in the same subgroup $G$ or $S$. Note that the condition that $w$ is reduced means that a syllable can lie in $G\cap S=H$ only in the case that $n=1$. With this concept of reduced word we can give a simple description of $B$ as the set
$$B=\{\syl_1\syl_2\dots \syl_n \mid \syl_1\in G \text{ and $\syl_1\syl_2\dots \syl_n$ is reduced}\}.$$

In the case of the $HNN$-extension $\Gamma=G\mathop{*}_H$, every element of $\Gamma$ can be written as an alternating word of the form $g_0t^{i_1}g_1\dots t^{i_n}g_n$ where each $g_k$ is in $G$ and each $i_k$ is $\pm 1$. A word containing $tht^{-1}$ with $h\in H$ can be simplified by replacing this subword by $k=\theta(h)$. Similarly $t^{-1}kt$, $k\in \theta(H)$ can be replaced by $h=\theta^{-1}(k)$. A word is said to be \emph{reduced} if it does not contain any subword of the form $tht^{-1}$ with $h\in H$ or $t^{-1}kt$ with $k\in \theta(H)$. The set $B$ can now be described as those group elements which cannot be written as a reduced word beginning with $t^{-1}$.

The case where $B$ is constructed from an action on a tree in this way will be considered further in Sections \ref{free-products} and \ref{HNN}.

\subsection{Computation of the ideal}

For this section, we shall assume that $B$ is a relatively $H$-almost invariant subspace of $X\subseteq\Gamma$, where $H$ is the left stabiliser of $B$ and is contained in the left stabiliser $K$ of $X$.
Recall that $P$ is the projection of $\ell^2(\Gamma)$ onto $\ell^2(B^c)$, the $C^*$-algebra generated by $P$ and $\crho(\Gamma)$ is denoted $\A$, and $I(P,B)$ is the intersection of the ideal $I(P)$ in $\A$ generated by $P$ with the subalgebra $\crho(B)$.

\begin{theorem}\label{ai-ideal}
Let $B$ be a relatively $H$-almost invariant subspace of $X\subseteq\Gamma$, where $H$ is the left stabiliser of $B$ and is contained in the left stabiliser $K$ of $X$.
Then $I(P,B) =\K(\eb).$
\end{theorem}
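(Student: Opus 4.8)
The plan is to prove the two inclusions separately, using the faithful representation of $\crho(B)$ on the Hilbert module $\eb$ established in Section \ref{Hilbert modules}, and matching the two conditions of relative $H$-almost invariance to the two directions. The natural strategy is: condition (1) controls the ``size'' of boundary operators and forces $I(P,B)\subseteq \K(\eb)$, while condition (2), co-separability, together with Lemma \ref{H_isolation}, supplies enough rank-one operators to force the reverse inclusion $\K(\eb)\subseteq I(P,B)$.

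First I would set up the key geometric observation underlying the inclusion $I(P,B)\subseteq\K(\eb)$. An element of $\crho(B)$ that lands in $I(P)$ is, up to arbitrarily small error, supported near $B^c\times B^c$ inside $X$; combined with the fact that it is a translation operator (so of the form $T^B_{g,F}$ with controlled track), condition (1) of Definition \ref{relative almost invariance} guarantees that the relevant support meets only finitely many $H$-cosets, after quotienting by the left $H$-action. Using the identification $\K(\eb)\cong \crho(H)\otimes\K(\ell^2(\tb))$ from Remark \ref{compacts on eb}, I would argue that such boundary operators correspond to operators whose ``$\tb$-component'' has finite rank, hence lie in $\K(\eb)$. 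The point is that the generators of $I(P,B)$ are exactly the translation operators detecting the boundary $B^c$, and finiteness of the coboundary of $H\backslash B$ makes these finite-rank modulo $\crho(H)$; passing to the closure gives the inclusion into compacts.

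For the reverse inclusion $\K(\eb)\subseteq I(P,B)$, I would use Lemma \ref{H_isolation} to realise rank-one operators explicitly. Since $\K(\eb)$ is generated by rank-one operators $\theta_{\sigma_g,\sigma_k}$, it suffices to exhibit each such operator as an element of $I(P,B)$. Lemma \ref{H_isolation} provides finite sets $F_1,F_2$ expressing the single coset $H$ as $\bigcap_{g\in F_1}Bg\setminus\bigcup_{g\in F_2}Bg$; I would translate this set-theoretic identity into an operator identity, building a projection-like element of $\crho(B)$ out of products $T^B_gT^{B*}_g$ (which detect membership in $Bg$) that isolates a single $H$-coset, and which visibly lies in $I(P)$ because it is built from operators involving the complement $B^c$. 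Composing such ``isolating'' elements with translation operators $T^B_g$ on either side produces the rank-one operators between arbitrary $\sigma_g,\sigma_k$, giving $\K(\eb)\subseteq I(P,B)$.

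The main obstacle I anticipate is the first inclusion, specifically making rigorous the passage from the approximate support condition (``$\epsilon$-close to an operator supported near $B^c\times B^c$'') to membership in the compacts on the module. One must be careful that the module $\eb$ is over $\crho(H)$, so ``finite rank'' means finite rank over $\crho(H)$, i.e.\ finitely many $\tb$-coordinates, not finitely many points of $B$; the $H$-invariance of elements of $\crho(B)$ and the coset structure $B\leftrightarrow H\times\tb$ must be used precisely to convert the geometric finiteness from condition (1) into finite rank in the module sense. The co-separability direction is more algebraic and I expect it to be cleaner, its only subtlety being the verification that the isolating element genuinely lies in the ideal $I(P)$ rather than merely in $\crho(B)$.
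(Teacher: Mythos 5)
Your overall strategy coincides with the paper's: condition (1) of relative almost invariance drives $I(P,B)\subseteq\K(\eb)$, co-separability via Lemma \ref{H_isolation} drives the reverse inclusion, and general rank-one operators are obtained by composing an isolating element with elements of $\crho(B)$ on either side (the paper writes these as $y^*\ph z$). The difference is in mechanism for the first inclusion. Where you approximate elements of $I(P)$ by operators supported near $B^c\times B^c$ (the device from the proof of Theorem \ref{BNWsequence}) and then must convert geometric finiteness into finite rank over $\crho(H)$, the paper avoids approximation with an algebraic identity: setting $P^g=(T^X_g)^*PT^X_g$, every product $T$ of the generators $T^X_g$ and $P$ satisfies $T=TP^g$ for a suitable $g$, so $(1-P)I(P)(1-P)$ is densely spanned by operators $(1-P)TP^g(1-P)$, and one only needs $P^g(1-P)$ --- the projection onto $\ell^2((B\setminus Bg)\cap Xg)$ --- to be compact. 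Condition (1) makes this set $H$-finite, and the projection onto a single coset $Hb$ is the rank-one module operator $\xi\mapsto\ip{\xi,\sigma_b}\sigma_b$, so $P^g(1-P)$ is a finite sum of rank-ones. This buys exactly what you flagged as the main obstacle: no passage from $\epsilon$-approximate supports to module compactness is needed, since the only operators one must show compact are coset projections, which are module operators on the nose. Your route can be repaired (restrict to the dense subalgebra of $I(P)$, whose compressions to $\ell^2(B)$ are left-$H$-invariant module maps with full-coset supports), but the paper's identity is cleaner.

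One assertion in your second half is wrong as stated, though you partly flag it yourself: the isolating element does \emph{not} ``visibly lie in $I(P)$ because it is built from operators involving the complement $B^c$.'' The paper builds the projection $\ph$ onto $\ell^2(H)$ purely inside $\crho(B)$, as the product of $(T^B_g)^*T^B_g$ for $g\in F_1$ and $T^B_e-(T^B_g)^*T^B_g$ for $g\in F_2$, using Lemma \ref{H_isolation}; no factor involves $P$. Membership in $I(P)$ is a separate, necessary argument: choose $g$ with $g^{-1}\in X\setminus B$; left-$H$-invariance of $X\setminus B$ gives $Hg^{-1}\subseteq X\setminus B$, hence $H\subseteq (Xg\setminus Bg)\cap X$, so $\ph=\ph P^g\in I(P)$. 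Without this step you only obtain $\K(\eb)\subseteq\crho(B)$, not $\K(\eb)\subseteq I(P,B)$, so treat it as a required lemma rather than an afterthought.
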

\begin{proof}
For simplicity of exposition we assume that  $H\subseteq B$, and take  $e\in H$ as the base point.

By Remark \ref{intersection}, $\crho(B)$ is contained in the corner $(1-P)\A(1-P)$ and the ideal $I(P,B)$ is contained in the corner $(1-P)I(P)(1-P)$. First we will show that $(1-P)I(P)(1-P)$ is contained in $\K(\eb)$ which will imply that $I(P,B)$ is also contained in the algebra 
of compact operators $\K(\eb)$.

  We denote by $P^g$ the projection $(T^X_g)^* P(T^X_g)$ onto $\ell^2((X\setminus Bg)\cap Xg)$. Products of elements $T^X_g$ and $P$ 
  span a dense subalgebra of $I(P)$ and we claim that any such product $T$ satisfies 
  $
  T= TP^g
  $ for some $g\in \Gamma$.  Indeed, the operator $T$ has the form $\dots PT^X_{g_1}\dots T^X_{g_n}$ for some $g_1, \dots, g_n$ in $\Gamma$. 
  Now let $g=g_1\dots g_n$ then we have
  \[
  PT^X_{g_1}\dots T^X_{g_n}= PT^X_{g_1}\dots T^X_{g_n} (T^X_g)^*T^X_g= T^X_{g_1}\dots T^X_{g_n} (T^X_g)^*PT^X_g
  \]
  where the second equality follows as $T^X_{g_1}\dots T^X_{g_n} (T^X_g)^*$ is a projection commuting with $P$. This establishes the claim. 
  
  It follows that operators of the form $(1-P)TP^g(1-P)$, with $T\in I(P)$, span  a dense subalgebra of $(1-P)I(P)(1-P)$. Since these operators 
  are adjointable operators on $\eb$, to show that they are in $\K(\eb)$ it suffices to check that $P^g(1-P)$ is compact.

  The projection $P^g(1-P)$ is the projection onto $\ell^2((B\setminus Bg)\cap Xg)$. 
 By relative $H$-almost invariance of $B$ and $H$-invariance of $X$, $ (B\setminus Bg)\cap X = HF$ for some finite set $F$. 
  
For $b\in B$ the operator $\ph T^B_{b}$ takes $\ell^2(Hb)$ to $\ell^2(H)$. Hence $\ph T^B_{b}P^{g}(1-P)$ equals $\ph T^B_{b}$ when
$Hb\subseteq HF$, and is otherwise zero. Thus $\sigma_b P^{g}(1-P)=\sigma_b$, for $b\in HF$, and otherwise vanishes. 
Projections onto single cosets are given by rank-one operators on the Hilbert module. Specifically, the operator
$\xi \mapsto \ip{\xi,\sigma_b}\sigma_b$ 
projects onto the coset $Hb$. 
Hence we see that $P^{g}(1-P)$ is a finite sum of rank-one operators as required. 

For the converse inclusion, the key step is to show that $\ph \in I(P,B)$. As an operator on $\eb$, this is the rank one operator defined by 
 $\xi \mapsto \ip{\xi,\sigma_e}\sigma_e$. If $\eta,\zeta$ are the images in $\eb$ of elements $y,z\in \crho(B)$ then the rank-one operator $\xi \mapsto \ip{\xi,\eta}\zeta$ is the composition $y^*\ph z$, and hence must also be in the ideal. Thus having established that $\ph \in I(P,B)$ it will follow that every rank one operator, and hence all of $\K(\eb)$, is contained in $I(P,B)$.

By co-separability Lemma \ref{H_isolation} gives us a formula for $H$, which using the fact that $H\subset B$ we can write 
 \[H= \bigcap\limits_{g\in F_1}(B\cap Bg) \cap \bigcap_{g\in F_2}(B\setminus Bg). 
 \] 
 The projection onto $\ell^2(B\cap Bg)$ 
is given by $(T^B_g)^*T^B_g\in \crho (B) $ while the projection onto $\ell^2(B\setminus Bg)$ is given by $T^B_e -(T^B_g)^*T^B_g\in \crho (B) $. It follows that the projection $\ph$ onto 
$\ell^2(H)$ is an element of $\crho(B)$. 

Now take any $g\in \Gamma$ such that $g^{-1}\in X\setminus B$. Then $Hg^{-1}$ is a subset of $X\setminus B$, so $H\subset (Xg\setminus Bg)\cap X$. It follows that 
$\ph = \ph P^g$, so $\ph$ is also in $I(P)$. This completes the proof that $\ph$ is an element of $I(P,B)$. 
\end{proof}

Combining Theorem \ref{ai-ideal} and Remark \ref{compacts on eb} with Theorem \ref{BNWsequence} we obtain:

\begin{maintheorem}
Let $B$ and $X$ be subspaces of $\Gamma$ such that  $B\subset X\subset \Gamma$. Let also $H$ be the left stabiliser of $B$, $K$ the left stabiliser of $X$ and assume that $H\leq K$. 

 If $B$ is relatively deep in $X$ then  there exists a short exact sequence of $C^*$-algebras:
\[
\begin{CD}
0 @>>> I(P,B) @>>> \crho(B) @>>>\crho(X)
@>>>0,\\
\end{CD}
\]
where the map $\crho(B) \to \crho(X)$ extends the assignment $T^B_g\mapsto T^X_g$, for all $g\in \Gamma$. 

Moreover, if $B$ is relatively $H$-almost invariant in $X$ then the ideal $I(P,B)$ is the algebra of compact operators on the Hilbert module $\eb$. In particular the ideal is Morita equivalent to $\crho(H)$.
\end{maintheorem}

In the special case of a classical almost invariant set in $\Gamma$ we get the corollary:
\begin{corollary}
Let $B$ be an $H$-infinite, $H$-almost invariant set in a group $\Gamma$. Then we obtain a short exact sequence:

\[
0 \to \crho(H)\otimes \K(\ell^2(H\backslash B)) \to \crho(B) \to\crho(\Gamma)
\to0.
\]

\end{corollary}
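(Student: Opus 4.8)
The plan is to derive the corollary as the special case $X=\Gamma$ of the Extension Theorem. With $X=\Gamma$ the left stabiliser of $X$ is $K=\Gamma$, so the hypothesis $H\leq K$ holds automatically, and $\crho(X)=\crho(\Gamma)$ with the quotient map sending $T^B_g$ to $T^\Gamma_g=\rho(g)$, the right regular representation. Everything therefore reduces to verifying the two geometric hypotheses of the Extension Theorem for the pair $B\subset\Gamma$: that $B$ is relatively deep in $\Gamma$, and that $B$ is relatively $H$-almost invariant in $\Gamma$.

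The second hypothesis is immediate. By Lemma \ref{ai_coseparable}, a set with left stabiliser $H$ is (classically) $H$-almost invariant if and only if it is relatively $H$-almost invariant with $X=\Gamma$; since $B$ is $H$-almost invariant by assumption, this condition holds. So the only real work is to check deepness, and it is here that I would invoke the remaining hypothesis, that $B$ is $H$-infinite.

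For deepness I would argue as follows. As $K=\Gamma$ there is a single coset $Kx=\Gamma$, so relative deepness in $\Gamma$ asserts exactly that the function $x\mapsto d(x,B^c)$ is unbounded on $B$, i.e.\ that $B^c$ is not coarsely dense. Since $H$ stabilises $B$, and hence $B^c$, and the metric is left-invariant, this distance function is $H$-invariant: $d(hx,B^c)=d(hx,hB^c)=d(x,B^c)$. Suppose for contradiction that it is bounded by some $r$ on $B$. Because $B$ is $H$-almost invariant its coboundary is $H$-finite, that is, the set of points of $B$ adjacent to $B^c$ is contained in finitely many right cosets $Hg_1\cup\dots\cup Hg_n$; by properness of the metric its $r$-neighbourhood is again $H$-finite. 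But a uniform bound $r$ on $d(x,B^c)$ forces every point of $B$ to lie within $r$ of this coboundary, so that $B$ itself is $H$-finite, contradicting the hypothesis that $B$ is $H$-infinite. Hence $x\mapsto d(x,B^c)$ is unbounded and $B$ is relatively deep in $\Gamma$. I expect this to be the main obstacle, as it is the one step genuinely using $H$-infiniteness and the only point at which the metric/properness assumptions on $\Gamma$ enter; the remaining steps merely assemble cited results.

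With both hypotheses verified, the Extension Theorem yields the short exact sequence $0\to I(P,B)\to\crho(B)\to\crho(\Gamma)\to 0$, and Theorem \ref{ai-ideal} identifies the ideal as $I(P,B)=\K(\eb)$. Finally I would apply Remark \ref{compacts on eb}, which identifies $\K(\eb)$ with $\crho(H)\otimes\K(\ell^2(H\backslash B))$ through a transversal $\tb\leftrightarrow H\backslash B$ for the left action of $H$ on $B$. Substituting this description of the ideal, together with $\crho(X)=\crho(\Gamma)$, into the sequence gives precisely
\[
0\to\crho(H)\otimes\K(\ell^2(H\backslash B))\to\crho(B)\to\crho(\Gamma)\to 0,
\]
as claimed.
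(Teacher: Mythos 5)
Your proposal is correct and follows essentially the same route as the paper, which obtains this corollary by specialising the Extension Theorem (Theorem \ref{BNWsequence} combined with Theorem \ref{ai-ideal} and Remark \ref{compacts on eb}) to $X=\Gamma$, with Lemma \ref{ai_coseparable} supplying relative almost invariance and co-separability. The one step the paper leaves implicit---that $H$-infiniteness forces $B$ to be deep---you verify correctly; note only that the intermediate appeal to the coboundary (which presumes a geodesic word metric) can be bypassed: a uniform bound $d(x,B^c)\le r$ on $B$ gives directly $B\subseteq\bigcup_{|u|\le r}\bigl(B\setminus Bu^{-1}\bigr)$, a finite union of $H$-finite sets by almost invariance and properness of the metric, contradicting $H$-infiniteness.
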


\section{Representations of $\crho(B)$ for convex subspaces}\label{sec:representations}

In this section we consider the problem of constructing representations of a translation algebra $\crho(B)$. This will be applied in later sections. Given a set of generators for the group $\Gamma$, each generator gives an element of the translation algebra $\crho(B)$. The question we consider here is under what conditions can an assignment of an operator to each of these generators, be extended to a representation of $\crho(B)$. The issue is the extent to which the relations from $\Gamma$ are broken in $\crho(B)$.

Let $\Sigma$ be an alphabet equipped with an involution which we denote $\syl\mapsto\syl^{-1}$. Let $R$ be a set of words in the alphabet $\Sigma$ such that for each $\syl\in\Sigma$ we have the relations $\syl\syl^{-1}$ and $\syl^{-1}\syl\in R$. The involution extends to words in the usual way, and we suppose that $R$ is closed under this, and under cyclic permutations. We take $\Gamma$ to be the group defined by the presentation $\langle \Sigma |R\rangle$, and for a word $w$ in the alphabet $\Sigma$ we denote by $\wbar$ the corresponding element of $\Gamma$. A key example, which we will use in Section \ref{free-products} is the presentation of $G\mathop{*}_HS$ where $\Sigma=G\cup S$, and $R$ consists of words of length at most $3$ with syllables taken just from $G$ or just from $S$, and such that the product is the identity as an element of $G$ or $S$ respectively. (Words of length $2$ give the required $\syl\syl^{-1}$ relations, while words of length $3$ implement the products on $G,S$, e.g.\ $g_1 g_2(g_1\cdot g_2)^{-1}=1$, where $g_1,g_2\in G$ and $\cdot$ denotes the multiplication in $G$.)

Let $B$ be a subset of $\Gamma$ containing the identity $e$. We say that a word $w=\syl_1\dots \syl_n$ \emph{stays in $B$} if for $k=1\dots n$ we have $\overline{\syl_1\dots \syl_k}\in B$. Otherwise we say that $w$ \emph{crosses out of $B$}. Here we are thinking of words as giving paths in the Cayley graph, starting at $e$; we remark that moving the basepoint out of $H$ can change whether or not the path stays in $B$. We suppose that $B$ is connected as a subset of the Cayley graph, so that every element of $B$ can be represented by a word staying in $B$.

We say that words $w,w'$ \emph{differ by a relation} if $w'$ can be obtained from $w$ by replacing a subword $v$ of $w$ with a word $v'$ such that the concatenation $v(v')^{-1}$ is in $R$.

\begin{definition}
A subset $B$ of $\Gamma$ is \emph{convex} for the representation $\langle \Sigma |R\rangle$ if whenever $w,w'$ are words staying in $B$ with $\wbar=\wpbar$ there is a sequence of words $w=w_1,w_2,\dots,w_m=w'$ such that each $w_k$ stays in $B$ and is obtained from the preceding word by applying a single relation.
\end{definition}

We define the \emph{boundary of $B$} to be the set of $b\in B$ such that there exists a generator $\syl$ in $\Sigma$ with $b\gen\notin B$. It is easy to see that the boundary is left-$H$-invariant.

The following proposition is motivated in part by the example $\Gamma=G\mathop{*}_HS$ with subspace $B=B_{H,G}$ from Section \ref{sub:Bass-Serre}. With the above presentation a relation stays in $B$ if it is a word in elements of $G$, and crosses out of $B$ if it is a word in elements of $S$, not all in $H$. In the former case the relation holds in $\crho(B)$, that is $T^B_{g_1}T^B_{g_2}T^B_{g_3}=T^B_{g_1g_2g_3}=1$, while in the latter case the relation gives a projection $T^B_{s_1}T^B_{s_2}T^B_{s_3}=1-\ph$ where as usual $\ph$ denotes the projection onto $\crho(H)$ in $\eb$.

\begin{proposition}\label{representations}
Let $\Gamma=\langle \Sigma |R\rangle$, and let $B$ be a convex subset of $\Gamma$ containing $e$, and with the boundary equal to the left-stabiliser, which we denote $H$. Let $\fre$ be a $\crho(H)$-Hilbert module equipped with a direct sum decomposition indexed by $H\backslash \Gamma$, that is $\fre=\bigoplus_{H\backslash \Gamma}\EH\gamma$. Let $\alpha$ be a map from $\Sigma$ to $\B(\fre)$ such that $\alpha(\syl^{-1})=\alpha(\syl)^*$, and extend $\alpha$ multiplicatively to words in $\Sigma$. Suppose:
\begin{enumerate}
\item for all  $\syl\in \Sigma$ and $\gamma\in \Gamma$, the operator $\alpha(\syl)$ takes $\EH{\gamma}$ to $\EH{\gamma\gen}$\,;

\item if $w$ in $R$ is a relation staying in $B$ then $\alpha(w)=1$;

\item there is a projection $p\in \B(\fre)$ with range in $\EH{}$ such that for every $w$ in $R$ crossing out of $B$ we have $\alpha(w)=1-p$;

\item \label{last hypothesis} the subspace $\EH{}$ carries a representation $\hat\rho$ of $\crho(H)$ commuting with $p$, and for each $h\in H$ there exists a word $w$ staying in $B$ with $\wbar=h$ such that $\hat\rho(h)$ is the restriction of $\alpha(w)$ to $\EH{}$.
\end{enumerate}
Then $\alpha$ extends to a representation of $\crho(B)$ on $\fre$, taking $T^B_{\gen}$ to $\alpha(\syl)$ for all $\syl\in \Sigma$.
\end{proposition}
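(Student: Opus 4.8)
The plan is to build the representation $\pi$ first on the dense translation ring of $\crho(B)$ and then extend by continuity. For a word $w=\syl_1\cdots\syl_n$ in the alphabet $\Sigma$ write $T^B_w=T^B_{\overline{\syl_1}}\cdots T^B_{\overline{\syl_n}}$ and set $\pi(T^B_w)=\alpha(w)$, with $\alpha$ extended multiplicatively as in the statement. Since $\alpha(\syl^{-1})=\alpha(\syl)^*$ the assignment respects adjoints and multiplicativity is built in, so the substantive points are that $\pi$ is well defined and bounded. A useful preliminary is that each generator is sent to a partial isometry: the relations $\syl\syl^{-1}$ and $\syl^{-1}\syl$ lie in $R$, so by hypotheses (2) and (3) the operators $\alpha(\syl)\alpha(\syl)^*=\alpha(\syl\syl^{-1})$ and $\alpha(\syl)^*\alpha(\syl)=\alpha(\syl^{-1}\syl)$ are each $1$ or $1-p$, hence projections; in particular each $\alpha(\syl)$ is a contraction. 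Hypothesis (1) guarantees that $\alpha(w)$ carries $\EH\gamma$ into $\EH{\gamma\wbar}$, so every operator in sight respects the decomposition $\fre=\bigoplus_{H\backslash\Gamma}\EH\gamma$ and equalities may be tested one graded component at a time.

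The heart of the matter is \textbf{well-definedness}: that $T^B_w=T^B_{w'}$ forces $\alpha(w)=\alpha(w')$. I would organise this around the summand $\EH{}$, which carries $\hat\rho$ and $p$ and is designed to model the corner $\ph\crho(B)\ph\cong\crho(H)$ inside the faithful module $\eb$ of Definition~\ref{notation for Hilbert modules}. Under the standard action on $\eb$ the vector $\sigma_e$ is cyclic and satisfies $\sigma_e T^B_w=\sigma_{\wbar}$ precisely when $w$ stays in $B$, and $\sigma_e T^B_w=0$ otherwise; this detects whether a word stays in $B$ and, for words that do, reduces the comparison of $w,w'$ to the case $\wbar=\wpbar$. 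Here convexity enters: two words staying in $B$ with $\wbar=\wpbar$ are joined by a chain of words staying in $B$, consecutive ones differing by a single relation of $R$, so it suffices to show that one move $uvy\mapsto uv'y$ with $r=v(v')^{-1}\in R$ leaves $\alpha$ unchanged. Because both words stay in $B$, the subpaths $v,v'$ stay in $B$ from the common intermediate point $\overline u$, and the identity $\alpha(u)\alpha(v)\alpha(y)=\alpha(u)\alpha(v')\alpha(y)$ on each summand is extracted from the value $\alpha(r)\in\{1,\,1-p\}$ supplied by (2) and (3), together with the partial-isometry relations and with hypothesis (4), which pins down the action on the $\EH{}$ summand supporting $p$. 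Words crossing out of $B$ are handled by factoring the corresponding $T^B_w$ through the boundary projections $1-\ph$, which correspond under $\alpha$ to $1-p$ by (3).

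Finally, to pass from the translation ring to $\crho(B)$ I would establish boundedness by comparison with the faithful representation on $\eb$: I would identify the subrepresentation of $\pi$ generated by $\EH{}$ with the representation of $\crho(B)$ induced from $\hat\rho$ through the correspondence $\eb$, using $\eb\cong\crho(H)\otimes\ell^2(\tb)$ from Remark~\ref{compacts on eb}, so that $\pi$ is weakly contained in an induced representation and is therefore $\crho(B)$-contractive. Continuity then extends $\pi$ to all of $\crho(B)$, and by construction $\pi(T^B_\gen)=\alpha(\syl)$.

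I expect the main obstacle to be the relation-move step of the second paragraph when the relation crosses the boundary $H$. There one cannot simply cancel $v$ against $v'$; one must instead check that the discrepancy is \emph{exactly} the projection $p$, matching the identity $T^B_{s_1}T^B_{s_2}T^B_{s_3}=1-\ph$ that holds in $\crho(B)$. Hypotheses (3) and (4) are calibrated precisely for this, and the genuinely delicate point is to reconcile the basepoint-$e$ conditions (2) and (3) with relations applied at interior points $\overline u$, which is where hypothesis (1) and the graded structure do the real work.
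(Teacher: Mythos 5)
Your outline has a genuine gap, and it sits exactly where you flag your own unease: the passage from the translation ring to $\crho(B)$. The appeal to ``weak containment in an induced representation'' is not available here — $\crho(B)$ is not a group $C^*$-algebra, and your $\pi$ is not built from anything known in advance to be norm-continuous. The real difficulty is on the orthogonal complement of the part of $\fre$ generated by $\EH{}p$: there $1-p$ acts as the identity, so by hypotheses (2) and (3) \emph{every} relation of $R$ acts as $1$ and $\alpha$ defines a unitary representation of $\Gamma$. To turn this into a representation of $\crho(B)$ one must extend it to the \emph{reduced} algebra $\crho(\Gamma)$ and compose with the quotient map of Theorem \ref{BNWsequence}; but a general unitary representation of a nonamenable group does not extend to its reduced $C^*$-algebra, and nonamenable $\Gamma$ (free products, HNN extensions) is precisely the intended application. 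The paper supplies the missing structure: it sets $\EpH{b}=\EH{}p\,\alpha(w)$ and $\EpcH{b}=\EH{}(1-p)\,\alpha(w)$ for $w$ staying in $B$ with $\wbar\in Hb$ (well-definedness by convexity — this is where your relation-move argument genuinely belongs, but as a statement about vectors $\xi\in\EH{}$, not an operator identity), proves $\EH{b}=\EpH{b}\oplus\EpcH{b}$ and the invariance of $\Ep$ and $\Epc$ under all $\alpha(\syl)$, and then identifies $\Epc\cong\EpcH{}\otimesh\ehgam$ and $\Ep\cong\EpH{}\otimesh\eb$. On these tensor products the maps $x\mapsto 1\otimes\pi(x)$ and $x\mapsto 1\otimes x$ are manifestly bounded representations of $\crho(B)$ — the first because the representation on $\Epc$ is exhibited as a multiple of the regular representation on the module $\ehgam$, which is the fact your weak-containment claim silently assumes — so boundedness comes for free once the generators are matched.

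There is a secondary gap in the well-definedness step itself. The translation ring is not presented by the monomial relations $T^B_w=T^B_{w'}$, so preserving those equalities does not make a linear extension well defined: you would need $\norm{P(\alpha)}\leq\norm{P(T^B)}$ for every $*$-polynomial $P$, which is exactly the contractivity you deferred. Moreover, equality of monomials is governed by tracks, and distinct tracks can yield equal operators when $B$ is not universal (Appendix \ref{universal}); convexity only chains together words \emph{staying in} $B$ with the same endpoint, so equalities among crossing words are not reached by your relation moves, and your proposed operator identity $\alpha(u)\alpha(v)\alpha(y)=\alpha(u)\alpha(v')\alpha(y)$ on all of $\fre$ is stronger than what hypotheses (2)--(4) directly give. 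The paper's proof is organised around invariant submodules precisely to avoid ever defining a map on the ring and checking its relations; all monomial identities are obtained a posteriori from the two already-bounded representations.
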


\begin{proof}
The strategy of the proof is express $\fre$ as the sum of two $\crho(H)$-Hilbert submodules $\Ep,\Epc$, containing $\EH{}p,\EH{}(1-p)$ respectively, with this decomposition invariant under the operators $\alpha(w)$. We then show that $\Epc$ carries a representation of $\Gamma$ and identify $\Epc$ with $\EH{}(1-p)\otimesh \ehgam$ on which we have the representation $x\mapsto 1\otimes\pi(x)$ of $\crho(B)$, and we identify $\Ep$ with $\EH{}p\otimesh \eb$ on which we have the representation $x\mapsto 1\otimes x$.

Let $b\in B$ and let $w$ be a word staying in $B$ with $\wbar\in Hb$. We set $\EpH{b}=\EH{}p\alpha(w)$ and $\EpcH{b}=\EH{}(1-p)\alpha(w)$. We begin by showing that this is well-defined, i.e.\ it does not depend on the choice of representative $w$.

Let $w$ be a word staying in $B$, let $b=\wbar$, and let $v$ be a relation such that $wv$ stays in $B$. For $\xi\in\EH{}$ we have $\xi\alpha(w)$ in $\EH{b}$, hence if $b\notin H$, so $Hb\neq H$ then $\xi\alpha(w)(1-p)=\xi\alpha(w)$. Thus if $b\notin H$ then $\xi\alpha(w)\alpha(v)=\xi\alpha(w)$ whether or not $v$ stays in $B$. On the other hand if $b=\wbar\in H$ then as $wv$ stays in $B$ we see that $v$ also stays in $B$, so $\alpha(v)=1$. Hence we have shown that for $v$ a relation such that $wv$ stays in $B$ we have $\xi\alpha(wv)=\xi\alpha(w)$.

Applying this to relations of the form $\syl\syl^{-1}$ for $\syl\in \Sigma$, if $w$ is a word staying in $B$ then we can cancel such pairs in $w$ without changing $\xi\alpha(w)$. In particular, as the word $ww^{-1}$ cancels entirely we have $\xi\alpha(w)\alpha(w)^*=\xi\alpha(ww^{-1})=\xi$, for all $w$ staying in $B$.

Let $w,w'$ be words which differ by a relation, that is $w=w_1w_2w_3, w'=w_1w_2'w_3$ where $w_2(w_2')^{-1}\in R$. If $w,w'$ stay in $B$ then the concatenation $w_1w_2(w_2')^{-1}w_2'w_3$ also stays in $B$, so we have
$$\xi\alpha(w_1w_2w_3)=\xi\alpha(w_1w_2(w_2')^{-1}w_2'w_3)=\xi\alpha(w_1w_2'w_3).$$
By convexity of $B$ it follows that for $\xi\in \EH{}$ we have $\xi\alpha(w)=\xi\alpha(w')$ for any two words staying in $B$ with $\wbar=\wpbar$.

Now suppose that $w$, $w'$ are two words staying in $B$ with $H\wbar=H\wpbar$. Let $h=\overline{w'w^{-1}}$. By hypothesis \ref{last hypothesis} there is a word $v$ staying in $B$ such that $\hat\rho(h)$ equals $\alpha(v)$ on $\EH{}$. As $w',vw$ stay in $B$ and $\wpbar=\overline{vw}$ we have $\alpha(w')=\alpha(vw)=\hat\rho(h)\alpha(w)$ on $\EH{}$. The representation $\hat\rho$ commutes with $p$, hence $\EH{}p\alpha(w)=\EH{}p\hat\rho(h)\alpha(w)=\EH{}p\alpha(w')$ and similarly for $1-p$. Thus we see that $\EpH{b}$ and $\EpcH{b}$ are well-defined.

We have seen that if $w$ stays in $B$ then the restriction of $\alpha(w)$ to $\EH{}$ is an isometry, so $\EpH{b}$ is orthogonal to $\EpcH{b}$. Now let $\zeta\in \EH{b}$, and take $w=\syl_1\dots\syl_n$ to be a minimal word such that $b\wbar\in H$. By minimality, for each $k=1\dots n$ we have $b\overline{\syl_1\dots\syl_{k-1}}\notin H$, so $\zeta\alpha(\syl_1\dots\syl_{k-1}v)=\zeta\alpha(\syl_1\dots\syl_{k-1})$ for any relation $v$. Thus $\zeta\alpha(w)\alpha(w^{-1})=\zeta$. We have $\zeta\alpha(w)\in \EH{b\wbar}=\EH{}$ and $\overline{w^{-1}}\in Hb$ with $w^{-1}$ staying in $B$ (by minimality of $w$) hence we see that $\zeta$ is in the sum of $\EpH{b}$ and $\EpcH{b}$. Thus for all $b\in B$, $\EH{b}$ is the orthogonal direct sum of $\EpH{b}$ and $\EpcH{b}$.

Define $\Ep$ to be the sum of $\EpH{b}$ for $b\in B$, and define $\Epc$ to be the sum of those $\EpcH{b}$ for $b\in B$ together with those $\EH{b}$ for $b\notin B$. As $\EH{b}=\EpH{b}\oplus \EpcH{b}$ for each $b\in B$ we have $\fre=\Ep\oplus\Epc$. We now check that $\Ep,\Epc$ are invariant subspaces. First let $\xi\alpha(w)$ be an element of $\Ep$, where $\xi\in \EH{}p$ and $w$ stays in $B$. Let $\syl\in \Sigma$. If $w\syl$ stays in $B$ then certainly $\xi\alpha(w\syl)\in \Ep$. On the other hand if $w\syl$ does not stay in $B$ then $\wbar\in H$, $\xi\alpha(w)\in \EH{}p$, and $\syl\syl^{-1}$ is a relation which does not stay in $B$. Thus $\alpha(a\syl^{-1})=1-p$ so $\xi\alpha(w\syl\syl^{-1})=0$. As $\alpha(\syl)\alpha(\syl)^*$ is a projection $\alpha(\syl)$ is a partial isometry and so $\alpha(\syl\syl^{-1}\syl)=\alpha(\syl)\alpha(\syl)^*\alpha(\syl)=\alpha(\syl)$. Hence $\xi\alpha(w\syl)=\xi\alpha(w\syl\syl^{-1}\syl)=0\in \Ep$. Thus $\alpha(\syl)$ preserves $\Ep$ for all $\syl\in \Sigma$, and so more generally $\alpha(v)$ preserves $\Ep$ for any word $v$.

Now consider $\xi\alpha(w)\in\Epc$, where $\xi\in \EH{}(1-p)$ and $w$ stays in $B$, and let $\syl\in\Sigma$. Again if $w\syl$ stays in $B$ then $\xi\alpha(w\syl)\in \EH{}(1-p)$. If $w\syl$ does not stay in $B$ then $\overline{w\syl}\notin B$, so $\xi\alpha(w\syl)\in \EH{\overline{w\syl}}\subset \Epc$. It remains to check that if $\zeta\in \EH{\gamma}$ for $\gamma\notin B$ then $\zeta\alpha(\syl)\in \Epc$. If $\gamma\gen\notin B$ then this is immediate, so we suppose $\gamma\gen\in B$. This means we must in fact have $\gamma\gen\in H$ while $\gamma$ is not in $B$ and hence $\gen^{-1}$ is also not in $B$. So $\syl^{-1}\syl$ is a relation not staying in $B$ and therefore $\alpha(\syl^{-1}\syl)=1-p$. As before $\alpha(\syl\syl^{-1}\syl)=\alpha(\syl)$ so $\zeta\alpha(\syl)=\zeta\alpha(\syl)(1-p)\in\Epc$ as required.

As the subspace $\Epc$ is invariant it carries a unitary representation of $\Gamma$ defined by $\zeta\cdot \gamma=\zeta\alpha(w)$ where $w$ is any word with $\wbar=\gamma$: this is well-defined as for every relation $v\in R$, the operator $\alpha(v)$ is the identity on $\Epc$. For $\gamma\notin B$ define $\EpcH{\gamma}=\EH{\gamma}$. Then for each $\gamma\in \Gamma$, right multiplication by $\gamma$ induces an isomorphism from $\EpcH{}$ to $\EpcH{\gamma}$. This yields an isomorphism $\EpcH{}\otimesh \ehgam\cong\Epc$ defined by $\xi\otimes \sigma_\gamma\mapsto \xi\cdot\gamma$. It is easy to check that this preserves inner products since the representation is unitary and the subspaces $\EpcH{\gamma}$ are pairwise orthogonal. For $\syl\in \Sigma$, the isomorphism identifies $\alpha(\syl)$ with $1\otimes \rho(\gen)$ and hence the unitary representation of $\Gamma$ on $\Epc$ extends to a representation of $\crho(\Gamma)$. Composing with the quotient map $\pi:\crho(B)\to \crho(\Gamma)$ we obtain a representation of $\crho(B)$ on $\Epc$ extending the map $T^B_{\gen}\mapsto\alpha(\syl)$.

We now consider $\Ep$. We have an isomorphism $\EpH{}\otimesh \eb\cong \Ep$ defined by $\xi\otimes\sigma_b\mapsto \xi\alpha(w)$ where $w$ is a word staying in $B$ with $\wbar=b$: as we have seen above $\xi\alpha(w)$ does not depend on the choice of $w$. Taking $\xi,\xi'\in \EH{}$, words $w,w'$ staying in $B$, and $b=\wbar,b'=\wpbar$ we have
$\ip{\xi\alpha(w),\xi'\alpha(w')}=0$ unless $h=b(b')^{-1}\in H$, in which case $\xi\alpha(w)=\xi\hat\rho(h)\alpha(w')$ and
$$\ip{\xi\alpha(w),\xi'\alpha(w')}=\ip{\xi\hat\rho(h),\xi'\alpha(w')\alpha(w')^*}=\ip{\xi\hat\rho(h),\xi'}.$$
This agrees with the inner product $\ip{\xi\otimes\sigma_b,\xi'\otimes\sigma_{b'}}$ as required. Now let $\xi,\in \EH{}$, $w$ a word staying in $B$, and $b=\wbar$ and take $\syl\in \Sigma$. If $wa$ stays in $B$ then $\xi\alpha(wa)$ corresponds to $\xi\otimes \sigma_{b\gen}$ in the tensor product, whereas if $wa$ does not stay in $B$ then $\overline{wa}=b\gen$ is not in $B$ and we have $\xi\alpha(wa)=0$. Thus we see that the operator $\alpha(a)$ on $\Ep$ is identified with $1\otimes T^B_{\gen}$ on $\EpH{}\otimesh \eb\cong \Ep$. Using the isomorphism of Hilbert modules to transfer the representation $x\mapsto 1\otimes x$ of $\crho(B)$ on $\EpH{}\otimesh \eb$ to $\Ep$ yields a representation of $\crho(B)$ on $\Ep$ extending the map $T^B_{\gen}\mapsto\alpha(\syl)$. This completes the proof.
\end{proof}

\section{Free products with amalgamation} \label{free-products}

In this section we consider groups of the form $\Gamma=G\mathop{*}_HS$, where $G,S$ are discrete groups with common subgroup $H$. As noted in Section \ref{almost-invariance}, the splitting of the group gives rise to an $H$-infinite, $H$-almost invariant subset $B=B_{H,G}$ of $\Gamma$, and hence we obtain a short exact sequence
\[
0 \to \crho(H)\otimes \K \to \crho(B) \to\crho(\Gamma)
\to0.
\]
Recall that this is more canonically written as
\[
0 \to \K(\eb) \to \crho(B) \to\crho(\Gamma)
\to0
\]
where $\eb$ is the left $C^*_\rho(H)$-Hilbert module constructed in Section \ref{Hilbert modules}. The purpose of this section is to compute the $K$-theory of the algebra $\crho(B)$ in this case. Specifically we will show that
$$K_*(\crho(B))\cong K_*(\crho(G))\oplus K_*(\crho(S))$$
hence there is a 6-term exact sequence in $K$-theory computing $K_*(\crho(\Gamma))$ in terms of $K_*(\crho(H)),K_*(\crho(G))$ and $K_*(\crho(S))$. This is inspired by and  generalises Lance's computation in \cite{Lance}.

\bigskip

As the set $B$ is right-$G$-invariant there is a representation $\mu$ of $\crho(G)$ on $\ell^2(B)$. In fact the representation has range in $\crho(B)$, specifically, $\mu(\rho(g))=T^B_g$. Hence we can think of $\mu$ as giving a representation on $\eb$. Likewise, as noted in Section \ref{sub:Bass-Serre}, the subset $B^*=B\setminus H$ is right-$S$-invariant so there is a representation of $\crho(S)$ on $\ell^2(B^*)$. Extending by $0$ on $\ell^2(H)$ we view this as a non-unital representation $\nu$ of $\crho(S)$ on $\ell^2(B)$, defined by $\nu(\rho(s))=T^{B^*}_s\oplus 0_H$ for $s\in S$. Note that $\nu(\rho(e))$ is the projection onto $\ell^2(B^*)$, which equals $(T^B_t)^*T^B_t$ for any $t\in S\setminus H$ (but is not the same as $T^B_e$). Now for all  $s\in S$, 
$\nu(\rho(s))=T^{B^*}_s\oplus 0_H=T^{B}_s (T^B_t)^*T^B_t$, hence  the representation has range in $\crho(B)$, and again we can view this as a representation on $\eb$.

As in \cite{Lance} which considered the special case when $H=\{e\}$, the representations $\mu,\nu$ induce maps on $K$-theory and we will show that
$$\mu_*\oplus\nu_* : K_*(\crho(G))\oplus K_*(\crho(S))\to K_*(\crho(B))$$
is an isomorphism.

We remark that for $h\in H$ the images $\mu(h)$ and $\nu(h)$ are not equal: specifically we have $\nu(h)=(1-\ph)\mu(h)=\mu(h)(1-\ph)$. Since $\nu(e)=1-\ph$, for $h\in H$ and $s\in S$ we have $\mu(h)\nu(s)=\mu(h)(1-\ph)\nu(s)=\nu(h)\nu(s)=\nu(hs)$. Similarly $\nu(s)\mu(h)=\nu(sh)$.

The algebra $\crho(B)$ is generated by the images of the two representations $\mu,\nu$. To see this it suffices to check that for $w=\syl_1\dots\syl_n$ a reduced word we have the equality $T^B_w=T^B_{\syl_1}\dots T^B_{\syl_n}$. This follows from the fact that the boundary of the subset $\calb$ of $\tree$ is a single edge, hence a reduced word cannot cross out of $B$ and back in again. More precisely, given any $b\in B$, if for some $k$ we have $b\syl_1\dots \syl_k\notin B$ then $bw\notin B$. Hence if $\sigma_b T^B_{\syl_1}\dots T^B_{\syl_n}=0$ then $\sigma_b T^B_w=0$. On the other hand whenever $\sigma_b T^B_{\syl_1}\dots T^B_{\syl_n}$ is non-zero then it agrees with $\sigma_b T^B_w$ for any word $w$.

\subsection{Quasi-homomorphisms}
We will now move on to the construction of the maps $K_*(\crho(B))\to K_*(\crho(S))$ and $K_*(\crho(B))\to K_*(\crho(G))$, inverting $\mu_*,\nu_*$. In the spirit of \cite{Lance} this will be done by constructing quasi-homomorphisms, indeed the definition of the former map comes directly from \cite{Lance} (suitably adjusted to account for the amalgamation over $H$).

Let $J$ be an ideal in a $C^*$-algebra $A$. The \emph{double of $A$ over $A/J$}, denoted $D_{A,J}$, is the $C^*$-algebra of pairs $(a,b)$ which agree in the quotient $A/J$. That is, $D_{A,J}=\{(a,b)\in A\times A \mid a-b\in J\}$.

A \emph{quasi-homomorphism} from a $C^*$-algebra $B$ to a $C^*$-algebra $J$ is a pair of $*$-homomorphisms from $B$ to an algebra $A$ containing $J$ as an ideal, which agree in the quotient $A/J$. In other words, a quasi-homomorphism from a $B$ to $J$ is a $*$-homomorphism from $B$ to the double algebra $D_{A,J}$ for some $A\rhd J$. We denote the quasi-homomorphism by $(\phi,\phi'):B\rightrightarrows A\rhd J$.

The algebra $D_{A,J}$ contains $J\times\{0\}$ as an ideal, with quotient $A$. This extension is split by the map $a\mapsto (a,a)$, and hence is split at the level of $K$-theory. Thus identifying $K_*(J)$ with its image in $K_*(D_{A,J})$, a quasi-homomorphism $(\phi,\phi')$ from a $B$ to $J$ induces a map $K_*(B)\to K_*(J)$. This is given explicitly by the formula $[b]\mapsto [(\phi(b),\phi'(b))]-[(\phi'(b),\phi'(b))]$, and we denote the map on $K$-theory by $\phi_*-\phi'_*$.

The quasi-homomorphisms that we build will consist of maps to adjointable operators $\B(\cale)$ on various Hilbert modules $\cale$, with difference landing in the algebra of compacts $\K(\cale)$. The reader who is familiar with $KK$-theory will therefore recognise these as Kasparov triples.

We recall that the construction of $\eb$ applies for any subspace of a group and any subgroup of the left-stabiliser, and that our notation for the $\crho(K)$-Hilbert module of a subspace $X$ is ${}_K\cale_X$, or simply $\cale_X$ when $K$ is the whole of the left-stabiliser.

\begin{remark}
If $B\subseteq X$ are $K$-invariant subsets of $\Gamma$ then ${}_K\cale_B$ is a $\crho(K)$-Hilbert submodule of ${}_K\cale_X$, and moreover it is complemented, with complement ${}_K\cale_{X\setminus B}$.
\end{remark}

Our quasi-homomorphisms will be built using various bijections, the general case of which is described in the following remark.

\begin{remark}\label{right-equivariance}
Let $X$ be a subspace of a group $\Gamma$, with left-stabiliser $H$ and right-stabiliser $K$. For a subgroup $L$ of $\Gamma$ containing $H$, we denote by $L\times_H X$ the quotient of $L\times X$ by the diagonal action $h(k,x)=(kh^{-1},hx)$. Note that this space admits a left action by $L$ and a right action by $K$, where the left and right actions are on the $L$ and $X$ factors respectively. Suppose that the canonical map $L\times_H X\to LX\subseteq \Gamma$ is a bijection. Then this is left-$L$- and right-$K$-equivariant, and it induces a unitary from $\crho(L)\otimesh \cale_X$ to ${}_{L}\cale_{LX}$ taking $\rho(k)\otimes \sigma_x$ to $\sigma_{kx}$. As $X$ is right-$K$-invariant, $\cale_X$ has a unital representation of $\crho(K)$ by right multiplication, and tensoring with $1$ we obtain a representation of $\crho(K)$ on $\crho(L)\otimesh \cale_X$. Similarly ${}_{L}\cale_{LX}$ has a representation of $\crho(K)$ by right multiplication. The left-equivariance of the bijection ensures that the identification $\crho(L)\otimesh \cale_X\cong {}_{L}\cale_{LX}$ is a module map, while right-equivariance means that this conjugates the representations of $\crho(K)$ on these Hilbert modules.
\end{remark}

Applying the above remark we will now construct a quasi-homomorphism from the algebra $\crho(B)$ to $\K(\estimesb)$. Recall that $B$ is the preimage of a halfspace of $\tree$ under the map $\Gamma\to \tree, \gamma\mapsto\gamma G$, specifically we remove the edge $eH$ from $\tree$ and select the half-space $\calb$ containing the vertex $eG$. The action of $S$ on $\tree$ permutes transitively the edges adjacent to $eS$, and so $S\calb$ covers the complement of the vertex $eS$, in particular it covers all $G$-vertices. Hence $SB=\Gamma$. Moreover each $s\in S\setminus H$ takes $eH$ to some other edge and hence translates $B$ entirely off itself. Thus the canonical map $S\times_H B\to SB=\Gamma$ is a bijection. This is illustrated in Figure \ref{fig:bijection} in the case where $G=S=\Z$ and $H$ is trivial. The generators of $G,S$ correspond to horizontal and vertical edges respectively.

\begin{figure}
\mbox{\includegraphics[scale=0.7]{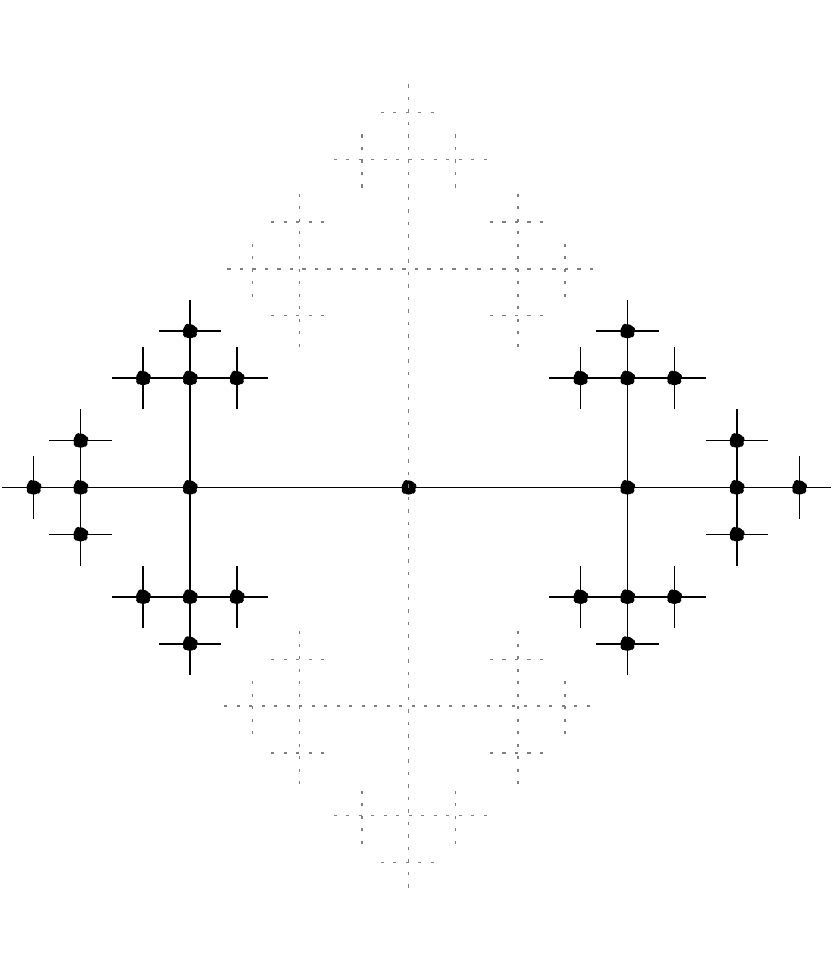}\quad \includegraphics[scale=0.7]{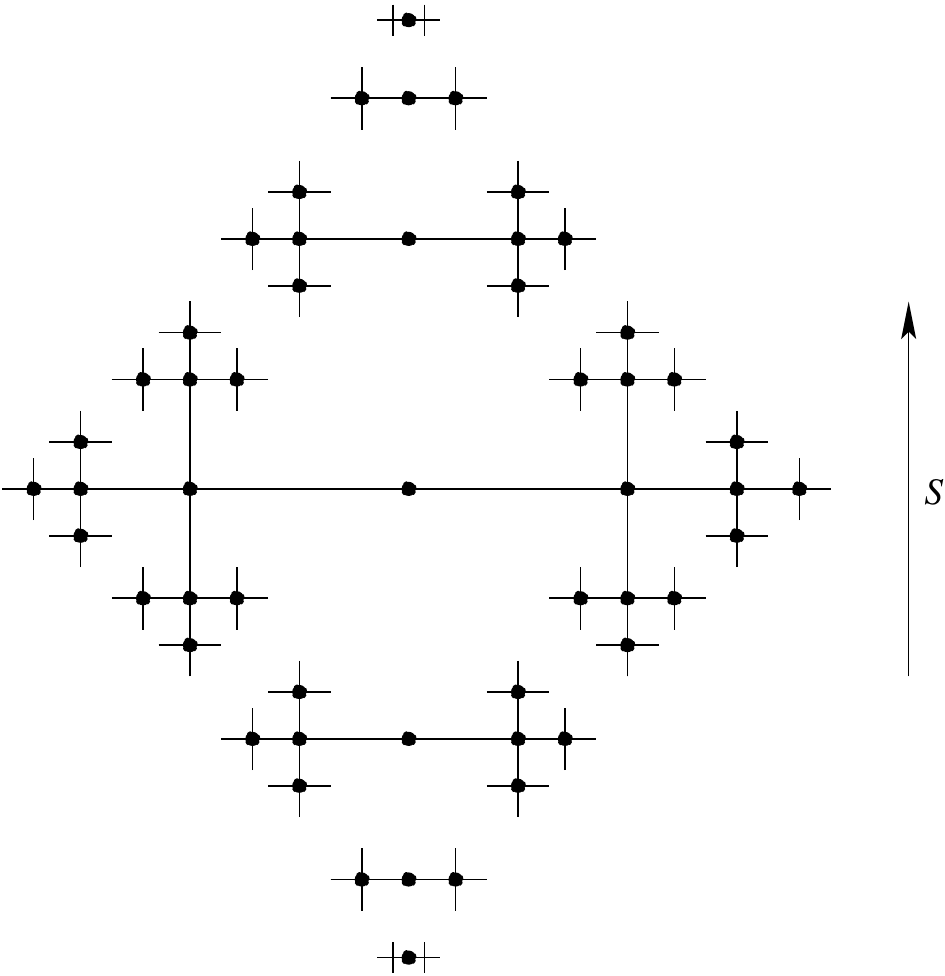}}

\caption{The decomposition of $\Gamma$ as $SB$ for $F_2=\Z\mathop{*}_{\{e\}}\Z$. The left hand figure illustrates $B\subseteq \Gamma$; the right-hand figure shows $\Gamma$ as the union of left-translates of $B$.}
\label{fig:bijection}
\end{figure}

We denote the induced unitary from $\esgam$ to $\estimesb$ by $U$. Note that as these are left Hilbert modules the map $U$ will be written on the right.

An element $x\in\crho(B)$ gives rise to an operator $1\otimes x$ on $\estimesb$, and, via the quotient map $\pi:\crho(B)\to \crho(G)$ of Theorem \ref{BNWsequence}, an operator $\pi(x)$ on $\esgam$. We define $\theta'(x),\theta(x)$ by
\begin{align*}
\theta'(x)&=1\otimes x\\
\theta(x)&=U^*\pi(x)U.
\end{align*}
Clearly $\theta,\theta'$ have range in adjointable operators on $\estimesb$ and we will show that that their difference lies in $\K(\estimesb)$, giving us a quasi-homomorphism.

\begin{remark}
In the case that $H$ is trivial $\eb$ can be identified with $\ell^2(B)$ (with the caveat that $\B(\eb)=\B(\ell^2(B))^\op$) and so $\estimesb=\crho(S)\otimes \ell^2(B)$. Tensoring over $\crho(S)$ with $\ell^2(S)$ we obtain operators $1\otimes \theta(x),1\otimes \theta'(x)$ on the Hilbert space $\ell^2(S\times B)$, and these recover the maps appearing in \cite{Lance}.
\end{remark}

\begin{lemma}\label{free-theta-quasi}
The pair $\theta',\theta$ define a quasi-homomorphism
$$\crho(B)\rightrightarrows \B(\dispestimesb)\rhd \K(\dispestimesb).$$
Moreover, the compositions on $K$-theory
\begin{align*}
K_*(\crho(G))\xrightarrow{\mu_*}&K_*(\crho(B))\xrightarrow{\theta_*-\theta'_*}K_*(\K(\dispestimesb))\\
K_*(\crho(S))\xrightarrow{\nu_*}&K_*(\crho(B))\xrightarrow{\theta_*-\theta'_*}K_*(\K(\dispestimesb))
\end{align*}
are respectively zero and the canonical isomorphism.
\end{lemma}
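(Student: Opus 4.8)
## Proof Plan for Lemma \ref{free-theta-quasi}

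The plan is to first establish that $\theta,\theta'$ genuinely form a quasi-homomorphism by showing $\theta(x)-\theta'(x)\in\K(\estimesb)$ for all $x\in\crho(B)$, and then to compute the two $K$-theoretic compositions separately. For the quasi-homomorphism property, I would exploit the fact that both $\theta$ and $\theta'$ are $*$-homomorphisms defined on all of $\crho(B)$, so it suffices to check that the difference lands in the compacts on a generating set, namely on the operators $T^B_g$; the general case then follows since $\K(\estimesb)$ is an ideal in $\B(\estimesb)$ and the difference of two homomorphisms agreeing modulo an ideal is multiplicative modulo that ideal. The key computation is to unwind the unitary $U:\esgam\to\estimesb$ coming from the bijection $S\times_H B\cong\Gamma$ (Remark \ref{right-equivariance}) and compare $U^*\pi(T^B_g)U$ with $1\otimes T^B_g$. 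Both operators, transported to $\esgam$, act essentially by right multiplication by $g^{-1}$; they disagree precisely on the locus where applying $g$ causes a point to cross the edge $eH$ out of the halfspace, and by $H$-almost invariance of $B$ (Section \ref{sub:Bass-Serre}) this discrepancy is supported on finitely many cosets of $H$, hence is compact. This is the main obstacle: making the finite-rank estimate rigorous requires carefully identifying, for each generator, the cosets on which $\theta(T^B_g)$ and $\theta'(T^B_g)$ differ and recognising the resulting operator as a finite sum of rank-one operators of the form $\xi\mapsto\ip{\xi,\sigma_b}\sigma_{b'}$ on $\estimesb$, in the spirit of the compactness argument in the proof of Theorem \ref{ai-ideal}.

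Having established the quasi-homomorphism, I turn to the composition $\theta_*-\theta'_*$ with $\mu_*$. The cleanest route is to show the composite $(\theta\circ\mu,\theta'\circ\mu)$ is itself zero at the level of $K$-theory by exhibiting that the two maps $\theta\circ\mu,\theta'\circ\mu:\crho(G)\to\B(\estimesb)$ are \emph{homotopic through genuine $*$-homomorphisms}, or better, literally agree. Since $\mu(\rho(g))=T^B_g$ and $\pi(T^B_g)=\rho(g)$ in $\crho(G)$ (by Theorem \ref{BNWsequence}), the map $\theta\circ\mu$ sends $\rho(g)\in\crho(G)$ to $U^*(1\otimes\rho(g)_{\esgam})U$ where $\rho(g)$ here denotes the right regular representation of the $G$-factor acting on $\esgam$. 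Because $B$ is right-$G$-invariant and $\mu$ already has range in $\crho(B)$, I expect that the restriction of $\theta$ to $\mu(\crho(G))$ and the restriction of $\theta'$ coincide as $*$-homomorphisms into $\B(\estimesb)$ modulo compacts via a \emph{degenerate} Kasparov module; that is, the difference $\theta\circ\mu-\theta'\circ\mu$ is not merely compact but the pair $(\theta\circ\mu,\theta'\circ\mu)$ is operator-homotopic to a trivial one because the right $G$-action is unobstructed by the halfspace boundary. Concretely I would verify that both $\theta(T^B_g)$ and $\theta'(T^B_g)$ implement right multiplication by $g$ on $\estimesb$ compatibly, so their difference is zero on the image of $\mu$, giving $(\theta_*-\theta'_*)\circ\mu_*=0$ directly from the explicit formula $[b]\mapsto[(\phi(b),\phi'(b))]-[(\phi'(b),\phi'(b))]$.

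For the composition with $\nu_*$, the goal is to identify $(\theta_*-\theta'_*)\circ\nu_*$ with the canonical isomorphism $K_*(\crho(S))\xrightarrow{\cong}K_*(\K(\estimesb))$; recall $\K(\estimesb)$ is Morita equivalent to $\crho(S)\otimes\crho(H)$ via the tensor factor $\crho(S)$ in $\estimesb$, so there is a canonical identification of $K$-groups. The strategy is to compute $\theta\circ\nu$ and $\theta'\circ\nu$ explicitly. Here the crucial asymmetry is that $\nu(\rho(s))=T^B_s(T^B_t)^*T^B_t$ kills the coset $H$ (since $\nu(e)=1-\ph$), and the left-$S$-action encoded in the tensor factor $\crho(S)$ of $\estimesb$ is precisely what $\nu$ feeds into. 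I would show that $\theta'\circ\nu$ acts diagonally in a way that, paired against $\theta\circ\nu$, produces the standard generator of $KK(\crho(S),\crho(S)\otimes\K)$ implementing Morita equivalence—essentially because the quasi-homomorphism difference isolates the $H$-coset where $\nu$ was truncated. The main obstacle in this part is bookkeeping: one must track how the bijection $S\times_H B\cong\Gamma$ interacts with the non-unital representation $\nu$ and confirm that the resulting index class equals the Morita generator rather than a nonzero multiple or a twisted version. I expect this reduces, after transporting via $U$, to recognising $\theta\circ\nu-\theta'\circ\nu$ as a rank-one projection-valued family indexed by $S\times_H(B\setminus B^*)=S\times_H H$, which is exactly the data of the canonical Morita isomorphism.
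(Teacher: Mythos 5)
Your proposal is correct, and its two $K$-theoretic computations follow exactly the paper's route: the unitary $U$ induced by the bijection $S\times_H B\leftrightarrow\Gamma$, exact agreement $\theta\circ\mu=\theta'\circ\mu$ coming from right-$G$-equivariance of that bijection (Remark \ref{right-equivariance} with $L=S$, $X=B$, $K=G$), and the identification of $\theta(\nu(z))-\theta'(\nu(z))$ as the rank-one operator $\xi\mapsto\ip{\xi,1\otimes\sigma_e}(z\otimes\sigma_e)$ supported on the complement $\crho(S)\otimesh\crho(H)\cong\crho(S)$ of $\estimesbs$ (Remark \ref{right-equivariance} with $L=K=S$, $X=B^*$), which is precisely the corner embedding implementing the canonical isomorphism. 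Where you genuinely diverge is the quasi-homomorphism property itself: you check compactness generator by generator, using $H$-almost invariance (the set $B\setminus Bg^{-1}$ is $H$-finite) to exhibit $\theta(T^B_g)-\theta'(T^B_g)$ as a finite sum of module rank-one operators $\xi\mapsto\ip{\xi,1\otimes\sigma_b}(\rho(s_1)\otimes\sigma_{b_1})$, one for each coset $Hb\subseteq B\setminus Bg^{-1}$, where $bg=s_1b_1$ under the bijection. That estimate is correct and rigorous-izable, but the paper avoids it entirely: having computed the differences exactly on the images of $\mu$ and $\nu$, it invokes the fact, established at the start of Section \ref{free-products}, that these images generate $\crho(B)$, so the two $K$-theory computations double as the compactness check. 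Your route buys independence from that generation statement at the cost of an extra almost-invariance argument; the paper's is shorter.

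Three corrections to tighten your write-up. First, your hedging in the $\mu$-composition paragraph (``homotopic through genuine $*$-homomorphisms'', ``degenerate Kasparov module'') is unnecessary and slightly misleading: as you eventually conclude, $\theta\circ\mu$ and $\theta'\circ\mu$ agree \emph{on the nose}, since $\pi(\mu(y))$ is right multiplication by $y$ on $\esgam$ and $U$ intertwines this with $1\otimes\mu(y)$; note also that $\pi$ takes values in $\crho(\Gamma)$, not $\crho(G)$. Second, $\K(\estimesb)$ is Morita equivalent to $\crho(S)$, not to $\crho(S)\otimes\crho(H)$: by Remark \ref{compacts on eb} one has $\eb\cong\crho(H)\otimes\ell^2(H\backslash B)$, so $\estimesb\cong\crho(S)\otimes\ell^2(H\backslash B)$ and $\K(\estimesb)\cong\crho(S)\otimes\K(\ell^2(H\backslash B))$; this is what makes the target of the canonical isomorphism correct, and your own final identification via $S\times_H H\cong S$ is consistent with it. Third, the difference $\theta(\nu(z))-\theta'(\nu(z))$ is not a ``projection-valued family'' but the compression of $z$ itself to the rank-one corner; the point is that $\theta(\nu(z))$ decomposes as a direct sum of $\theta'(\nu(z))$ on $\estimesbs$ and the corner copy of $z$, so the quasi-homomorphism splits off the standard stability inclusion, which induces the canonical isomorphism on $K$-theory.
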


\begin{proof}
We begin by noting that $\theta(\mu(y))=\theta'(\mu(y))$ for $y\in \crho(G)$: by definition $\theta(\mu(y))=U^*\pi(\mu(y))U$ and $\pi(\mu(y))$ is simply $y$ action by right multiplication on $\esgam$. Since $\theta'(\mu(y))=1\otimes \mu(y)$, the equality follows from Remark \ref{right-equivariance}.

We will now show that for $z\in \crho(S)$, the difference $\theta(\nu(z))-\theta'(\nu(z))$ is the rank-one operator $\xi\mapsto \ip{\xi,1\otimes \sigma_e}(z\otimes \sigma_e)$. Since the images of $\mu,\nu$ generate $\crho(B)$ we will then conclude that the difference $\theta-\theta'$ always lands in $\K(\estimesb)$, hence $(\theta,\theta')$ defines a quasi-homomorphism. 

Now apply Remark \ref{right-equivariance} with $L=K=S$ and $X=B^*=B\setminus H$. We have a bijection $S\times_H B^*\leftrightarrow \Gamma\setminus S$, restricting the bijection $S\times_H B\leftrightarrow \Gamma$. The new bijection is right-$S$-equivariant and the corresponding unitary is a restriction of $U$ conjugating the right-multiplication representation of $\crho(S)$ to $1\otimes \nu$. Hence $\theta(\nu(z))=U^*zU$ preserves the subspace $\estimesbs$ and the restriction to this subspace coincides with the restriction of $\theta'(\nu(z))$.

Since the complement of $\ebs$ in $\eb$ is $\crho(H)$, the complement of $\estimesbs$ in $\estimesb$ is $\crho(S)\otimesh\crho(H)\cong \crho(S)$; this isomorphism is implemented by the restriction of $U$. Since $\nu(z)$ vanishes on $\crho(H)\subset\eb$ we see that $\theta'(\nu(z))$ is zero on this subspace, while the restriction of $\pi(\nu(z))$ to $\crho(S)$ is $z$. Using $U$ to identify the space with $\crho(S)\otimesh\crho(H)$ takes $z$ to the rank-one operator $\xi\mapsto \ip{\xi,1\otimes \sigma_e}(z\otimes \sigma_e)$. We thus see that as an operator on $\estimesb$ the difference $\theta(\nu(z))-\theta'(\nu(z))$ is simply this rank-one operator.

The observation that $\theta\circ\mu=\theta'\circ\mu$ ensures the vanishing of the composition $(\theta_*-\theta'_*)\circ\mu_*$ on $K$-theory. On the other hand the difference $\theta\circ\nu-\theta'\circ\nu$ takes $z\in\crho(S)$ to the rank-one operator $\xi\mapsto \ip{\xi,1\otimes \sigma_e}(z\otimes \sigma_e)$. This induces the canonical isomorphism $K_*(\crho(S))\to K_*(\K(\estimesb))$ as required.
\end{proof}

We now move on to the question of constructing a map from $K_*(\crho(B))$ to $K_*(\crho(G))$. Again we will do this by constructing a quasi-homomorphism. As remarked above, there is an asymmetry between the two groups $G,S$ created by the choice of basepoint $eG$ in the Bass-Serre tree yielding the right-$G$-invariant subspace $B$. For $B'$ the right $S$-invariant subspace obtained from choosing $eS$ intead we have a map $K_*(\crho(B'))\to K_*(\crho(G))$ which is directly analogous to the above map $K_*(\crho(B))\to K_*(\crho(S))$. The idea is to recreate $B'$ from $B$ and hence obtain a map $K_*(\crho(B))\to K_*(\crho(G))$.

The set $B'$ is `almost' the complement of $B$, specifically we have $\Gamma=B\cup B'$ and the intersection is $H$. Note that both $B,B'$ are left $H$-invariant and hence $B\sqcup B'$ admits a left action of $H$. Corresponding to this we take the direct sum of Hilbert modules $\eb\oplus \ebp$.

We have a bijection $G\times_HB'\leftrightarrow\Gamma$ given by $(g,b)\mapsto gb$. This is the direct analogue of the bijection $S\times_HB\leftrightarrow\Gamma$, and it is left-$G$- and right-$S$-equivariant. By Remark \ref{right-equivariance} we obtain a unitary isomorphism from $\eggam$ to $\egtimesb$, which we denote by $V_1$. Note that right multiplication by $\pi(x)\in\crho(\Gamma)$ defines an operator on $\eggam$. We use this to define $\psi':\crho(B)\to \egtimesbbp$ by
$$\psi'(x)=1\otimes x+V_1^*\pi(x)V_1.$$

We can also include $\Gamma$ into $B\sqcup B'$, in fact we can do this in two ways. Decomposing $\Gamma$ as $B\sqcup B^c$ gives one inclusion into $B\sqcup B'$, while decomposing $\Gamma$ as $(B')^c\sqcup B'$ gives a second inclusion (differing on $H$). Recall that $(B')^c=B^*=B\setminus H$. Note that the first decomposition of $\Gamma$ is into right-$G$-invariant sets, the latter into right $S$-invariant sets; the decompositions are both left-$H$-invariant. These inclusions yield two isometries from $\ehgam$ into $\eb\oplus\ebp$ which we denote by $V_2^G,V_2^S$ respectively. We now define two maps $\psi^G$, $\psi^S$ by
\begin{align*}
\psi^{G}(x)&=1\otimes (V_2^{G})^*\pi(x)V_2^{G}\\
\psi^{S}(x)&=1\otimes (V_2^{S})^*\pi(x)V_2^{S}.
\end{align*}
We remark that $\psi^G,\psi^S$ are very closely related, indeed $\psi^G(x)$ is obtained from $\psi^S(x)$ by conjugating by the operator $1\otimes v$ where $v$ is the unitary involution on $\eb\oplus\ebp$ which exchanges $\crho(H)\oplus0$ with $0\oplus\crho(H)$ while leaving $\ebs\oplus\ebc$ fixed.

\begin{lemma}\label{free-psi-quasi}
The pair $\psi',\psi^S$ define a quasi-homomorphism
$$\crho(B)\rightrightarrows \B(\dispegtimesbbp)\rhd \K(\dispegtimesbbp).$$
Moreover, the compositions on $K$-theory
\begin{align*}
K_*(\crho(G))\xrightarrow{\mu_*}&K_*(\crho(B))\xrightarrow{\psi'_*-\psi^S_*}K_*(\K(\dispegtimesbbp))\\
K_*(\crho(S))\xrightarrow{\nu_*}&K_*(\crho(B))\xrightarrow{\psi'_*-\psi^S_*}K_*(\K(\dispegtimesbbp))
\end{align*}
are respectively the canonical isomorphism and zero.
\end{lemma}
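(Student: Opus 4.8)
The plan is to establish the two $K$-theoretic conclusions separately, after first checking that $(\psi',\psi^S)$ is a quasi-homomorphism. Both $\psi'$ and $\psi^S$ are $*$-homomorphisms: writing $\egtimesbbp=(\crho(G)\otimesh\eb)\oplus(\crho(G)\otimesh\ebp)$, the map $\psi'$ is the direct sum of the tautological representation $1\otimes(\,\cdot\,)$ on the first summand and the unitary conjugate $V_1^*\pi(\,\cdot\,)V_1$ of the quotient on the second, while $\psi^S$ (and likewise $\psi^G$) is a compression of $\pi$ by the isometry $V_2^S$ and is multiplicative precisely because $V_2^S$ is an isometry. Once I show that $\psi'-\psi^S$ carries the generating set $\mu(\crho(G))\cup\nu(\crho(S))$ into $\K(\egtimesbbp)$, the quasi-homomorphism property follows on all of $\crho(B)$: the two composites with the quotient map $\B(\egtimesbbp)\to\B(\egtimesbbp)/\K(\egtimesbbp)$ are $*$-homomorphisms agreeing on generators, hence equal. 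The key device is that $\psi'$ is adapted to the right-$S$-equivariant bijection behind $V_1$, so the composite with $\nu$ cancels exactly, whereas the composite with $\mu$ is transparent only after replacing $\psi^S$ by its twin $\psi^G$.

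For the composite with $\nu$ I claim $\psi'(\nu(z))=\psi^S(\nu(z))$ on the nose; this is the exact-cancellation step of Lemma \ref{free-theta-quasi} with the roles of $G$ and $S$ interchanged. Since $\nu(z)$ is supported on $\ebs\subseteq\eb$ and vanishes on the complementary copy of $\crho(H)$, the first summand of $\psi'(\nu(z))$ is $1\otimes\nu(z)$; on the second summand, the bijection $G\times_H B'\leftrightarrow\Gamma$ underlying $V_1$ is right-$S$-equivariant by Remark \ref{right-equivariance}, so $V_1^*\pi(\nu(z))V_1$ is $1\otimes$(right multiplication by $z$ on $\ebp$). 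On the other hand, decomposing $\ehgam$ through $\Gamma=B^*\sqcup B'$ as $\ebs\oplus\ebp$ and recalling that $V_2^S$ embeds this as $\ebs\oplus\ebp\subseteq\eb\oplus\ebp$ with complement $\crho(H)\oplus0$, the operator $\psi^S(\nu(z))$ is $1\otimes$(right multiplication by $z$) on $\ebs\oplus\ebp$ and zero on the complement. These descriptions coincide, so the difference vanishes and $(\psi'_*-\psi^S_*)\circ\nu_*=0$.

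For the composite with $\mu$ I pass to $\psi^G$. The decomposition $\Gamma=B\sqcup B^c$ behind $V_2^G$ is right-$G$-invariant, hence compatible with $\mu$, and the restriction of $G\times_H B'\leftrightarrow\Gamma$ to $G\times_H B^c\leftrightarrow\Gamma\setminus G$ is right-$G$-equivariant. Using these I expect to check that $\psi'(\mu(y))$ and $\psi^G(\mu(y))$ agree on $\crho(G)\otimesh\eb$ and on $\crho(G)\otimesh\ebc$, where each equals $1\otimes$(right multiplication by $y$), so that their difference is supported on the single summand $\crho(G)\otimesh(0\oplus\crho(H))$ corresponding to the coset $H\subseteq B'$ deleted by the restriction. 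There $\psi^G(\mu(y))=0$, while $\psi'(\mu(y))$ is the right regular representation on $\crho(G)\otimesh(0\oplus\crho(H))\cong\crho(G)$. Thus $y\mapsto\psi'(\mu(y))-\psi^G(\mu(y))$ is the corner embedding $\crho(G)\cong\crho(G)\otimesh(0\oplus\crho(H))\hookrightarrow\K(\egtimesbbp)$, which has range in the compacts and induces the canonical isomorphism $K_*(\crho(G))\to K_*(\K(\egtimesbbp))$.

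Finally I transfer this back to $\psi^S$. Since $\psi^G=\mathrm{Ad}(1\otimes v)\circ\psi^S$, where $v$ transposes $\crho(H)\oplus0$ and $0\oplus\crho(H)$ and fixes $\ebs\oplus\ebc$, the difference $\psi^G(x)-\psi^S(x)$ is supported on these two rank-one summands and is compact for every $x$; together with the previous two paragraphs this shows $\psi'-\psi^S$ lands in $\K(\egtimesbbp)$ on the generators, so $(\psi',\psi^S)$ is a quasi-homomorphism and, by the same estimate, so is $(\psi',\psi^G)$. Moreover $v$ differs from the identity only on the two rank-one summands, on which it acts as $\bigl(\begin{smallmatrix}0&1\\1&0\end{smallmatrix}\bigr)$; hence $1\otimes v$ lies in $1+\K(\egtimesbbp)$ and is joined to the identity by a norm-continuous path of unitaries in $1+\K(\egtimesbbp)$, so the quasi-homomorphisms $(\psi',\psi^S)$ and $(\psi',\psi^G)$ are homotopic and induce the same map on $K$-theory. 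Combining, $(\psi'_*-\psi^S_*)\circ\mu_*=(\psi'_*-\psi^G_*)\circ\mu_*$ is the canonical isomorphism and $(\psi'_*-\psi^S_*)\circ\nu_*=0$, as required. I expect the main obstacle to be the verification in the third paragraph: one must confirm that the bulk right-$G$-regular parts of $\psi'(\mu(y))$ and $\psi^G(\mu(y))$ cancel and that the entire discrepancy localizes to the summand $\crho(G)\otimesh(0\oplus\crho(H))$. The delicate point is the boundary bookkeeping at the coset $H$, where the $\crho(H)$-coefficient regular action realized inside $\ehgam$ must be matched against the $\crho(G)$-coefficient absorption realized inside $\eggam$ through $V_1$, and one must check that the survivor is precisely the corner embedding rather than merely some compact inducing an isomorphism.
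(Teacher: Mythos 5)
Your proposal is correct and follows essentially the same route as the paper: exact cancellation $\psi'(\nu(z))=\psi^S(\nu(z))$ via the right-$S$-equivariance behind $V_1$ and the decomposition $\Gamma=B^*\sqcup B'$, then the computation of $\psi'(\mu(y))-\psi^G(\mu(y))$ localising to the summand $\crho(G)\otimesh(0\oplus\crho(H))$ (the paper records the survivor as the rank-one family $\xi\mapsto \ip{\xi,1\otimes(0\oplus\sigma_e)}(y\otimes(0\oplus\sigma_e))$, which is your corner embedding), and finally the transfer $\psi^G\leftrightarrow\psi^S$ via the involution $1\otimes v$ being a compact perturbation of the identity. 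Your only deviation is cosmetic: you identify the $K$-theory maps of $(\psi',\psi^S)$ and $(\psi',\psi^G)$ by a rotation homotopy of unitaries in $1+\K$, where the paper instead observes that the pair $(1,1\otimes v)$ lies in the double algebra --- two standard formulations of the same fact.
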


\begin{proof}
As in Lemma \ref{free-theta-quasi}, the cancellation between $\psi$ and $\psi'$ comes from applying Remark \ref{right-equivariance}, though here we need to consider restriction to the subspaces $\crho(G)\otimesh\eb$ and $\crho(G)\otimesh\ebp$ separately.

We begin by showing that $\psi^S(\nu(z))=\psi'(\nu(z))$ for $z\in\crho(S)$. By definition $\psi^S(\nu(z))=(V_2^{S})^*\pi(\nu(z))V_2^{S}$, and the operator $\pi(\nu(z))$ on $\eggam$ is given by right-multiplication by $z$. The restriction of $(V_2^{S})^*zV_2^{S}$ to $\eb$ is then given by restricting $z$ to $\ebs$ and then extending by $0$ to $\eb$. This is $\nu(z)$, so the restriction of $\psi^S(\nu(z))$ to $\crho(G)\otimesh\eb$ is $1\otimes \nu(z)$ which equals the restriction of $\psi'(\nu(z))$.

Next consider the restriction of $\psi'(\nu(z))$ to $\crho(G)\otimesh\ebp$. By Remark \ref{right-equivariance} applied to $L=G$, $X=B'$, $K=S$, the unitary $V_1$ conjugates right-multiplication by $z$ to $1\otimes \nu'(z)$ where $\nu'$ is the unital representation of $\crho(S)$ on the $S$-invariant subspace $B'$ of $\Gamma$. On the other hand $\psi^S(\nu(z))=1\otimes (V_2^{S})^*zV_2^{S}$ and the restriction of $(V_2^{S})^*zV_2^{S}$ to $\ebp$ is precisely $\nu'(z)$, hence we see that the restrictions of $\psi^S(\nu(z))$ and $\psi'(\nu(z))$ again agree. Thus $\psi^S(\nu(z))=\psi'(\nu(z))$ for $z\in\crho(S)$.

We now move on to $\mu(y)$ for $y\in \crho(G)$. First we will compute the difference $\psi'(\mu(y))-\psi^G(\mu(y))$, and then we will relate this to $\psi^S$. The restriction of $(V_2^{S})^*\pi(\nu(y))V_2^{S}=(V_2^{S})^*yV_2^{S}$ to $\eb$ is simply $y$, hence the restrictions of $\psi'(\mu(y)),\psi^G(\mu(y))$ to $\crho(G)\otimesh\eb$ are both $1\otimes y$. In particular they agree.

On $\crho(G)\otimesh\ebp$ we further restrict to $\crho(G)\otimesh\ebc$, and apply Remark \ref{right-equivariance} for $L=K=G$ and $X=B^c$. As usual this shows that the restrictions agree. It remains to consider the restrictions to the Hilbert submodule $\crho(G)\otimesh(0\oplus \crho(H))$ of $\egtimesbbp$. The range of $V_2^G$ misses $0\oplus \crho(H)$ hence $\psi^G(\mu(y))$ vanishes on this subspace. The subspace is isomorphic to $\crho(G)$, with the isomorphism implemented by the restriction of $V_1$, hence the action of $\psi'(\mu(y))$ on this subspace is given by $y$. Thus $\psi'(\mu(y))-\psi^G(\mu(y))$ is the rank-one operator
$$\xi\mapsto \ip{\xi,1\otimes (0\oplus\sigma_e)}(y\otimes (0\oplus\sigma_e)).$$

We have therefore shown that the image of $\psi'(\mu(y))-\psi^G(\mu(y))$ is compact for $y\in \crho(G)$. Conjugating $\psi^G(\mu(y))$ by $1\otimes v$ yields $\psi^S(\mu(y))$, hence as $v$ is a compact perturbation of the identity we deduce that $\psi^G(\mu(y))-\psi^S(\mu(y))$ and hence $\psi'(\mu(y))-\psi^S(\mu(y))$ are also compact. Thus $(\psi',\psi^S)$ defines the required quasi-homomorphism.

We have seen that $\psi',\psi^S$ agree on the image of $\nu$, yielding vanishing of the composition $(\psi'_*-\psi^S_*)\circ\nu_*$ on $K$-theory. The pair $(1,1\otimes v)$ lies in the double algebra for the compact operators inside the adjointable operators on the module $\egtimesbbp$. Hence $(\psi',\psi^G)$ is also a quasi-homomorphism, and defines the same map on $K$-theory as $(\psi',\psi^S)$. The computation of $(\psi'-\psi^G)\circ\mu$ shows that the composition $(\psi'_*-\psi^G_*)\circ\mu_*$ induces the canonical isomorphism on $K$-theory, hence the same holds replacing $\psi^G$ with $\psi^S$, which completes the proof.
\end{proof}

In view of the observation that $\psi'_*-\psi^S_*=\psi'_*-\psi^G_*$ we will drop the $S,G$ superscripts from our notation except when we need to be explicit in computations.

\subsection{The isomorphism on $K$-theory}

Combining Lemmas \ref{free-theta-quasi},\ref{free-psi-quasi} we deduce that the map $\mu_*\oplus\nu_*:K_*(\crho(G))\oplus K_*(\crho(S))\to K_*(\crho(B))$ is a split injection, with left inverse given by $(\psi'_*-\psi_*)\oplus(\theta_*-\theta'_*)$. To complete the computation of $K_*(\crho(B))$ it remains to prove that the quasi-homomorphisms also provide a right inverse. We begin by identifying the composition on $K$-theory with an explicit quasi-homomorphism.

We define $\phi,\phi':\crho(B)\to\B(\crho(B)\otimesh(\eb\oplus\ebp))$ as follows. Note that $\nu(1)$ is the projection in $\crho(B)$ corresponding to the subspace $B^*$. Viewing $\crho(B)$ as a Hilbert module over itself, $\crho(B)\nu(1)$ is a complemented submodule and is isomorphic to $\crho(B)\otimess\crho(S)$, where $\crho(S)$ acts on $\crho(B)$ via the representation $\nu$. Moreover, as $B^*$ is right-$H$-invariant, $\nu(1)$ commutes with the representation of $\crho(H)$ on $B$, and hence defines an adjointable operator $\nu(1)\otimes 1$ on $\crho(B)\otimesh \eb$, $(a\otimes \xi)(\nu(1)\otimes 1)=a\nu(1)\otimes \xi$.

We recall that $\theta(x)=U^*\pi(x)U$ is an operator on $\estimesb$, hence we can form the operator $1\otimes\theta(x)$ on $\crho(B)\otimess(\estimesb)\cong\crho(B)\nu(1)\otimesh\eb$. As the latter is a complemented subspace of $\crho(B)\otimesh\eb$ we can extend the operator to $\crho(B)\otimesh\eb$, defining it to be $p_0\otimes x$ on the complement $\crho(B)p_0\otimesh \eb$, where $p_0$ is the complementary projection $1-\nu(1)$.

Similarly we have the operator $V_1^*\pi(x)V_1$ on $\egtimesb$ (this is the restriction of $\psi'(x)$ to $\egtimesb$) and we form the operator $1\otimes V_1^*\pi(x)V_1$ on $\crho(B)\otimesg(\egtimesb)\cong \crho(B)\otimesh\ebp$. Finally we have $(V_2^S)^*\pi(x)V_2^S$ on $\eb\oplus\ebp$, and we form $1\otimes (V_2^S)^*\pi(x)V_2^S$ on $\crho(B)\otimesh(\eb\oplus\ebp)$. We now define
$$\begin{matrix}
\phi(x)&=& \Bigl(p_0\otimes x\,\,\oplus\,\,1\otimes(U_1^*\pi(x)U_1)\Bigr) \,\oplus\,1\!\otimes(V_1^*\pi(x)V_1), \\
\phi'(x)&=& 1\otimes (V_2^S)^*\pi(x)V_2^S.
\end{matrix}$$

We remark that for $h\in H$ we have $\phi(T^B_h)=1\otimes (T^B_h\oplus T^{B'}_h)$.
To see this first we note that $U_1^*\pi(T^B_h)U_1=\theta(T^B_h)$ and as $T^B_h$ is in the image of $\mu$ this agrees with $\theta'(T^B_h)=1\otimes T^B_h$, cf.\ Lemma \ref{free-theta-quasi}. Thus $1\otimes(U_1^*\pi(T^B_h)U_1)=\nu(1)\otimes T^B_h$. Similarly $V_1^*\pi(T^B_h)V_1$ is the restriction of $\psi'(T^B_h)$ to $\crho(G)\otimesh\ebp$. The operators $\psi'(T^B_h), \psi'(T^{B^*}_h)$ agree on this submodule, so this is $1\otimes \nu'(\rho(h))=1\otimes T^{B'}_h$, cf.\ Lemma \ref{free-psi-quasi}.

\begin{lemma}\label{the composition}
The pair $(\phi,\phi')$ defines a quasi-homomorphism,
$$\crho(B)\rightrightarrows \B(\crho(B)\otimesh(\eb\oplus\ebp))\rhd \K(\crho(B)\dispotimesh(\eb\oplus\ebp)).$$
The induced map on $K$-theory is the composition
\begin{equation}\label{free-composition}
K_*(\crho(B))\to K_*(\crho(G))\oplus K_*(\crho(S))\to K_*(\crho(B))
\end{equation}
where the first map is given by $(\psi'_*-\psi_*)\oplus(\theta_*-\theta'_*)$, and the second by $\mu_*\oplus \nu_*$.
\end{lemma}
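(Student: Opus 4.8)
The plan is to recognise $(\phi,\phi')$ as a combination of the two quasi-homomorphisms already analysed in Lemmas~\ref{free-theta-quasi} and~\ref{free-psi-quasi}, each inducted up along one of the structure maps $\mu,\nu$, and then to read off the induced map on $K$-theory using additivity of the difference construction. Throughout I write $p_0=1-\nu(1)$.

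First I would fix the orthogonal decomposition of the ambient module. Since $\crho(B)\nu(1)\cong\crho(B)\otimess\crho(S)$ as a complemented submodule of $\crho(B)$, we may write
$$\crho(B)\otimesh(\eb\oplus\ebp)=\bigl(\crho(B)p_0\otimesh\eb\bigr)\oplus\bigl(\crho(B)\nu(1)\otimesh\eb\bigr)\oplus\bigl(\crho(B)\otimesh\ebp\bigr),$$
with the middle summand identified with $\crho(B)\otimess\estimesb$. Unwinding the definitions, $\phi$ is block diagonal for this decomposition, acting as $p_0\otimes x$, then $1\otimes\theta(x)$, then $1\otimes V_1^*\pi(x)V_1$ on the three summands. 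I would compare this with the $\mu$-induction $1\otimes\psi'$ on $\crho(B)\otimesg\egtimesbbp\cong\crho(B)\otimesh(\eb\oplus\ebp)$, whose three blocks are $p_0\otimes x$, $1\otimes\theta'(x)$ and $1\otimes V_1^*\pi(x)V_1$ respectively. Hence $\phi$ and $1\otimes\psi'$ differ only on the middle summand, where they are exactly $1\otimes\theta(x)$ and $1\otimes\theta'(x)$, while $\phi'=1\otimes\psi^S$ is precisely the $\mu$-induction of $\psi^S$.

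The assertion that $(\phi,\phi')$ is a quasi-homomorphism then reduces to two compactness facts. The difference $\phi-1\otimes\psi'$ is $1\otimes(\theta-\theta')$ on $\crho(B)\otimess\estimesb$, the $\nu$-induction of the compact-valued difference of Lemma~\ref{free-theta-quasi}; and $1\otimes\psi'-\phi'=1\otimes(\psi'-\psi^S)$ is the $\mu$-induction of the compact-valued difference of Lemma~\ref{free-psi-quasi}. The point needing care is that inducting along a $*$-homomorphism $\lambda\colon D\to\crho(B)$ (here $\lambda=\nu$ or $\mu$) carries compacts to compacts: this holds because $\crho(B)$, viewed as a module over itself, has $\K(\crho(B))=\crho(B)\supseteq\lambda(D)$, so that $1\otimes\K(\cale)\subseteq\K(\crho(B)\otimes_\lambda\cale)$. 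Adding the two contributions gives $\phi(x)-\phi'(x)\in\K(\crho(B)\otimesh(\eb\oplus\ebp))$.

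For the computation of the $K$-theory map I would introduce the intermediate homomorphism $\phi''=1\otimes\psi'$ and exploit additivity of the difference map, $\phi_*-\phi'_*=(\phi_*-\phi''_*)+(\phi''_*-\phi'_*)$. The pair $(\phi'',\phi')=(1\otimes\psi',1\otimes\psi^S)$ is the full $\mu$-induction of $(\psi',\psi^S)$, so its induced map is $\mu_*\circ(\psi'_*-\psi^S_*)=\mu_*\circ(\psi'_*-\psi_*)$; the pair $(\phi,\phi'')$ agrees off the middle summand and there realises the $\nu$-induction of $(\theta,\theta')$, giving $\nu_*\circ(\theta_*-\theta'_*)$. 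Summing yields $\phi_*-\phi'_*=\mu_*\circ(\psi'_*-\psi_*)+\nu_*\circ(\theta_*-\theta'_*)$, which is exactly the composition~\eqref{free-composition}. I expect the main obstacle to be the induction principle itself, namely verifying, in the elementary quasi-homomorphism language used here rather than via the Kasparov product, that tensoring $(\alpha,\beta)$ with $\crho(B)$ along $\lambda$ yields a quasi-homomorphism whose induced $K$-map is $\lambda_*$ composed with the original; this, together with the bookkeeping required to match operators under the identifications $\crho(B)\otimess\estimesb\cong\crho(B)\nu(1)\otimesh\eb$ and $\crho(B)\otimesg\egtimesbbp\cong\crho(B)\otimesh(\eb\oplus\ebp)$ in the left-module/right-operator convention, carries the real content.
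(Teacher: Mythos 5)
Your proposal is correct, and it reaches the conclusion by a genuinely different assembly than the paper, although both rest on the same two pillars (Lemmas \ref{free-theta-quasi} and \ref{free-psi-quasi}, inducted along $\mu$ and $\nu$). The paper does \emph{not} stay on $\crho(B)\otimesh(\eb\oplus\ebp)$: it first realises the composition (\ref{free-composition}) as the direct-sum quasi-homomorphism $\eta=1\otimes\psi'\oplus 1\otimes\theta$, $\eta'=1\otimes\psi^S\oplus 1\otimes\theta'$ on the \emph{enlarged} module $\fre=\bigl(\crho(B)\otimesh\eb\,\oplus\,\crho(B)\otimesh\ebp\bigr)\oplus\crho(B)\nu(1)\otimesh\eb$, and then removes the redundant summand by hand: it introduces the involution $w$ exchanging the copy $\fre_1$ of $\crho(B)\nu(1)\otimesh\eb$ sitting inside the first summand with the extra summand $\fre_2$, checks that $\eta$ commutes with $w$ modulo compacts, runs the rotation homotopy $w_t$, and observes that the conjugated pair cancels exactly on $\fre_2$, so that restricting to $\crho(B)\otimesh(\eb\oplus\ebp)$ yields precisely $(\phi,\phi')$. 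You instead stay on the original module, interpolate the homomorphism $\phi''=1\otimes\psi'$ between $\phi$ and $\phi'$, and invoke the cocycle identity $\phi_*-\phi'_*=(\phi_*-\phi''_*)+(\phi''_*-\phi'_*)$ together with the fact that a complemented summand on which a pair agrees exactly contributes nothing. Your block-by-block bookkeeping is accurate: $\phi$ and $\phi''$ do agree on $\crho(B)p_0\otimesh\eb$ and on $\crho(B)\otimesh\ebp$, and on the middle summand are $1\otimes\theta(x)$ and $1\otimes\theta'(x)$ under the identification $\crho(B)\nu(1)\otimesh\eb\cong\crho(B)\otimess\estimesb$; and your reason that induction preserves compacts is the right one --- since $\crho(B)$ is unital, the identity is a compact operator on $\crho(B)$ as a module over itself, so a rank-one operator on $\cale$ inducts to the rank-one operator with first leg $1$, whence $1\otimes\K(\cale)\subseteq\K(\crho(B)\otimes_\lambda\cale)$. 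This point, like the induction principle itself (``the composition with $\mu_*$ is given by $1\otimes\psi',1\otimes\psi^S$''), is left implicit in the paper, so your flagging it is appropriate. The one ingredient you take as a black box that the paper does not is the additivity/cocycle identity for the difference construction; be aware that its standard proof is exactly the manoeuvre the paper performs concretely with $w_t$ (pass to a $2\times 2$ amplification, rotate the second component from the pair with entries $\phi'',\phi'$ to the pair with entries $\phi',\phi''$, using that all three homomorphisms agree modulo compacts so the homotopy stays in the double algebra). So your route buys modularity --- a clean reduction to two citable general lemmas about quasi-homomorphisms --- at the cost of either citing or reproving the cocycle identity, while the paper's route is self-contained, its explicit swap of $\fre_1$ with $\fre_2$ being the needed instance of that identity.
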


In the statement of the composition (\ref{free-composition}), we are simplifying notation using the stability of $K$-theory: $K_*(\K(\egtimesbbp))$ is isomorphic to $K_*(\crho(G))$ by Morita equivalence, etc.

\begin{proof}[Proof of Lemma \ref{the composition}]
We begin by writing down the two compositions
$$K_*(\crho(B))\to K_*(\crho(G))\to K_*(\crho(B))$$
and
$$K_*(\crho(B))\to K_*(\crho(S))\to K_*(\crho(B))$$
as quasi-homomorphisms. The map $K_*(\crho(B))\to K_*(\crho(G))$ is given by the pair
$$\psi',\psi^S:\crho(B)\rightrightarrows \B(\crho(G)\otimesh(\eb\oplus\ebp))\rhd \K(\crho(G)\otimesh(\eb\oplus\ebp)).$$
Now forming $\crho(B)\otimesg(\crho(G)\otimesh(\eb\oplus\ebp))\cong \crho(B)\otimesh(\eb\oplus\ebp)$, the composition with $\mu_*$ is given by
$$1\otimes\psi',1\otimes\psi^S:\crho(B)\rightrightarrows \B(\crho(B)\otimesh(\eb\oplus\ebp))\rhd \K(\crho(B)\otimesh(\eb\oplus\ebp)).$$
Similarly for $\theta',\theta$ hence the composition (\ref{free-composition}) is given by the quasi-homomorphism
$$\eta=1\otimes\psi'\oplus 1\otimes \theta,\quad\eta'=1\otimes\psi^S\oplus 1\otimes \theta'$$
where $\eta,\eta'$ take values in the adjointable operators on the Hilbert module
$$\fre:=\Bigl(\crho(B)\dispotimesh\eb\,\oplus\,\crho(B)\dispotimesh\ebp\Bigr)\,\,\oplus\,\,\crho(B)\nu(1)\dispotimesh\eb.$$
The remainder of the proof involves simplifying this.

On $\crho(B)\otimesh \eb$, note that $\eta(x)$ is the restriction of $1\otimes \psi'(x)$ which is given by $1\otimes x$, identifying $\crho(B)\otimesg(\crho(G)\otimesh\eb)$ with $\crho(B)\otimesh\eb$. Likewise on $\crho(B)\nu(1)\otimesh\eb$, see see that $\eta'(x)$ is given by \mbox{$1\otimes \theta'(x)=1\otimes x$}, identifying the space $\crho(B)\nu(1)\otimess(\crho(S)\otimesh\eb)$ with $\crho(B)\nu(1)\otimesh\eb$.

Let $\fre_1$ denote the copy of $\crho(B)\nu(1)\otimesh\eb$ appearing as a submodule of the first summand $\crho(B)\otimesh\eb$ of $\fre$, and let $\fre_2$ denote the final summand $\crho(B)\nu(1)\otimesh\eb$. Let $w$ be the involution on $\fre$ which interchanges $\fre_1$ with $\fre_2$; this is an adjointable operator as $\fre_1,\fre_2$ are complemented in $\fre$. Let $\eta^w=w^*\eta w$. The standard homotopy $w_t=\frac 12 ((1+e^{i\pi t})+(1-e^{i\pi t})w),t\in[0,1]$ connects $w=w_1$ with the identity.

Since $\eta,\eta'$ agree modulo compact operators on $\fre_2$, while the restriction of $\eta$ to $\fre_1$ is equal to the restriction of $\eta'$ to $\fre_2$ (both are given by $x\mapsto 1\otimes x$) we see that the restrictions of $\eta$ to $\fre_1,\fre_2$ agree modulo compact operators. Thus $\eta$ commutes with $w$, and hence $w_t$, modulo compact operators. The homotopy $w_t^*\eta w_t$ is thus constant modulo compacts, so $(w_t^*\eta w_t,\eta')$ is a homotopy of maps into the double algebra. Hence the quasi-homomorphism $(\eta^w,\eta')$ induces the same map on $K$-theory as $(\eta,\eta')$.

The restrictions of $\eta^w,\eta'$ to $\fre_2$ now agree exactly. Since we have exact cancellation we can restrict $\eta^w,\eta'$ to $\crho(B)\otimesh(\eb\oplus\ebp)$, to obtain a quasi-homomorphism inducing the same map on $K$-theory. It is now a simple matter of comparing definitions to verify that these restrictions are precisely $\phi,\phi'$.
\end{proof}

To complete the proof that $K_*(\crho(B))$ is isomorphic to $K_*(\crho(G))\oplus K_*(\crho(S))$, it remains to show that the quasi-homomorphism $(\phi,\phi')$ induces the canonical isomorphism on $K$-theory. We prove this in the following Theorem.

The proof will make use of Proposition \ref{representations}, so before we embark on this we will verify that $B$ is convex for a suitable presentation of $\Gamma$ as required for this. We take the presentation of $\Gamma$ as described in Section \ref{sec:representations}. The process of reducing a word $w=\syl_1\dots\syl_n$ to a reduced word simply involves replacing pairs of letters $\syl_i\syl_{i+1}$ by a single letter $\syl$ which is their product in either $G$ or $S$. The word $w'$ so obtained differs by the single relation $\syl_i\syl_{i+1}\syl^{-1}$ in $R$. To say that a word stays in $B$ means that its track is a subset of $B$; as we are replacing pairs of letters by a single letter (representing the same element of the group) the track of $w'$ is a subset of that for $w$, and hence if $w$ stays in $B$ then so does $w'$.

Now suppose that $w=\syl_1\dots\syl_n,w'=\syl'_1\dots\syl'_n$ are two reduced words representing the same element of $B$. Then $w'$ can be obtained from $w$ by a sequence of moves of the form $\syl_i\syl_{i+1}\rightsquigarrow \syl_i''\syl_{i+1}''$ where for some $h\in H$, $\syl_i''$ is the product of $\syl_i$ with $h$, while $\syl_{i+1}''$ is the product of $h^{-1}$ with $\syl_{i+1}$. This is a two-step process: $\syl_i\syl_{i+1}\rightsquigarrow \syl_i h\syl_{i+1}''\rightsquigarrow \syl_i''\syl_{i+1}''$. The first step adds the element $\syl_1\dots\syl_i h$ to the track. This stays in $B$ as $\syl_1\dots\syl_i\in B$ and $B$ is right-$H$-invariant. The second step is another reduction, hence stays in $B$. By reducing and converting between reduce words in this way we can get between any two words representing the same element. Thus $B$ is convex.

Recall that the boundary of $B$ is defined to be the set of $b\in B$ such that $b\syl\notin B$ for some $\syl$ in the generating set $G\cup S$. Since $B$ is right-$G$-invariant we need only consider $\syl\in S$, and as $B^*=B\setminus H$ is right-$S$-invariant it follows that the boundary is a subset of $H$. As the boundary is non-empty and left-$H$-invariant it is $H$.

\begin{theorem}\label{free product theorem}
The map $\mu_*\oplus\nu_*:K_*(\crho(G))\oplus K_*(\crho(S))\to K_*(\crho(B))$ is an isomorphism, with inverse provided by $(\psi'_*-\psi_*)\oplus(\theta_*-\theta'_*)$.
\end{theorem}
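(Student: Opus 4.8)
The plan is to combine the two preceding lemmas with Lemma \ref{the composition} and then reduce everything to a single homotopy. By Lemmas \ref{free-theta-quasi} and \ref{free-psi-quasi} the four composites $(\psi'_*-\psi_*)\circ\mu_*$, $(\psi'_*-\psi_*)\circ\nu_*$, $(\theta_*-\theta'_*)\circ\mu_*$ and $(\theta_*-\theta'_*)\circ\nu_*$ equal $\mathrm{id}$, $0$, $0$ and $\mathrm{id}$ respectively, so $(\psi'_*-\psi_*)\oplus(\theta_*-\theta'_*)$ is a left inverse of $\mu_*\oplus\nu_*$. In particular $\mu_*\oplus\nu_*$ is a split injection, and $e:=(\mu_*\oplus\nu_*)\circ\big((\psi'_*-\psi_*)\oplus(\theta_*-\theta'_*)\big)$ is an idempotent on $K_*(\crho(B))$ whose image is exactly the image of $\mu_*\oplus\nu_*$. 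Hence it suffices to prove $e=\mathrm{id}$, and by Lemma \ref{the composition} this is precisely the statement that the quasi-homomorphism $(\phi,\phi')$ induces the identity on $K_*(\crho(B))$ under the Morita identification $K_*(\K(\crho(B)\otimesh(\eb\oplus\ebp)))\cong K_*(\crho(B))$.

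To evaluate $(\phi,\phi')$ I would first apply Proposition \ref{representations} to the data defining $\phi$; the hypotheses have just been checked, namely that $B$ is convex for the standard presentation of $G\ast_H S$ and has boundary equal to $H$. The proposition decomposes the module $\fre=\crho(B)\otimesh(\eb\oplus\ebp)$ as an orthogonal direct sum $\Ep\oplus\Epc$ which is invariant under $\phi$, where on $\Ep\cong\EH{}p\otimesh\eb$ the representation is the standard one $x\mapsto 1\otimes x$, while on $\Epc\cong\EH{}(1-p)\otimesh\ehgam$ it factors through the quotient $\pi\colon\crho(B)\to\crho(\Gamma)$ as $x\mapsto 1\otimes\pi(x)$. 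Since the relevant projection $p$ is the rank-one (per coset) projection built from $\ph$, the summand $\Ep$ is a copy of the standard bimodule, and $x\mapsto 1\otimes x$ on it induces exactly the canonical isomorphism (the identity) on $K_*(\crho(B))$. By construction $\phi'$ also factors through $\pi$, being $1\otimes(V_2^S)^*\pi(\,\cdot\,)V_2^S$.

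The heart of the argument is then to show that the through-$\pi$ part $\phi|_{\Epc}$ cancels against $\phi'$, so that only the contribution of $\Ep$ survives. I would exhibit a rotation homotopy of quasi-homomorphisms, keeping the first leg fixed on $\Ep$, that moves $(\phi,\phi')$ to a pair which agrees exactly on the complement of $\Ep$ and equals $(x\mapsto 1\otimes x,\;0)$ on $\Ep$. The intertwiners for this rotation are supplied by the unitaries and isometries $U_1$, $V_1$, $V_2^S$ (and $V_2^G$, connected to $V_2^S$ through the involution $1\otimes v$ as in Lemma \ref{free-psi-quasi}) coming from Remark \ref{right-equivariance}: these identify the various copies of the regular-type representation of $\crho(\Gamma)$, graded by $H\backslash\Gamma$, that occur inside $\eb$ and $\ebp$. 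Once the complementary pieces are matched the pair is degenerate there and contributes nothing, leaving the class of $x\mapsto 1\otimes x$ on $\Ep$, that is, the identity.

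I expect the main obstacle to be the global control of this homotopy. One must verify that each interpolating map is a genuine $*$-representation of $\crho(B)$---this is exactly where Proposition \ref{representations} is needed, turning an assignment on generators into a representation---and that along the whole rotation the difference of the two legs remains in $\K(\crho(B)\otimesh(\eb\oplus\ebp))$, so that the family lands in the double algebra at every parameter. The bookkeeping that makes the cancellation exact is the matching of the $H\backslash\Gamma$-gradings of $\eb$ and $\ebp$ under $U_1,V_1,V_2^S,V_2^G$, which is governed by the combinatorics of reduced words in $G\ast_H S$ underlying the convexity of $B$; ensuring that this matching is compatible with the $\crho(H)$-module structure is the delicate point.
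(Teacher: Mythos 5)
Your proposal is correct and follows essentially the paper's own proof: the same reduction through Lemma \ref{the composition}, then a rotation homotopy built from the involution $v$ (conjugating the $G$-generators by $1\otimes v_\tau$ while fixing the $S$-generators), with Proposition \ref{representations} --- via the convexity of $B$ and the identification of its boundary with $H$ --- validating each interpolant as a representation, and compact perturbation of the identity keeping the pair in the double algebra throughout. The only repackaging is at the endpoint: the paper phrases it as $\phi$ being homotopic to $\phi''=\phi'+{}$the rank-one family $\xi\mapsto\ip{\xi,1\otimes\sigma_e}(x\otimes\sigma_e)$ (and it applies Proposition \ref{representations} not on $\crho(B)\otimesh(\eb\oplus\ebp)$ itself but on the $\crho(H)$-module $\eb\otimesh(\eb\oplus\ebp)$, descending by faithfulness of the representation of $\crho(B)$ on $\eb$), which is the precise form of your statement that only the $\Ep$-contribution survives --- note that your pair $(x\mapsto 1\otimes x,\,0)$ on $\Ep$ only lands in the compacts after this descent identifies it with right multiplication on the corner $\crho(B)\otimes\sigma_e\cong\crho(B)$.
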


\begin{proof}
We will prove that $(\phi,\phi')$ induces the canonical isomorphism on $K$-theory by a homotopy argument. To avoid ambiguity of notation, we write $\sigma_b$, $b\in B$ for elements of $\eb$, and write $\sigma'_b$, $b\in B'$ for elements of $\ebp$, in particular $\sigma_e$ denotes an element of $\eb$ not $\ebp$. We begin by observing that for $x\in\crho(B)$ the operator $\phi'(x)$ leaves the subspace $\crho(B)\otimesh (\ebs\oplus\ebp)$ invariant, and vanishes on the complementary subspace $\crho(B)\otimesh(\crho(H)\oplus 0)\cong \crho(B)$. We will show that $\phi(x)$ is homotopic to the sum of $\phi'(x)$ and the rank-one operator $\xi\mapsto \ip{\xi,1\otimes \sigma_e}(x\otimes \sigma_e)$. We denote this sum by $\phi''(x)$.

The homotopy $\Phi_\tim$ that we construct will be such that $(\Phi_\tim,\phi')$ is a quasi-homomorphism for all $\tim$. We will then be able to conclude that the quasi-homomorphism $(\phi,\phi')$ and $(\phi'',\phi')$ induce the same map on $K$-theory. The latter induces the canonical isomorphism as required.

We begin by comparing $\phi$ with $\phi''$. In Lemma \ref{free-theta-quasi} we showed that if $x=\nu(z)$, $z\in\crho(S)$ then $\theta(x)$ is the sum of $\theta'(x)=1\otimes x$ and the rank-one operator $\xi\mapsto \ip{\xi,1\otimes \sigma_e}(z\otimes \sigma_e)$. The restriction of $\phi(x)$ to $\crho(B)\otimesh\eb$ is $p\otimes x\oplus 1\otimes\theta(x)$, hence we see that this is $1\otimes x$ plus the rank-one operator $\xi\mapsto \ip{\xi,1\otimes \sigma_e}(\nu(z)\otimes \sigma_e)$. On the other hand the restriction of $\phi'(x)$ is simply $1\otimes x$ (cf.\ proof of Lemma \ref{free-psi-quasi}). Thus the restriction of $\phi''(x)$ agrees with $\phi(x)$, for $x=\nu(z)$.

Now we consider the restrictions of $\phi(x),\phi'(x),\phi''(x)$ to $\crho(B)\otimesh\ebp$, for $x=\nu(z)$, $z\in\crho(S)$. In Lemma \ref{free-psi-quasi} we showed that $\psi^S(x)$ agrees with $\psi'(x)$ on this subspace, hence tensoring with $1$, the restrictions of $\phi(x)$ and $\phi'(x)$ agree. On this subspace $\phi''(x)=\phi'(x)$ hence we deduce that $\phi''(x)=\phi(x)$ for $x=\nu(z)$.

Now consider $x=\mu(y)$, for $y\in\crho(G)$. In Lemma \ref{free-theta-quasi} we showed that for such $x$, $\theta(x)=\theta'(x)=1\otimes x$. Hence we see that the restriction of $\phi(x)$ to $\crho(B)\otimesh\eb$ is given by $1\otimes x$. This agrees with the restriction of $1\otimes \psi^G(x)$, which is obtained from $\phi'(x)$ by conjugating by $1\otimes v$. (Recall that $v$ is the unitary involution on $\eb\oplus\ebp$ which exchanges $\crho(H)\oplus0$ with $0\oplus\crho(H)$ while leaving $\ebs\oplus\ebc$ fixed.)

On the other hand, again by Lemma \ref{free-theta-quasi}, the restrictions of $\psi'(x)$ and $\psi^G(x)$ to $\crho(B)\otimesh\ebp$ differ by the rank-one operator $\xi\mapsto \ip{\xi,1\otimes \sigma'_e}(y\otimes \sigma'_e)$. Tensoring with $1$, we see that the restriction of $\phi(x)$ is the restriction of $1\otimes \psi^G(x)$ plus the rank-one operator $\xi\mapsto \ip{\xi,1\otimes \sigma'_e}(\mu(y)\otimes \sigma'_e)$. Hence we see that $\phi''(x)$ is not equal to $\phi(x)$ but rather to the conjugate of this by $1\otimes v$.

Motivated by this we take $\Phi_\tim$ to be constant on $T^B_s$ for $s\in S\setminus H$, setting this equal to $\phi(T^B_s)$, while for $g\in G$ we conjugate $\phi(T^{B}_g)$ by the element $1_B\otimes v_\tim$ where $v_\tim$ is the standard homotopy $\frac 12 ((1+e^{i\pi \tim})+(1-e^{i\pi \tim})v)$. Since $\Phi_0,\Phi_1$ agree with respectively $\phi,\phi''$ on the generators of $\crho(B)$, to construct the homotopy it remains to show that $\Phi_\tim$ extends to a representation of $\crho(B)$ for $t\in (0,1)$. As $\phi(T^B_h)=1\otimes (T^B_h\oplus T^{B'}_h)$ for $h\in H$ we observe that $v$ and hence $v_\tim$ commute with $\phi(T^{B}_h)$, so we have $\Phi_\tim(T^B_{h})=\phi(T^{B}_h)$.

We now work over the Hilbert module
$$\fre=\eb\dispotimesb \bigl(\crho(B)\dispotimesh(\eb\oplus\ebp)\bigr)\cong \eb\dispotimesh(\eb\oplus\ebp).$$
We will show that for each $\tim$, $1\otimes\Phi_\tim$ extends to a representation of $\crho(B)$ on $\fre$, using Proposition \ref{representations}. Following the notation of this proposition, we will write $\alpha(\syl)$ for $1\otimes\Phi_\tim(T^B_\syl)$, and we extend this to words in the usual way.

Let $\gamma\in \Gamma$, and define $\EH\gamma$ to be the closed submodule of $\fre$ generated by $\sigma_b\otimes \sigma_w$ where $b,w\in B$ with $bw\in H\gamma$, and by $\sigma_b\otimes \sigma'_w$ where $b\in B$, $w\in B'$ with $bw\in H\gamma$. We remark that each elementary tensor of this form lies in precisely one such submodule; it follows that the spaces $\EH\gamma$ are mutually orthogonal and $\fre=\bigoplus_{H\gamma\in H\backslash\Gamma}\EH\gamma$. We observe that $1\otimes\phi(T^B_g)$ takes $\EH\gamma$ to $\EH{\gamma g}$ for $g\in G$, and $1\otimes\phi(T^{B}_s)$ takes $\EH\gamma$ to $\EH{\gamma s}$ for $s\in S\setminus H$. Since $1\otimes v$ preserves each $\EH\gamma$, we deduce that $\alpha(\syl)$ takes $\EH\gamma$ to $\EH{\gamma\syl}$ for $\syl\in G\cup S$.

If $w$ is a relation which stays in $B$ then $w=gg^{-1}$ with $g\in G$ or $g_1g_2g_3$ where $g_1,g_2,g_3\in G$ and $g_1g_2g_3=e$ in $G$. Then as $\phi$ is a representation of $\crho(B)$ we have that $\alpha(gg^{-1})$ and $\alpha(g_1g_2g_3)$ are given by conjugating $1\otimes \phi(T^B_gT^B_{g^{-1}})$ and $1\otimes \phi(T^B_{g_1}T^B_{g_2}T^B_{g_3})$ respectively by $1\otimes v_\tim$. Since $T^B_gT^B_{g^{-1}}=1$ and $T^B_{g_1}T^B_{g_2}T^B_{g_3}=1$ we have that $\alpha(gg^{-1})$ and $\alpha(g_1g_2g_3)$ are both $1$.

Let $p$ denote the rank one projection of $\fre$ onto the submodule generated by $\sigma_e\otimes \sigma_e$. Let $w$ be a relation which does not stay in $B$. This means that $w$ is $ss^{-1}$ with $s\in S\setminus H$ or $w=s_1s_2s_3$ with $s_1,s_2,s_3\in S$, not all in $H$, and $s_1s_2s_3=e$ in $S$. We have $\alpha(ss^{-1})=1\otimes \phi(T^B_sT^B_{s^{-1}})$, while $\alpha(s_1s_2s_3)=1\otimes \phi(T^B_{s_1}T^B_{s_2}T^B_{s_3})$. The operators $T^B_sT^B_{s^{-1}}$, $T^B_{s_1}T^B_{s_2}T^B_{s_3}$ are both equal to $\nu(1)$ which is the projection of $\eb$ onto $\ebs$. Thus we have $\alpha(ss^{-1})=\alpha(s_1s_2s_3)=1-p$.

The restriction of $1\otimes\phi(T^B_h)$ to $\EH{}$ provides a representation of $\crho(H)$ which we denote $\hat\rho$, and as observed above this agrees with $\alpha(h)=1\otimes \Phi_\tim(T^B_h)$. The fact that the representation $\hat\rho$ commutes with $p$, follows from the fact that the representation of $\crho(H)$ on $\eb$ commutes with the projection onto the submodule $\crho(H)$, applied on both factors of $\eb$. We have already seen that $B$ is convex and has boundary $H$, thus by Proposition \ref{representations}, $1\otimes \Phi_\tim$ extends to a representation of $\crho(B)$ on $\fre$.

Note that as the representation of $\crho(B)$ on $\eb$ is faithful, the representation of $\B(\crho(B)\otimesh(\eb\oplus\ebp))$ on $\fre$ defined by $x\mapsto 1\otimes x$ is also faithful. Thus the algebra of adjointable operators on $\crho(B)\otimesh(\eb\oplus\ebp)$ is identified with a subalgebra of $\B(\fre)$. The image of $\crho(B)$ under the extension of $1\otimes \Phi_\tim$ lies in this subalgebra. In other words $\Phi_\tim$ also extends to give a representation of $\crho(B)$ on $\crho(B)\otimesh(\eb\oplus\ebp)$.

It remains to check that $(\Phi_\tim,\phi')$ defines a quasi-homomorphism for all $\tim$, and hence that the quasi-homomorphisms $(\phi'',\phi')$ and $(\phi,\phi')$ agree on $K$-theory. It is sufficient to check that $\Phi_\tim(x)-\phi(x)$ is a compact operator for the generators $x=T^B_g$ and $x=T^{B}_s$: given this, $\Phi_\tim(x)-\phi'(x)=(\Phi_\tim(x)-\phi(x))+(\phi(x)-\phi'(x))$ is compact as required. Since $v$ is a compact perturbation of the identity, the same holds for $v_\tim$ for all $\tim$, so $\Phi_\tim(T^B_g)$ agrees with $\phi(T^B_g)$ modulo compact operators, while $\Phi_\tim(T^{B}_s)=\phi(T^{B}_s)$ by definition. This completes the proof.
\end{proof}

\section{$HNN$-extensions} \label{HNN}

In this section we consider $K$-theory exact sequences for $HNN$-extensions. Let $G$ be a discrete group, $H$ a subgroup of $G$ and $\theta$ a monomorphism from $H$ to $G$. Let $K$ denote the image of $H$ in $G$. The $HNN$-extension $\Gamma=G\mathop{*}_H$, is the group generated by $G$ along with one extra generator $t$, and subject to the additional relations $tht^{-1}=\theta(h)$ for $h\in H$. In this context there is again an $H$-almost invariant subspace $B$ of the group $\Gamma$, and hence a corresponding exact sequence
$$0\to \crho(H)\otimes\K \to \crho(B)\to \crho(\Gamma)\to 0.$$
Specifically, we take the $H$-infinite, $H$-almost invariant subset $B=B_{H,G}$ of $\Gamma$,  as in Section \ref{sub:Bass-Serre}. We will prove that $K_*(\crho(B))\cong K_*(\crho(G))$, hence obtaining a 6-term exact sequence to compute the $K$-theory of $\crho(\Gamma)$ in terms of the $K$-theory groups for $\crho(H)$ and $\crho(G)$. The set $B$ will be right $G$-invariant, giving us a unital representation $\mu:\crho(G)\to \crho(B)$, and as in Section \ref{free-products} we will construct a quasi-homomorphism to invert this at the level of $K$-theory.

\medskip
As noted in Section \ref{sub:Bass-Serre}, the set $B$ can be described as those group elements which cannot be written as a reduced word beginning with $t^{-1}$. By construction the left-stabiliser of $B$ is $H$, while the left-stabiliser of $tB$ is $H^t=K$. The set $tB$ can be described as elements of $\Gamma$ which can be written as a reduced word starting with $t$.

We will build a quasihomomorphism from $\crho(B)$ to $\crho(G)$ using the bijection
\begin{equation}\label{HNN bijection}
(G\times_H\Gamma)\,\sqcup\, G \,\sqcup\, (G\times_K tB)  \leftrightarrow  (G\times_H B) \,\sqcup\, \Gamma
\end{equation}
To construct the bijections we decompose $\Gamma$ as follows. Take $\tree$ and remove all edges emanating from $eG$. This decomposes the tree into three subsets $\tree_L,\tree_R$ and the singleton $eG$, where $\tree_L$ is the union of the components joined to $eG$ by an edge labelled $t$, and $\tree_R$ is the union of the components connected to $eG$ by an edge labelled $t^{-1}$. The preimages of $\tree_L$ and $\tree_R$ under the map from $\Gamma$ to $\tree$ are denoted $\Gamma_L$ and $\Gamma_R$, while the preimage of the vertex $eG$ is simply $G$. We note that this gives us a decomposition of $\Gamma$ as the disjoint union of $\Gamma_L,\Gamma_R$ and $G$. All three subsets are both left- and right-$G$-invariant: right-invariance is automatic as they are preimages of subsets of $\tree$ while left-invariance follows from left-invariance of the subsets $\tree_L,\tree_R,G$ of the tree.

We observe that $\tree_L$ can alternatively be described as the product $G\calb^c$: recall that $\calb$ and $\calb^c$ are obtained by cutting the single edge $\epsilon$ from $t^{-1}G$ to $eG$, while $\tree_L$ is obtained by cutting all $G$-translates of this edge. Moreover the stabiliser of $\epsilon$ is $H$, hence we obtain a bijection $\tree_L\leftrightarrow G\times_H\calb^c$. This induces a corresponding bijection in the group $\Gamma_L\leftrightarrow G\times_H B^c$. Similarly $\tree_R\leftrightarrow G\times_Kt\calb$, and hence $\Gamma_R\leftrightarrow G\times_K tB$.

Using the two decompositions of $\Gamma$ as $\Gamma_L\sqcup G\sqcup \Gamma_R$ and $B\sqcup B^c$, the required bijections are now given by:
\begin{equation}\label{more HNN bijections}
\begin{matrix}
\bigl(\,G\times_H B &\sqcup&G\times_H B^c\,\bigr)&\sqcup&G&\sqcup&G\times_K tB\\
\updownarrow&&\updownarrow&&\updownarrow&&\updownarrow\\
G\times_H B &\sqcup& \bigl(\,\,\gammal\,\,\,&\sqcup &G&\sqcup &\quad\gammar\,\,\bigr)
\end{matrix}
\end{equation}

\medskip

As in Section \ref{free-products} the quasi-homomorphisms will be given by representations of $\crho(B)$ on Hilbert modules. We follow the same convention (Definition \ref{notation for Hilbert modules}) regarding notation for Hilbert modules.

The above HNN bijection (\ref{HNN bijection}) induces an isomorphism of $\crho(G)$-Hilbert modules
$$(\crho(G)\dispotimesh \ehgam)\oplus \crho(G)\oplus (\crho(G)\dispotimesk\etb)\xrightarrow[\cong]{U}(\crho(G)\dispotimesh\eb)\oplus\eggam.$$

We are now in a position to write down the quasi-homomorphism.  Given $x\in \crho(B)$ we form the operators $1\otimes x$ on $\crho(G)\otimesh\eb$ and $\pi(x)$ in $\eggam$. Now define
$$\psi(x)=(1\otimes x)\oplus \pi(x).$$

The automorphism of $\Gamma$ given by $\gamma\mapsto t\gamma t^{-1}$ induces an isomorphism from $\crho(H)$ to $\crho(H^t)=\crho(K)$. There is a corresponding map $V:\eb\to \etb$ given by $\sigma_b\mapsto \sigma_{tb}$, and this induces an isomorphism of pairs $(\crho(H),\eb)\cong(\crho(K),\etb)$. Conjugation by $V$ induces an isomorphism $\B(\eb)\cong \B(\etb)$; for $x\in \B(\eb)$ we denote the corresponding elements of $\B(\etb)$ by $\hat{x}$. We remark that for a generators $T^B_w$ of $\crho(B)$, the operator $\hat{T^B_w}$ is simply $T^{tB}_w$.

Now given $x\in \crho(B)$ we can form the operator $(1\otimes \pi(x))\oplus 0\oplus (1\otimes \hat{x})$ on
$$(\crho(G)\dispotimesh \ehgam)\oplus (\crho(G)\dispotimesk\etb)\oplus \crho(G).$$
Conjugating by $U$ we define
$$\psi'(x)=U^*\bigl((1\otimes \pi(x))\oplus 0\oplus(1\otimes \hat{x})\bigr)U.$$

As $\ehnn$ is a $\crho(G)$-module, the algebra of compact operators on this has the same $K$-theory as $\crho(G)$ by Morita equivalence. As usual, it is the $K$-theory of the compact operators on the Hilbert module, which is the target of the map on $K$-theory.

\begin{lemma}
\label{hnn-quasi}
The pair $\psi,\psi'$ define a quasi-homomorphism
$$\crho(B)\rightrightarrows \B(\dispehnn)\rhd \K(\dispehnn).$$
Moreover, the composition on $K$-theory
$$K_*(\crho(G))\xrightarrow{\mu_*}K_*(\crho(B))\xrightarrow{\psi_*-\psi'_*}K_*(\K(\dispehnn))
$$
is the canonical isomorphism.
\end{lemma}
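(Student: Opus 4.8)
The plan is to follow the template established in Lemmas \ref{free-theta-quasi} and \ref{free-psi-quasi}. Both $\psi$ and $\psi'$ are manifestly $*$-homomorphisms, being assembled from the representation $x\mapsto 1\otimes x$, the quotient $\pi$, the conjugation $x\mapsto\hat x$, and conjugation by the unitary $U$; so the substantive point is that $\psi(x)-\psi'(x)$ lands in $\K(\dispehnn)$. Since $\crho(B)$ is generated by the image of $\mu$ together with $T^B_t$, it suffices to check this on the generators $T^B_g$ for $g\in G$ and on $T^B_t$. Throughout I would work with the decompositions $\eggam={}_G\cale_{\gammal}\oplus{}_G\cale_G\oplus{}_G\cale_{\gammar}$ coming from $\Gamma=\gammal\sqcup G\sqcup\gammar$, and $\ehgam=\eb\oplus\ebc$, together with the matching of summands dictated by the bijection (\ref{more HNN bijections}): $U$ carries $\crho(G)\otimesh\eb$ to the first summand of $\dispehnn$, $\crho(G)\otimesh\ebc$ to ${}_G\cale_{\gammal}$, the middle copy of $\crho(G)$ to ${}_G\cale_G$, and $\crho(G)\otimesk\etb$ to ${}_G\cale_{\gammar}$.

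For a generator $T^B_g$ with $g\in G$ the operator $\pi(T^B_g)=\rho(g)$ is right multiplication by $g$, which preserves each of $B$, $B^c$, $\gammal$, $G$ and $\gammar$ since all of these are right-$G$-invariant. Applying Remark \ref{right-equivariance} to the bijections $G\times_H B^c\leftrightarrow\gammal$ and $G\times_K tB\leftrightarrow\gammar$, and using that the restriction of right multiplication by $\rho(g)$ to $\eb$ is exactly $T^B_g$, I would show that $\psi(T^B_g)$ and $\psi'(T^B_g)$ agree on the first summand, on ${}_G\cale_{\gammal}$, and on ${}_G\cale_{\gammar}$, with no off-diagonal terms. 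The only discrepancy occurs on ${}_G\cale_G\cong\crho(G)$, where $\psi(T^B_g)$ acts by right multiplication by $\rho(g)$ whereas $\psi'(T^B_g)$ vanishes (the middle block of the operator defining $\psi'$ is $0$). Right multiplication by $\rho(g)$ on the free rank-one module $\crho(G)$ is the rank-one operator $\xi\mapsto\ip{\xi,\sigma_e}(\sigma_e\rho(g))$, hence compact. This simultaneously establishes compactness of the difference on the image of $\mu$ and identifies $y\mapsto\psi(\mu(y))-\psi'(\mu(y))$ with the $*$-homomorphism $D\colon\crho(G)\to\K({}_G\cale_G)$ given by right multiplication on ${}_G\cale_G$.

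The genuinely delicate generator is $T^B_t$, and I expect this to be the main obstacle. Here $\pi(T^B_t)=\rho(t)$ is right multiplication by $t$, which does \emph{not} respect the above decompositions: on $\ehgam$ it maps $\eb$ isometrically into $\ebs\subseteq\eb$ (one checks $Bt=B\setminus H$, so that $1-T^B_t(T^B_t)^*=\ph$), the missing coset $H$ being fed from $\ebc$ through the coset $Ht^{-1}$; and on $\eggam$ it moves the vertex coset $G$ into $Gt\subseteq\gammar$. The strategy is to match $\psi(T^B_t)$ and $\psi'(T^B_t)$ block by block: the bulk of the $t$-translation cancels because the bijection (\ref{HNN bijection}) is compatible with right multiplication by $t$ away from the boundary edge $eH$ and the vertex $eG$, the role of the conjugated operator $\widehat{T^B_t}=T^{tB}_t$ on $\crho(G)\otimesk\etb$ being precisely to reproduce right translation by $t$ on ${}_G\cale_{\gammar}$. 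What survives this cancellation is supported on finitely many cosets adjacent to the boundary (such as $Gt^{-1}$, $G$ and $Gt$), and is therefore a finite sum of rank-one operators over $\crho(H)$ and $\crho(G)$, hence compact. The technical heart of the argument is this bookkeeping, verifying that the two representations route the boundary cosets into ${}_G\cale_G$, the first summand, and ${}_G\cale_{\gammar}$ in ways that differ only by finite-rank pieces, and that no non-compact off-diagonal block persists.

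For the statement about $K$-theory I would combine the two computations. The difference on the image of $\mu$ was identified above with the homomorphism $D$, and since $\psi\circ\mu=\psi'\circ\mu\oplus D$ as a block decomposition in which $\psi'\circ\mu$ vanishes identically on ${}_G\cale_G$ while $D$ is supported there, the quasi-homomorphism $(\psi\circ\mu,\psi'\circ\mu)$ induces the same map on $K$-theory as $D$. As $D$ is the representation of $\crho(G)$ on the free rank-one module ${}_G\cale_G\cong\crho(G)$ by right multiplication, $D_*$ is the canonical Morita isomorphism $K_*(\crho(G))\to K_*(\K({}_G\cale_G))$; composing with the $K$-theory isomorphism induced by the corner inclusion $\K({}_G\cale_G)\hookrightarrow\K(\dispehnn)$ then yields the canonical isomorphism $K_*(\crho(G))\to K_*(\K(\dispehnn))$, as required.
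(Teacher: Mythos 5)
Your proposal is correct and follows essentially the same route as the paper: compactness is verified on the generators $\mu(y)$ and $T^B_t$, the cancellation comes from the right-$G$-equivariance of the bijections via Remark \ref{right-equivariance}, and the difference $\psi\circ\mu-\psi'\circ\mu$ is identified with right multiplication on the summand ${}_G\cale_G\cong\crho(G)$, realised as rank-one operators, which gives the canonical Morita isomorphism on $K$-theory. The one step you defer as ``bookkeeping'' --- the block-by-block analysis of $\psi(T^B_t)-\psi'(T^B_t)$ --- is precisely what the paper's proof carries out explicitly, computing the actions on $\rho(g)\otimes\sigma_\gamma$ and on $\sigma_\gamma\in\eggam$ and finding agreement except on the $\crho(G)$-submodule generated by $\sigma_e$ and $\sigma_{t^{-1}}$, which confirms your predicted finite-rank remainder supported on the boundary cosets.
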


\begin{proof}
The unitary $U$ is built from the bijections appearing in (\ref{more HNN bijections}), and we observe that each bijection is right-$G$ equivariant, and hence $U$ intertwines the right $\crho(G)$ actions on the Hilbert modules, cf.\ Remark \ref{right-equivariance}. It follows that the compositions $\psi\circ\mu,\psi'\circ\mu$ agree except on the $\crho(G)$-submodule of $\eggam$ generated by $\sigma_e$. This submodule is isomorphic to $\crho(G)$, and the restriction of $\psi\circ\mu$ to this submodule is simply the right-regular representation of $\crho(G)$, while $\psi'\circ\mu$ vanishes on this submodule. Hence the difference $\psi\circ\mu-\psi'\circ\mu$ takes $y\in\crho(G)$ to the rank-one operator $\xi\mapsto \ip{\xi,0\oplus\sigma_e}(0\oplus y\sigma_e)$.

We now consider $\psi(T^B_t)-\psi'(T^B_t)$. We have $\psi(T^B_t)=1\otimes T^B_t\oplus \rho(t)$, while $\psi'(T^B_t)=U^*((1\otimes \rho(t))\oplus 0\oplus (1\otimes T^{tB}_t))U$. First we compare the action of these on $\rho(g)\otimes \sigma_\gamma$ in $\crho(G)\otimesh\eb$. The operator $U^*$ includes this into $\crho(G)\otimesh\ehgam$, and thus we have
$$(\rho(g)\otimes \sigma_\gamma)\psi'(T^B_t)=(\rho(g)\otimes \sigma_\gamma)(1\otimes \rho(t))U=(\rho(g)\otimes \sigma_{\gamma t})U=\rho(g)\otimes \sigma_{\gamma t}$$
since $\gamma t\in B$ for $\gamma\in B$. This agrees with the action of $\psi(T^B_t)$. Now consider the action on $\sigma_\gamma\in \eggam$. The operator $U^*$ takes this to $\rho(\gamma)$ if $\gamma\in G$, and otherwise to $\rho(g)\otimes \sigma_{g^{-1}\gamma}$ where the factorisation of $\gamma$ is such that $g\in G$ and $g^{-1}\gamma\in B^c$ if $\gamma\in \Gamma_L$ and in $tB$ if $\gamma\in \Gamma_R$. In the case that $\gamma\in G$ we see that $\sigma_\gamma\psi'(T^B_t)=0$, and if $\gamma\in \Gamma_R$ then we get
$$\sigma_\gamma\psi'(T^B_t)=(\rho(g)\otimes \sigma_{g^{-1}\gamma})(1\otimes T^{tB}_t)U=(\rho(g)\otimes \sigma_{g^{-1}\gamma t})U=\sigma_{\gamma t}.$$
If $\gamma\in \Gamma_L$ then
$$\sigma_\gamma\psi'(T^B_t)=(\rho(g)\otimes \sigma_{g^{-1}\gamma})(1\otimes \rho(t))U=(\rho(g)\otimes \sigma_{g^{-1}\gamma t})U$$
which is $\sigma_{\gamma t}$ if $\gamma t\in \Gamma_L$, i.e.\ $\gamma t \notin G$, and is $\rho(\gamma t)\otimes \sigma_e\in \crho(B)\otimes \eb$ if $\gamma t \in G$.

On the other hand the action of $\psi(T^B_t)$ on $\sigma_\gamma$ is given by $\rho(t)$ so we simply have $\sigma_{\gamma t}$ for all $\gamma$. Thus we see that $\psi(T^B_t),\psi'(T^B_t)$ agree except on the submodule generated by $\sigma_e,\sigma_{t^{-1}}$. In particular the difference is compact.

Since $\crho(B)$ is generated by the image of $\mu$ along with $T^B_t$ we deduce that the pair $\psi,\psi'$ define a quasi-homomorphism, and the computation of $\psi\circ\mu-\psi'\circ\mu$ gives the required composition on $K$-theory.
\end{proof}

As in Section \ref{free-products} we will complete our computation by a homotopy argument, using Proposition \ref{representations}. To use this we must fix a presentation for $\Gamma$. We take the generating set $\Sigma$ to be $G\cup\{t,t^{-1}\}$ and we take the set of relations to be words of length at most $3$ in $G$ representing the identity along with words of the form $tht^{-1}k$ where $k=\theta(h)$, and their cyclic permutations.

As with the free product case, passing to a reduced word involves replacing a word with a single letter (e.g.\ replacing $tht^{-1}$ with $k$) and hence does not increase tracks. Thus any word that stays in $B$ can be simplified to a reduced word while staying in $B$. Two reduced words represent the same element if it is possible to get from one to the other by a sequence of moves which change a subword $g_itg_{i+1}$ to $g_i'tg_{i+1}'$ where $g_i=g_i'k$ and $g_{i+1}'=hg_{i+1}$ with $k=\theta(h)$, or similarly for $g_it^{-1}g_{i+1}$. These moves are three steps: $g_itg_{i+1}\rightsquigarrow g_i'ktg_{i+1}\rightsquigarrow g_i' t hg_{i+1}\rightsquigarrow g_i'tg_{i+1}'$. For a word staying in $B$, the first step stays in $B$ by right-$K$-invariance of $B$, the second by right-$H$-invariance of $B$, and the third is a reduction.

As $B$ is right-$G$-invariant and is translated into itself by $t$, the boundary of $B$ is those $b\in B$ such that $bt^{-1}\notin B$. Since $b\in B$ precisely when it cannot be written as a reduced word starting with $t^{-1}$, we have $bt^{-1}\notin B$ if and only if $b\in H$, so $bt^{-1}=t^{-1}k$ for $k=\theta(h)$. Hence the boundary of $B$ is the left-stabiliser $H$.

\begin{theorem}
The map $\mu_*:K_*(\crho(G))\to K_*(\crho(B))$ is an isomorphism, with inverse provided by $(\psi_*-\psi'_*)$.
\end{theorem}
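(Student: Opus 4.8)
The plan is to imitate the proof of Theorem \ref{free product theorem}. Lemma \ref{hnn-quasi} already establishes that $(\psi_*-\psi'_*)\circ\mu_*$ is the canonical isomorphism, so $\mu_*$ is a split injection with left inverse $\psi_*-\psi'_*$. It therefore remains only to prove that $\mu_*$ is surjective, equivalently that the composition $\mu_*\circ(\psi_*-\psi'_*)$ equals the identity on $K_*(\crho(B))$. I would realise this composition as a single quasi-homomorphism and then deform it, through a homotopy supplied by Proposition \ref{representations}, onto a quasi-homomorphism that manifestly induces the identity.

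First I would assemble the composition. Tensoring $\psi,\psi'$ with the identity over $\crho(G)$, where $\crho(G)$ acts on $\crho(B)$ through $\mu$, produces a quasi-homomorphism $(\phi,\phi')=(1\otimes\psi,1\otimes\psi')$ on the adjointable operators of
$$\crho(B)\otimesg\bigl((\crho(G)\otimesh\eb)\oplus\eggam\bigr)\cong(\crho(B)\otimesh\eb)\oplus(\crho(B)\otimesg\eggam),$$
which by the usual stability and Morita identifications (exactly as in Lemma \ref{the composition}) induces $\mu_*\circ(\psi_*-\psi'_*)$ on $K$-theory. Rewriting the second summand by means of $U$ and the bijections of (\ref{more HNN bijections}), and then cancelling the resulting redundant copies of $\crho(B)\otimesh\eb$ by a $w$-conjugation homotopy of the kind used in the last paragraph of the proof of Lemma \ref{the composition}, I would arrive at a quasi-homomorphism, still denoted $(\phi,\phi')$, whose induced map on $K$-theory is unchanged.

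Next comes the homotopy. I would set $\phi''(x)=\phi'(x)$ plus the canonical rank-one operator $\xi\mapsto\ip{\xi,0\oplus\sigma_e}(0\oplus x\sigma_e)$ arising in Lemma \ref{hnn-quasi}, so that $(\phi'',\phi')$ induces the identity. Comparing $\phi$ with $\phi''$ on generators, I expect them to agree on the image of $\mu$ (this is precisely the content of the rank-one computation in Lemma \ref{hnn-quasi}) and to differ on $T^B_t$ only by conjugation by an involution $1\otimes v$, the analogue of the involution of Theorem \ref{free product theorem}, here exchanging the two rank-one copies of $\crho(H)$ singled out in the proof of Lemma \ref{hnn-quasi} (those generated by $\sigma_e$ and $\sigma_{t^{-1}}$). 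Accordingly I would define $\Phi_\tim$ to be constant and equal to $\phi$ on $\mu(\crho(G))$, and to conjugate $\phi(T^B_t)$ by the standard homotopy $v_\tim=\tfrac12((1+e^{i\pi\tim})+(1-e^{i\pi\tim})v)$ running from $v$ to the identity. Since $v_\tim$ is a compact perturbation of the identity, once $\Phi_\tim$ is known to be a genuine representation of $\crho(B)$ the pair $(\Phi_\tim,\phi')$ is a quasi-homomorphism for every $\tim$, joining $(\phi,\phi')$ to $(\phi'',\phi')$ and hence finishing the proof.

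The crux, and the step I expect to be the main obstacle, is showing that $\Phi_\tim$ extends to a representation of $\crho(B)$ for each intermediate $\tim$. I would invoke Proposition \ref{representations} with the presentation of $\Gamma$ fixed just above the theorem, using that $B$ is convex with boundary $H$ as verified there. The grading by $H\backslash\Gamma$ and the facts that relations staying in $B$ map to $1$ and that the $\crho(H)$-action commutes with the rank-one projection $p$ go through as in the free-product case. The delicate point is the defining $HNN$ relations: the word $tht^{-1}\theta(h)^{-1}$ stays in $B$ and so must map to $1$, whereas $t^{-1}\theta(h)t\,h^{-1}$ crosses out of $B$ (its first prefix $t^{-1}$ already leaves $B$) and so must map to $1-p$. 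Verifying that the conjugated operator $\Phi_\tim(T^B_t)$ still satisfies the first of these requires that $1\otimes v_\tim$ commute with $\phi(T^B_h)$ for $h\in H$ and with $\phi(T^B_k)$ for $k\in\theta(H)$, so that the interspersed copies of $v_\tim$ telescope; confirming this commutation, together with the $t$-$G$ bookkeeping that is absent in Theorem \ref{free product theorem}, is where the real work lies.
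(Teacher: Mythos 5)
Your overall architecture coincides with the paper's: Lemma \ref{hnn-quasi} supplies the left inverse; the composition $\mu_*\circ(\psi_*-\psi'_*)$ is realised as the quasi-homomorphism $(\phi,\phi')=(1\otimes\psi,1\otimes\psi')$ acting on $(\crho(B)\otimesh\eb)\oplus(\crho(B)\otimesg\eggam)$; one sets $\phi''(x)=\phi'(x)$ plus the rank-one operator $\xi\mapsto\ip{\xi,T^B_e\otimes\siggam_e}(x\otimes\siggam_e)$; and one joins $(\phi,\phi')$ to $(\phi'',\phi')$ by a family $(\Phi_\tim,\phi')$ of quasi-homomorphisms whose multiplicativity is verified through Proposition \ref{representations}, with the convexity of $B$ and the identification of its boundary with $H$ exactly as you say. (One small point: your preliminary ``cancellation of redundant copies'' step modelled on Lemma \ref{the composition} is superfluous here, since the unitary $U$ is already built into the definition of $\psi'$, so $\phi$ and $\phi'$ act on the same module from the outset.)

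However, the homotopy itself --- the step you rightly identify as the crux --- is mis-specified, and as specified it fails. First, the submodules of $\eggam$ generated by $\siggam_e$ and $\siggam_{t^{-1}}$ are copies of $\crho(G)$, not of $\crho(H)$, and the discrepancy between $\psi(T^B_t)$ and $\psi'(T^B_t)$ is not an internal swap of them: by the computation in Lemma \ref{hnn-quasi}, $\siggam_{gt^{-1}}\psi'(T^B_t)=\rho(g)\otimes\sigb_e$ lands in the \emph{other} summand $\crho(G)\otimesh\eb$. The paper's first involution $v$ accordingly exchanges $\siggam_g$ with $\rho(g)\otimes\sigb_e$, straddling the two summands; and even then $\phi(T^B_t)(1\otimes v)$ agrees with $\phi''(T^B_t)$ only off the submodule generated by $T^B_e\otimes\siggam_e$, so a \emph{second} involution $w$, exchanging $T^B_g\otimes\siggam_t$ with $T^B_{gt}\otimes\siggam_e$, is needed --- and $w$ exists only after tensoring with $\crho(B)$, because $T^B_{gt}$ with $gt\notin G$ cannot be moved across the $\crho(G)$-balanced tensor product. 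Second, the deformation is the one-sided product $\Phi_\tim(T^B_t)=\phi(T^B_t)(1\otimes v_\tim)w_\tim$, with $\Phi_\tim(T^B_{t^{-1}})=\Phi_\tim(T^B_t)^*$, not a conjugation: conjugating $\phi(T^B_t)$ by your $v$ sends $y\otimes\siggam_g$ to $y\otimes\siggam_{gt^{-1}}$, whereas $\phi''(T^B_t)$ sends it to $yT^B_{gt}\otimes\siggam_e$, so your endpoint $\Phi_1$ does not equal $\phi''$ and your homotopy does not terminate at the quasi-homomorphism inducing the identity. Finally, the commutation you defer as ``the real work'' genuinely fails for your $v$: right multiplication by $h\in H$ sends $\siggam_{gt^{-1}}$ to $\siggam_{gt^{-1}h}$, and $Gt^{-1}h=Gt^{-1}$ only when $h\in\theta(H)$, so the submodule generated by $\siggam_{t^{-1}}$ is not even invariant under $\phi(T^B_h)$ and your $v$ cannot commute with it. In the paper it is the operators $1\otimes v$ and $w$ described above that commute with $\phi(T^B_h)$ for all $h\in H$; this is what yields the relation $\Phi_\tim(T^B_k)\Phi_\tim(T^B_t)=\Phi_\tim(T^B_t)\Phi_\tim(T^B_h)$ for $k=\theta(h)$, and with it the hypotheses of Proposition \ref{representations} (relations staying in $B$ map to $1$, relations crossing out of $B$ map to $1-p$ for the homotopied rank-one projection $p$). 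To repair your argument you would need to replace your single conjugation by this two-involution, one-sided construction.
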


\begin{proof}
We prove this by showing that the composition
$$K_*(\crho(B))\to K_*(\crho(G))\to K_*(\crho(B))$$
is the identity. Here we are identifying $K_*(\crho(G))$ with the $K$-theory of the algebra of compact operators on $\ehnn$, and similarly, $K_*(\crho(B))$ is identified with the $K$-theory of the compact operators on a $\crho(B)$-Hilbert module. Specifically, the composition is given by the quasi-homomorphism $\phi=1\otimes \psi,\phi'=1\otimes \psi'$ on
$$\crho(B)\dispotimesg (\dispehnn)\cong (\crho(B)\dispotimesh \eb)\oplus (\crho(B)\dispotimesg \eggam),$$
and the algebra of compacts on this module has the same $K$-theory as $\crho(B)$.

To distinguish elements of $\eb$ from elements of $\eggam$ we will denote the `basis vectors' by $\sigb_w\in \eb$ and $\siggam_w\in \eggam$. For $x\in \crho(B)$, we note that $\phi'(x)$ vanishes on the Hilbert module generated by $T^B_e\otimes \siggam_e$, and we define $\phi''(x)$ to be the sum of $\phi(x)$ and the rank-one operator $\xi\mapsto \ip{\xi,T^B_e\otimes\siggam_e}(x\otimes\siggam_e)$. Clearly $(\phi'',\phi')$ defines a quasi-homomorphism inducing the canonical isomorphism on $K$-theory. We will construct a homotopy from $(\phi,\phi')$ to $(\phi'',\phi')$, thus showing that the former also induces the canonical isomorphism.

Our computation of $\psi\circ\mu-\psi'\circ\mu$ shows that $\phi$ and $\phi''$ agree on the image of $\mu$. On the other hand we showed that $\psi(T^B_t)$ and $\psi'(T^B_t)$ agree except on the submodule generated by $\siggam_e,\siggam_{t^{-1}}$, i.e.\ the closed span of $\siggam_{g},\siggam_{gt^{-1}}$, for $g\in G$. Specifically, for $g\in G$ we have $\siggam_{gt^{-1}}\psi(T^B_t)=\siggam_{g}$ and $\siggam_{g}\psi(T^B_t)=\siggam_{gt}$, while $\siggam_{gt^{-1}}\psi'(T^B_t)=\rho(g)\otimes \sigb_e$ and $\siggam_{g}\psi'(T^B_t)=0$. Let $v$ denote the involution of $\ehnn$ which interchanges $\siggam_{g}$ and $\rho(g)\otimes \sigb_e$ while leaving the complementary submodule fixed. This is a compact perturbation of the identity, and we observe that $\psi(T^B_t)v$ agrees with $\psi'(T^B_t)$ except on the submodule generated by $\siggam_e$.

Tensoring with $1$ we see that $\phi(T^B_t)(1\otimes v)$ agrees with $\phi'(T^B_t)$ and hence with $\phi''(T^B_t)$ except on the submodule generated by $T^B_e\otimes \siggam_e$. We have $(T^B_g\otimes \siggam_e)\phi(T^B_t)(1\otimes v)=T^B_g\otimes \siggam_t$, while $(T^B_g\otimes \siggam_e)\phi''(T^B_t)=T^B_{gt}\otimes \siggam_e$. We now define $w$ to be the involution exchanging $T^B_g\otimes \siggam_t$ with $T^B_{gt}\otimes \siggam_e$, and again fixing the complement. We thus have $\phi(T^B_t)(1\otimes v)w=\phi''(T^B_t)$.

We can now define the homotopy on the generators $T^B_g$ for $g\in G$ and $T^B_t,T^B_{t^{-1}}$. We take $\Phi_\tau$ to be constant on $T^B_g$ setting thie equal to $\phi(T^B_g)=\phi''(T^B_g)$. For $T^B_t$ we define
$$\Phi_\tim(T^B_t)=\phi(T^B_t)(1\otimes v_\tim)w_\tim$$
where as usual $v_\tim,w_\tim$ denote the standard homotopies $v_\tim=\frac 12 ((1+e^{i\pi \tim})+(1-e^{i\pi \tim})v)$ and $w_\tim=\frac 12 ((1+e^{i\pi \tim})+(1-e^{i\pi \tim})w)$ for $\tim\in[0,1]$. We define $\Phi_\tim(T^B_{t^{-1}})=\Phi_\tim(T^B_t)^*$. As $v_\tim,w_\tim$ are unitaries $\Phi_\tim(T^B_t)\Phi_\tim(T^B_t)^*=\phi(T^B_t)\phi(T^B_t)^*=1$, while $\Phi_\tim(T^B_g)\Phi_\tim(T^B_g)^*=\phi(T^B_g)\phi(T^B_g)^*=1$ for any $g\in G$.

We observe that $1\otimes v, w$ commute with $\phi(T^B_h)$ for $h\in H$, so for $k=\theta(h)\in K$ we have
\begin{equation}\label{kt=th}
\begin{split}
\Phi_\tim(T^B_k)\Phi_\tim(T^B_t)&=\phi(T^B_k)\phi(T^B_t)(1\otimes v_\tim)w_\tim\\&=\phi(T^B_t)\phi(T^B_h)(1\otimes v_\tim)w_\tim\\
&=\phi(T^B_t)(1\otimes v_\tim)w_\tim\phi(T^B_h)=\Phi_\tim(T^B_t)\Phi_\tim(T^B_h).
\end{split}
\end{equation}

Hence $\Phi_\tim$ extends multiplicatively to products of partial translations in $\crho(B)$ corresponding to reduced words in $\Gamma$.

As in Theorem \ref{free product theorem} we pass to the tensor product with $\eb$, that is
$$\fre=(\eb\dispotimesh \eb)\oplus (\eb\dispotimesg \eggam).$$
Using Proposition \ref{representations}, we will show that $1\otimes \Phi_\tim$ extends to a representation of $\crho(B)$ on $\fre$ and hence $\Phi_\tim$ also extends to a representation of $\crho(B)$. Following the notation of the proposition, we will write $\alpha(\syl)$ for $1\otimes\Phi_\tim(T^B_\syl)$, and we extend this to words in the usual way.

\bigskip

Let $\gamma\in \Gamma$, and define $\EH\gamma$ to be the closed submodule of $\fre$ generated by $\sigb_b\otimes \sigb_w$ where $b,w\in B$ with $bw\in H\gamma$, and by $\sigb_b\otimes \siggam_w$ where $b\in B$, $w\in \Gamma$ with $bw\in H\gamma$. We remark that each elementary tensor of this form lies in precisely one such submodule; it follows that the spaces $\EH\gamma$ are mutually orthogonal and $\fre=\bigoplus\limits_{H\backslash\Gamma}\EH\gamma$.

We now establish the conditions of Proposition \ref{representations}.

(1) It is straightforward to check that $1\otimes\phi(T^B_\syl)$ takes $\EH\gamma$ to $\EH{\gamma\syl}$ for $\syl\in \Sigma$, and $1\otimes w,1\otimes v$ preserve each $\EH\gamma$. Hence $\alpha(\syl)$ takes $\EH\gamma$ to $\EH{\gamma\syl}$.

(2) If $w$ is a relation staying in $B$ then $w$ is either a word of length at most $3$ in $G$ or a word of the form $tht^{-1}k^{-1}$ or $k^{-1}tht^{-1}$ where $k=\theta(h)$. In the first case we note that $\Phi_\tim=\phi$ on $T^B_g$ $g\in G$, hence $\Phi_\tim$ satisfies the relation, and the same holds for $\alpha$. As $\Phi_\tim(T^B_k)\Phi_\tim(T^B_t)=\Phi_\tim(T^B_t)\Phi_\tim(T^B_h)$ and $\Phi_\tim(T^B_t)$ is an isometry we have
$$\Phi_\tim(T^B_k)=\Phi_\tim(T^B_t)\Phi_\tim(T^B_h)\Phi_\tim(T^B_t)^*$$
and pre/post-multiplying by $\Phi_\tim(T^B_k)^*$ yields the relations $\alpha(k^{-1}tht^{-1})=1$ and $\alpha(tht^{-1}k^{-1})=1$.

(3) If $w$ is a relation not staying in $B$ then $w$ is a word of the form $t^{-1}k^{-1}th$ or $ht^{-1}k^{-1}t$ where $k=\theta(h)$. Taking the adjoint of Equation \ref{kt=th} and tensoring with $1$ we have $\alpha(t^{-1}k^{-1})=\alpha(h^{-1}t^{-1})$, so using the fact that $\alpha(h)$ is unitary we get
$$\alpha(ht^{-1}k^{-1}t)=\alpha(t^{-1}t).$$
The product $\phi(T^B_{t^{-1}} T^B_{t})$ is the projection onto the complement of the submodule generated by $\sigb_e\otimes\sigb_e$, so $\alpha(t^{-1}t)$ is $1-p$ where $p$ is the projection onto the submodule generated by $(\sigb_e\otimes\sigb_e)(1\otimes v_\tim)(1\otimes w_\tim)$. Similarly tensoring Equation \ref{kt=th} with $1$ we have
$$\alpha(t^{-1}kth^{-1})=\alpha(t^{-1})\alpha(kt)\alpha(h^{-1})=\alpha(t^{-1})\alpha(th)\alpha(h^{-1})=\alpha(t^{-1}t)$$
so again the relation gives us $1-p$.

(4) Since $\alpha(ht^{-1}k^{-1}t)=1-p=\alpha(t^{-1}kth^{-1})$ for all $h\in H$ and $k=\theta(h)$, we have $\alpha(ht^{-1}k^{-1}t)=\alpha(t^{-1}k^{-1}th)$. Thus we see that $1-p$ is unchanged by conjugating by $\alpha(h)=1\otimes\phi(T^B_h)$ for $h\in H$. Taking $\hat\rho$ to be the restriction of $1\otimes \phi$ to $\EH{}$ we thus have that $\hat\rho$ commutes with $p$.

Thus by Proposition \ref{representations} it follows that $1\otimes \Phi_\tim$ extends to a representation of $\crho(B)$ on $\fre$. Faithfulness of the representation of $\crho(B)$ on $\eb$ ensures that this extension is given by tensoring an extension of $\Phi_\tim$ by $1$, and in particular that $\Phi_\tim$ extends to a representation of $\crho(B)$ on $(\crho(B)\otimesh \eb)\oplus (\crho(B)\otimesg \eggam)$.

It remains to check that $(\Phi_\tim,\phi')$ defines a quasi-homomorphism for all $\tim$, and hence that the quasi-homomorphisms $(\phi'',\phi')$ and $(\phi,\phi')$ agree on $K$-theory. It is sufficient to check that $\Phi_\tim(x)-\phi(x)$ is a compact operator for the generators $x=T^B_g$ and $x=T^{B}_t$ of $\crho(B)$: given this, $\Phi_\tim(x)-\phi'(x)=(\Phi_\tim(x)-\phi(x))+(\phi(x)-\phi'(x))$ is compact as required. Since $1\otimes v,w$ are compact perturbations of the identity, the same holds for $1\otimes v_\tim,w_\tim$ for all $\tim$, so $\Phi_\tim(T^B_t)$ agrees with $\phi(T^B_t)$ modulo compact operators, while for $g\in G$ we have $\Phi_\tim(T^{B}_g)=\phi(T^{B}_g)$ by definition. This completes the proof.
\end{proof}

\begin{appendices}
\section{Universal subspaces}
\label{universal}
Let $\Gamma$ be any infinite discrete group. We will say that a subspace $X$ of $\Gamma$ is \emph{universal} if for every $r$ and every subset $F$ of $B_\Gamma(e,r)$ there exists $x\in \Gamma$ such that $X\cap B_\Gamma(x,r)=xF$. Intuitively this means that for any subspace $B$ of $\Gamma$ we can find arbitrarily large pieces of $X$ that look like $B$.

As the condition holds for every subset of $B_\Gamma(e,r)$, in particular it holds for the ball, so a universal space is deep. It is easy to see that universal spaces exist. For example for $\Gamma=\Z$ we can construct a universal space by enumerating all binary sequences $0,1,00,01,10,11,000,001,\dots$, concatenating them as $0100011011\dots$ and taking the subset of $\N$ whose characteristic function is this binary sequence.

We remarked earlier that associated to a translation operator $T^X_{g_1}\dots T^X_{g_k}$ there is a track $(g,F)$ and two operators with the same track are equal.

In the universal case two operators of this form are equal if and only if they have the same track.

\begin{lemma}
If $X$ is universal then the operators $T^X_{g,F}$ are linearly independent. In particular if $T^X_{g,F}= T^X_{g',F'}$ then $F=F'$ and $g=g'$.
\end{lemma}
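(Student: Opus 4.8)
The plan is to prove the linear independence assertion directly; the ``in particular'' clause is then the two-term case, since an equality $T^X_{g,F}=T^X_{g',F'}$ with distinct tracks would give a nontrivial dependence $T^X_{g,F}-T^X_{g',F'}=0$. So suppose $\sum_{i=1}^n c_i T^X_{a_i,F_i}=0$ with the tracks $(a_i,F_i)$ pairwise distinct, and aim to show every $c_i=0$.

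First I would separate the relation according to the value of the group element $a_i$. Using the formula for $T^X_{a,F}$ recorded before the statement, $T^X_{a_i,F_i}\delta_x=\delta_{xa_i^{-1}}$ when $xh^{-1}\in X$ for all $h\in F_i$ and is $0$ otherwise. Writing $\chi_i(x)$ for the indicator of the condition $xF_i^{-1}\subseteq X$, we get $\bigl(\sum_i c_iT^X_{a_i,F_i}\bigr)\delta_x=\sum_i c_i\chi_i(x)\,\delta_{xa_i^{-1}}$. For fixed $x$ the basis vectors $\delta_{xa_i^{-1}}$ coincide exactly when the $a_i$ agree, and whenever $\chi_i(x)=1$ the point $xa_i^{-1}$ genuinely lies in $X$ (as $a_i\in F_i$). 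Hence vanishing of the operator forces, for each group element $a$ occurring among the $a_i$ and every $x$,
\[
\sum_{i\,:\,a_i=a}c_i\,\chi_i(x)=0.
\]
Since $e\in F_i$, the indicator $\chi_i$ vanishes off $X$ automatically, so this identity holds for all $x\in\Gamma$. It therefore suffices to fix $a$ and prove linear independence of the $\chi_i$ over the indices with $a_i=a$; for these the sets $F_i$ are pairwise distinct because the tracks are.

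The next step is to convert this into a purely combinatorial statement using universality. Put $A_i=F_i^{-1}$, pairwise distinct finite subsets of $\Gamma$, and choose $r$ so large that $U=B_\Gamma(e,r)$ contains every $A_i$ (possible since a left-invariant metric satisfies $|h^{-1}|=|h|$). For a prescribed pattern $G\subseteq U$, universality supplies $x$ with $X\cap B_\Gamma(x,r)=xG$, whence $xa\in X$ if and only if $a\in G$ for every $a\in U$; consequently $\chi_i(x)=1$ precisely when $A_i\subseteq G$. Evaluating the displayed relation at such $x$ gives $\sum_{i:a_i=a}c_i\,\zeta(A_i,G)=0$ for every $G\subseteq U$, where $\zeta(A,G)=1$ if $A\subseteq G$ and $\zeta(A,G)=0$ otherwise. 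Thus the whole problem reduces to the assertion that the functions $G\mapsto\zeta(A,G)$, for distinct finite sets $A$, are linearly independent.

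I expect this last combinatorial fact to be the crux, but it is standard: the zeta matrix $\bigl(\zeta(A,G)\bigr)_{A,G\subseteq U}$ of the Boolean lattice is unitriangular with respect to inclusion, hence invertible (its inverse is the Möbius function $(-1)^{|G|-|A|}$). Concretely, if $\sum_A c_A\,\zeta(A,G)=0$ for all $G$, evaluate at $G=A_0$ for a set $A_0$ minimal among those with $c_{A_0}\neq 0$; every surviving term then has $A\subseteq A_0$ with $c_A\neq 0$, forcing $A=A_0$ by minimality, so $c_{A_0}=0$, a contradiction. Applying this with the distinct sets $A_i$ yields $c_i=0$ for all $i$ with $a_i=a$, and since $a$ was arbitrary, all coefficients vanish. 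The subtlety worth flagging is exactly that one cannot in general isolate a single track by choosing $G=A_j$, because inclusions among the $F_i$ obstruct it; this is precisely why the Möbius/zeta argument, rather than a crude separation of patterns, is required.
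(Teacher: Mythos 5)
Your proof is correct and follows essentially the same route as the paper: after splitting the dependence according to the group element, the paper also evaluates on a universal pattern $X\cap B_\Gamma(x,r)=xF_i^{-1}$ for an inclusion-minimal $F_i$, which kills every other term and isolates one coefficient. Your zeta-matrix/M\"obius packaging of the last step is just this same minimal-evaluation argument in combinatorial dress, so the two proofs coincide in substance.
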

\begin{proof}
Let $T$ be a finite linear combination $T=\sum\limits_i \lambda_iT^X_{g_i,F_i}$ with non-zero coefficients. Then clearly for each $g\in \Gamma$ the sum $\sum\limits_{i,g_i=g} \lambda_iT^X_{g_i,F_i}$ is zero, so without loss of generality we may assume that $g_i=g$ for all $i$.

Choose an $i$ such that $F_i$ is minimal with respect to inclusion, and take $r$ sufficiently large that every $F_j$ lies in $B_\Gamma(e,r)$. By the universal hypothesis there is an $x\in \Gamma$ such that $X\cap B_\Gamma(x,r)=xF_i^{-1}=\{xh^{-1} \mid h\in F_i\}$. Then $T^X_{g,F_i}\delta_x=\delta_{xg^{-1}}$. As $F_i$ is minimal, for each $j\neq i$ there is some $h\in F_j\setminus F_i$, so $xh^{-1}$ is not in $X$, and $T^X_{g,F_j}\delta_x=0$. It follows that $T\delta_x=\lambda_i\delta_{xg^{-1}}$. Hence no such $T$ is zero, and we conclude that the operators $T^X_{g,F}$ are linearly independent.
\end{proof}

Now let $B$ be any subspace of $\Gamma$. We can define a map $\phi$ from the dense subalgebra of $\crho X$ spanned by operators $T^X_{g,F}$ to the dense subalgebra of $\crho B$ spanned by operators $T^B_{g,F}$ as follows. We define $\phi(T^X_{g,F})=T^B_{g,F}$; this is well defined as if $T^X_{g,F}= T^X_{g',F'}$ then $(g,F)=(g',F')$ so $T^B_{g,F}= T^B_{g',F'}$. Moreover as the operators $T^X_{g,F}$ are linearly independent we can extend by linearity to the whole dense subalgebra. It is easy to see that $\phi$ is a homomorphism of $*$-algebras.

The following proposition tells us that this extends to the completed algebras, hence establishing a universal property for the algebra $\crho X$.

\begin{proposition}
If $X$ is a universal subspace, and $B$ is any subspace of $\Gamma$, then there is a unique $*$-homomorphism $\phi:\crho X\to \crho B$ extending the map $T^X_{g,F}\mapsto T^B_{g,F}$.
\end{proposition}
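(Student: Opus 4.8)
The plan is to reduce everything to showing that the $*$-homomorphism $\phi$, already known to be well defined on the translation ring (by the preceding lemma and linear independence of the $T^X_{g,F}$), is norm-decreasing. Once we know $\norm{\phi(a)}\leq\norm{a}$ for every $a$ in the dense subalgebra of $\crho X$, the map $\phi$ extends by continuity to the completion $\crho X$; the extension is automatically a $*$-homomorphism, since multiplication, addition and the involution are norm-continuous, and it is unique because two continuous maps agreeing on a dense subalgebra coincide. Thus the entire content of the proposition is the contractivity estimate.

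To establish this I would fix a finite linear combination $a=\sum_i\lambda_i T^X_{g_i,F_i}$, so that $\phi(a)=\sum_i\lambda_i T^B_{g_i,F_i}$, and compare $\norm{\phi(a)}$ on $\ell^2(B)$ with $\norm{a}$ on $\ell^2(X)$. The key point is that whether a track operator $T^B_{g,F}$ acts nontrivially on a basis vector $\delta_b$ depends only on the intersection of $B$ with a ball around $b$, namely on whether $bh^{-1}\in B$ for each $h\in F$. This locality is exactly what lets universality of $X$ enter: it guarantees that $X$ contains an exact copy of an arbitrarily large piece of $B$, so the operator $\phi(a)$ can be seen faithfully inside $a$.

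Concretely, take a finitely supported unit vector $\xi\in\ell^2(B)$ with support $S$, and let $S'$ be the finite set of those image points $bg_i^{-1}$ ($b\in S$, $1\leq i\leq n$) that lie in $B$, so that both $\xi$ and $\phi(a)\xi$ are supported in the finite set $\hat S=S\cup S'\subseteq B$. Fixing a basepoint $s_0\in S$ and a radius $R$ large enough that $S$, all the points $bh^{-1}$ with $b\in S$ and $h\in F_i$, and all of $S'$ lie in $B_\Gamma(s_0,R)$, I would apply the universal hypothesis to the subset $F=s_0^{-1}(B\cap B_\Gamma(s_0,R))$ of $B_\Gamma(e,R)$. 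This produces $x\in\Gamma$ with $X\cap B_\Gamma(x,R)=xF$; setting $\gamma=xs_0^{-1}$ and using left-invariance gives $X\cap \gamma B_\Gamma(s_0,R)=\gamma(B\cap B_\Gamma(s_0,R))$, so that inside this ball $X$ agrees exactly with the translate $\gamma B$. Since $\hat S\subseteq B\cap B_\Gamma(s_0,R)$, the assignment $V\delta_b=\delta_{\gamma b}$ defines an isometry $V\colon\ell^2(\hat S)\to\ell^2(X)$.

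The heart of the argument is the verification that $V$ intertwines the two operators. For $b\in S$ and each $i$, the nonvanishing condition for $T^X_{g_i,F_i}$ on $\delta_{\gamma b}$, namely $\gamma(bh^{-1})\in X$ for all $h\in F_i$, coincides by the local matching with $bh^{-1}\in B$ for all $h\in F_i$, which is the nonvanishing condition for $T^B_{g_i,F_i}$ on $\delta_b$; and when these hold the image $\delta_{\gamma(bg_i^{-1})}$ equals $V\delta_{bg_i^{-1}}$ with $bg_i^{-1}\in S'$. Hence $T^X_{g_i,F_i}\delta_{\gamma b}=V(T^B_{g_i,F_i}\delta_b)$ in every case, and summing gives $a(V\xi)=V(\phi(a)\xi)$. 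Therefore $\norm{\phi(a)\xi}=\norm{V\phi(a)\xi}=\norm{aV\xi}\leq\norm{a}\,\norm{V\xi}=\norm{a}\,\norm{\xi}$, and taking the supremum over all such $\xi$ yields $\norm{\phi(a)}\leq\norm{a}$. I expect the main obstacle to be precisely the bookkeeping here: choosing $R$ so that every point relevant to the domain and range conditions of the finitely many operators $T^B_{g_i,F_i}$ lies inside the matched ball, and then checking that the agreement of $X$ with $\gamma B$ transports the support conditions faithfully in both directions.
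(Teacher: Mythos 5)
Your proposal is correct and follows essentially the same route as the paper's own proof: both reduce the proposition to contractivity of $\phi$ on the span of the operators $T^X_{g,F}$, and establish it by applying universality to $B\cap B_\Gamma(\,\cdot\,,R)$ so that a left translation carries the finitely supported vectors (and, by matching the track conditions $bh^{-1}\in B \Leftrightarrow \gamma bh^{-1}\in X$, the operators) from $\ell^2(B)$ into $\ell^2(X)$, giving $\norm{\phi(a)}\leq\norm{a}$. The only cosmetic differences are that you centre the matched ball at $s_0\in S$ rather than at $e$ and bound $\norm{\phi(a)\xi}$ for an arbitrary unit vector via an explicit intertwining isometry $V$, where the paper transports a single near-norming vector for $\phi(a)$.
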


\begin{proof}
Given the above remarks, it suffices to show that the map on dense subalgebras is contractive. Let $T^X$ be a finite sum $\sum_i \lambda_i T^X_{g_i,F_i}$ and let $T^B=\phi(T^X)=\sum_i \lambda_i T^B_{g_i,F_i}$. For any $\varepsilon>0$ there exists a finitely supported vector $\xi=\sum \xi_j \delta_{y_j}$ of norm $1$, such that $\norm{T^B\xi}>\norm{T^B}-\varepsilon$.

Take $r$ sufficiently large that $B_\Gamma(e,r)$ contains $y_jF_i^{-1}$ for all $i,j$. By the universal property there exists $x\in \Gamma$ such that $X\cap B_\Gamma(x,r)=x(B\cap B_\Gamma(e,r))$. It follows that for $h\in F_i$ we have $y_jh^{-1}\in B$ if and only if $xy_jh^{-1}\in X$. Thus for $\zeta=\sum \xi_j \delta_{xy_j}$, the vector $T^X\zeta$ is the left translation by $x$ of $T^B\xi$. In particular $\norm{T^X\zeta}=\norm{T^B\xi}> \norm{T^B}-\varepsilon$, hence $\norm{T^B}\leq \norm{T^X}$ as required.
\end{proof}

It follows immediately from this universal property that if $X,X'$ are two universal spaces then the correspondence $T^X_{g,F}\leftrightarrow T^{X'}_{g,F}$ yields an isomorphism $\crho X\cong \crho X'$.

We can use the idea of universal spaces to illustrate the importance of the co-separability hypothesis in our definition of relative almost invariance in Definition \ref{relative-H-almost-invariant}. 
\begin{example}\label{example_coseparability}
Let $\Gamma = F_2$ be the free group with generators $a, b$. Let $U$ be a universal subspace for $\Gamma$ chosen to be in the set of reduced words beginning with $b$. 
Now take $X= \langle a\rangle U$ and take $B$ be the subset of $X$ given by $B = \{a^nu \mid n\geq 0, \; u\in U\}$. Then $B$ is relatively deep in $X$, hence we have 
an exact sequence
\[
0\rightarrow I(P,B)\longrightarrow \crho (B) \longrightarrow \crho(X)\rightarrow 0. 
\]
Moreover, the pair $B$ and $X$ satisfies the first condition of Definition \ref{relative-H-almost-invariant} because $B$ is the intersection of $X$ with a half space of the Cayley graph of $F_2$. On the other hand, both and $B$ and $X$ are universal spaces for $\Gamma$. Hence the map $\crho(B) \rightarrow \crho (X)$ is an isomorphism so $I(P,B)$ is trivial. Thus Theorem \ref{ai-ideal} implies that $B$ is not co-separable. 
\end{example}

\end{appendices}

\bibliographystyle{plain}

\end{document}